\documentclass[10pt,a4paper]{amsart}
\usepackage{amsmath,amsfonts,amsthm,amssymb,bm,graphicx,mathtools}
\usepackage[stretch=10]{microtype}
\usepackage[unicode]{hyperref}
\usepackage{xcolor}
\definecolor{dark-red}{rgb}{0.4,0.15,0.15}
\definecolor{dark-blue}{rgb}{0.15,0.15,0.4}
\definecolor{medium-blue}{rgb}{0,0,0.5}
\hypersetup{
	pdftitle={Spectral Multiplicity for Maa\ss{} Newforms of Non-Squarefree Level},
	pdfauthor={Peter Humphries},
	pdfnewwindow=true,   
	colorlinks, linkcolor={dark-red},
	citecolor={dark-blue}, urlcolor={medium-blue}
}
\makeatletter
\newcommand*{\defeq}{\mathrel{\rlap{%
                     \raisebox{0.3ex}{$\m@th\cdot$}}%
                     \raisebox{-0.3ex}{$\m@th\cdot$}}%
                     =}
\makeatother
\makeatletter
\newcommand*{\eqdef}{=\mathrel{\rlap{%
                     \raisebox{0.3ex}{$\m@th\cdot$}}%
                     \raisebox{-0.3ex}{$\m@th\cdot$}}%
                     }
\makeatother
\newcommand\A{\mathbb{A}}
\newcommand\Ac{\mathcal{A}}
\newcommand\BB{\mathcal{B}}
\newcommand\Cb{\mathbb{C}}
\newcommand\DD{\mathcal{D}}
\newcommand\dd{\mathfrak{d}}
\newcommand\dee{\partial}
\newcommand\ds{\mathrm{ds}}

\newcommand\Ff{\mathfrak{F}}
\newcommand\ff{\mathfrak{f}}
\newcommand\gl{\mathfrak{gl}}
\newcommand\Hb{\mathbb{H}}
\newcommand\kbf{\boldsymbol{k}}
\newcommand\N{\mathbb{N}}

\newcommand\OO{\mathcal{O}}
\newcommand\pp{\mathfrak{p}}
\newcommand\Pp{\mathfrak{P}}
\newcommand\ps{\mathrm{ps}}
\newcommand\Q{\mathbb{Q}}
\newcommand\qq{\mathfrak{q}}
\newcommand\Quad{\mathrm{quad}}
\newcommand\R{\mathbb{R}}
\newcommand\rr{\mathfrak{r}}
\newcommand\sbf{\boldsymbol{s}}
\newcommand\Sc{\mathcal{S}}
\newcommand\St{\mathrm{St}}
\newcommand\Tbf{\boldsymbol{T}}

\newcommand\Xx{\mathfrak{X}}
\newcommand\Z{\mathbb{Z}}
\DeclareMathOperator{\cond}{cond}
\DeclareMathOperator{\Gal}{Gal}
\DeclareMathOperator{\GL}{GL}
\DeclareMathOperator{\mon}{mon}
\DeclareMathOperator{\nonmon}{nonmon}
\DeclareMathOperator{\PGL}{PGL}
\DeclareMathOperator{\PSL}{PSL}
\DeclareMathOperator{\sgn}{sgn}
\DeclareMathOperator{\SL}{SL}

\DeclareMathOperator{\vol}{vol}
\numberwithin{equation}{section}
\newtheorem{theorem}[equation]{Theorem}
\newtheorem{conjecture}[equation]{Conjecture}

\newtheorem{lemma}[equation]{Lemma}
\newtheorem{proposition}[equation]{Proposition}
\newtheorem*{question}{Question}
\theoremstyle{remark}

\begin{document}

\title{Spectral Multiplicity for Maa\ss{} Newforms of Non-Squarefree Level}

\author{Peter Humphries}


\address{Department of Mathematics, University College London, Gower Street, London WC1E 6BT}

\email{\href{mailto:pclhumphries@gmail.com}{pclhumphries@gmail.com}}

\keywords{spectrum, Laplacian, multiplicity, Maa\ss{} form}

\subjclass[2010]{11F12 (primary); 11F72 (secondary).}

\begin{abstract}
We show that if a positive integer $q$ has $s(q)$ odd prime divisors $p$ for which $p^2$ divides $q$, then a positive proportion of the Laplacian eigenvalues of Maa\ss{} newforms of weight $0$, level $q$, and principal character occur with multiplicity at least $2^{s(q)}$. Consequently, the new part of the cuspidal spectrum of the Laplacian on $\Gamma_0(q) \backslash \Hb$ cannot be simple for any odd non-squarefree integer $q$. This generalises work of Str\"{o}mberg, who proved this for $q = 9$ by different methods.
\end{abstract}

\maketitle

\section{Introduction}

\subsection{Main Results}

Given a positive integer $q$, we denote by $\Gamma_0(q)$ the congruence subgroup of $\SL_2(\Z)$ consisting of matrices whose lower left entry is divisible by $q$. The cuspidal spectrum of the Laplacian
\[\Delta = -y^2 \left(\frac{\dee^2}{\dee x^2} + \frac{\dee^2}{\dee y^2}\right)\]
on $\Gamma_0(q) \backslash \Hb$ is the set of eigenvalues of the Laplacian acting on Maa\ss{} cusp forms on $\Gamma_0(q) \backslash \Hb$ of weight $0$ and character $\chi_{0(q)}$, the principal Dirichlet character modulo $q$. It is known that the multiplicity of an eigenvalue $\lambda$ of the Laplacian on $\Gamma_0(q) \backslash \Hb$ is at most $C_q \sqrt{\lambda} / \log \lambda$ for some constant $C_q$ dependent only on $q$; cf.\ \cite[\S 4]{Sarnak}.

For $q = 1$, Cartier \cite{Cartier} conjectured that the cuspidal spectrum is simple, based on limited numerical evidence; more numerical calculations of Steil \cite{Steil} gave further support to this conjecture. For $q > 1$, the cuspidal spectrum is not simple due to newforms and oldforms having the same eigenvalue. Nevertheless, one may well ask whether this is the only cause of spectral multiplicity, so that the so-called new part of the cuspidal spectrum consisting of eigenvalues of the Laplacian associated to newforms is simple.

\begin{question}
Is the new part of the cuspidal spectrum of the Laplacian on $\Gamma_0(q) \backslash \Hb$ simple?
\end{question}

This folklore conjecture is certainly expected to be true for squarefree $q$; see, for example, the work of Bolte and Johansson \cite{Bolte1,Bolte2} and Str\"{o}mbergsson \cite{Strombergsson} on the spectral correspondence between the new part of the cuspidal spectrum of the Laplacian on $\Gamma_0(q) \backslash \Hb$ and eigenvalues of the automorphic Laplacian for the group of units of norm one in a maximal order in an indefinite quaternion division algebra over $\Q$, and in particular the discussion \cite[p.\ 1154]{Bolte2}. Furthermore, the $L$-functions and modular forms database \cite{LMFDB} contains a list of numerical calculations of small eigenvalues of the Laplacian of Maa\ss{} newforms on $\Gamma_0(q) \backslash \Hb$ for low values of $q$, and no eigenvalues of multiplicity greater than one are listed whenever $q$ is squarefree.

On the other hand, it is known by purely representation-theoretic means that the cuspidal spectrum of the Laplacian on $\Gamma(p) \backslash \Hb$, where $\Gamma(p)$ is the principal congruence subgroup modulo an odd prime $p$, contains infinitely many eigenvalues with multiplicity at least $\left(p + (-1)^{(p - 1)/2}\right) / 2$ \cite{Randol}; we give another explanation of this case of spectral multiplicity in \hyperref[TwistsGamma1(q)Gamma(q)sect]{Section \ref*{TwistsGamma1(q)Gamma(q)sect}}, and in particular show that the part of the spectrum of the Laplacian on $\Gamma(p) \backslash \Hb$ corresponding to newforms contains eigenvalues occurring with multiplicity at least $(p - 3)/2$.

In this article, we show that the new part of the cuspidal spectrum of the Laplacian on $\Gamma_0(q) \backslash \Hb$ is never simple when $q$ has an odd prime divisor $p$ for which $p^2$ divides $q$. More precisely, we prove the following.

\begin{theorem}\label{introthm}
Let $q$ be a positive integer, and let $s(q)$ denote the number of distinct odd primes $p$ for which $p^2$ divides $q$. Then a set of eigenvalues of the new part of the cuspidal spectrum of the Laplacian on $\Gamma_0(q) \backslash \Hb$ with density
\[\prod_{p^2 \parallel q} \left(1 - \frac{p}{p^2 - p - 1}\right)\]
has multiplicity at least $2^{s(q)}$.
\end{theorem}

In particular, the new part of the cuspidal spectrum of the Laplacian on $\Gamma_0(q) \backslash \Hb$ will contain eigenvalues with high multiplicity if $q$ is highly composite and non-squarefree.

\subsection{Method of Proof}

Denote by $\Ac_0(q,\chi)$ the vector space of Maa\ss{} cusp forms of weight $0$, level $q$, and character $\chi$, and by $\Ac_0(q,\chi;\lambda)$ the subspace of $\Ac_0(q,\chi)$ consisting of Maa\ss{} cusp forms with Laplacian eigenvalue $\lambda = 1/4 + t^2 > 0$, where the spectral parameter $it$ is in $i\R \cup (-1/2,1/2)$. In each case, we let $\Ac_0^{\ast}(q,\chi)$ and $\Ac_0^{\ast}(q,\chi;\lambda)$ denote the subspaces spanned by newforms of level $q$, and by $\BB_0^{\ast}(q,\chi)$ and $\BB_0^{\ast}(q,\chi;\lambda)$ the bases of these spaces consisting of Hecke-normalised newforms, so that the Fourier coefficients $\rho_f(n)$ of a newform with $n \geq 1$ are precisely its Hecke eigenvalues $\lambda_f(n)$ of the $n$-th Hecke operator
\[(T_n f)(z) \defeq \frac{1}{\sqrt{n}} \chi(a) \sum_{ad = n} \sum_{n \hspace{-.25cm} \pmod{d}} g\left(\frac{az + b}{d}\right).\]
When $\chi = \chi_{0(q)}$ is the principal character modulo $q$, we simply write $\Ac_0^{\ast}\left(\Gamma_0(q)\right)$, $\Ac_0^{\ast}\left(\Gamma_0(q);\lambda\right)$, $\BB_0^{\ast}\left(\Gamma_0(q)\right)$, and $\BB_0^{\ast}\left(\Gamma_0(q);\lambda\right)$.

Given a primitive Dirichlet character $\psi$ modulo $r$ and a Maa\ss{} newform $f \in \BB_0^{\ast}(q,\chi)$ with Hecke eigenvalues $\lambda_f(p)$ at each prime $p$, the twist of $f$ by $\psi$ is the \emph{newform} $f \otimes \psi$ with Hecke eigenvalues equal to $\psi(p) \lambda_f(p)$ for every prime $p$ not dividing $rq$; this Maa\ss{} newform is of weight $0$, level dividing $r^2 q$, and character induced by the same primitive character as $\psi^2 \chi$. Crucially, twisting by a character leaves the Laplacian eigenvalue of a Maa\ss{} form unchanged.

The idea of proof of \hyperref[introthm]{Theorem \ref*{introthm}} is to show the existence of Maa\ss{} newforms that are level- and character-invariant under twisting by certain quadratic Dirichlet characters, and then to show that these twists are often distinct from the original newform; this then forces spectral multiplicity. Indeed, the numerical calculations in \cite{LMFDB} show that for each $q \in \{9,16,25,32,36,49,81,100\}$, there exist pairs of Maa\ss{} newforms $f,g \in \BB_0^{\ast}\left(\Gamma_0(q)\right)$ with identical Laplacian eigenvalue that are related by twisting; note that the numerical calculations do not include $q$ equal to $64$ or any nonprime greater than $100$, while for $q$ equal to $4$ or $8$, it is unclear whether eigenvalues occur with multiplicity greater than one.

The first result we prove is the following characterisation of Maa\ss{} newforms of principal character that are level-invariant under twisting by a quadratic character.

\begin{theorem}\label{twistthmclassical}
Let $q$ be a positive integer, and let $p$ be an odd prime dividing $q$. Let $\chi_p$ denote the unique quadratic Dirichlet character modulo $p$. Then if $p \parallel q$,
\[\BB_0^{\ast}\left(\Gamma_0(q);\lambda\right) \cap \left(\BB_0^{\ast}\left(\Gamma_0(q);\lambda\right) \otimes \chi_p\right) = \emptyset.\]
If $p^2 \parallel q$,
\begin{multline*}
\BB_0^{\ast}\left(\Gamma_0(q);\lambda\right) \cap \left(\BB_0^{\ast}\left(\Gamma_0(q);\lambda\right) \otimes \chi_p\right)	\\
= \BB_0^{\ast}\left(\Gamma_0(q);\lambda\right) \setminus \left( \left(\BB_0^{\ast}\left(\Gamma_0\left(\frac{q}{p}\right);\lambda\right) \otimes \chi_p\right) \cup \left(\BB_0^{\ast}\left(\Gamma_0\left(\frac{q}{p^2}\right);\lambda\right) \otimes \chi_p\right)\right).
\end{multline*}
If $p^3 \mid q$,
\[\BB_0^{\ast}\left(\Gamma_0(q);\lambda\right) \cap \left(\BB_0^{\ast}\left(\Gamma_0(q);\lambda\right) \otimes \chi_p\right) = \BB_0^{\ast}\left(\Gamma_0(q);\lambda\right).\]
\end{theorem}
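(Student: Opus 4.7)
The plan is to pass to automorphic representations and reduce the problem to a local computation at the prime $p$. Each newform $\varphi \in \MM^{\new}\left(\Gamma_0(q),\lambda\right)$ determines a cuspidal automorphic representation $\pi = \bigotimes_v \pi_v$ of $\GL_2(\A_\Q)$ with trivial central character and arithmetic conductor $q$, and twisting by $\e_{\Quad(p)}$ corresponds to replacing $\pi_p$ by $\pi_p \otimes \chi_p$, where $\chi_p$ is the local component at $p$ of the idele class character attached to $\e_{\Quad(p)}$, which is a quadratic character of $\Q_p^\times$ of conductor $p$. Writing $m \defeq v_p(q)$, the requirement that both $\varphi$ and $\varphi \otimes \e_{\Quad(p)}$ lie in $\MM^{\new}\left(\Gamma_0(q),\lambda\right)$ reduces to the single local equality $c(\pi_p \otimes \chi_p) = c(\pi_p) = p^m$, since $\chi_p^2 = 1$ preserves central characters and conductors at other primes are unchanged by the twist.

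The key input is the standard conductor-of-twist formula on $\GL_2$: for $\pi_p$ irreducible admissible and $\chi$ a ramified character of $\Q_p^\times$ with $c(\pi_p) \neq 2c(\chi)$, one has $c(\pi_p \otimes \chi) = \max(c(\pi_p), 2c(\chi))$, while in the boundary case $c(\pi_p) = 2c(\chi)$ the twist conductor is at most $p^{2c(\chi)}$ but can be strictly smaller. Specialising to $\chi = \chi_p$ (so $c(\chi_p) = 1$) disposes of the extremal cases of the theorem immediately: if $m = 1$ then $c(\pi_p \otimes \chi_p) = p^2 \neq p$, so no newform survives the twist and the intersection is $\{0\}$; if $m \geq 3$ then $m > 2 = 2c(\chi_p)$ forces $c(\pi_p \otimes \chi_p) = p^m$ automatically, and every newform lies in the intersection.

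The remaining case $m = 2$ is the boundary case, in which $c(\pi_p \otimes \chi_p) \in \{1, p, p^2\}$. I would partition $\MM^{\new}\left(\Gamma_0(q),\lambda\right)$ according to this value. Newforms for which the twist has conductor $1$ at $p$ are precisely those whose local component $\pi_p$ is a $\chi_p$-twist of an unramified representation, and hence are exactly the elements of $\MM^{\new}\left(\Gamma_0(qp^{-2}),\lambda\right) \otimes \e_{\Quad(p)}$. Newforms for which the twist has conductor $p$ at $p$ are those whose $\pi_p$ is a $\chi_p$-twist of a Steinberg-type representation of conductor $p$, constituting $\MM^{\new}\left(\Gamma_0(qp^{-1}),\lambda\right) \otimes \e_{\Quad(p)}$. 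The remaining newforms, for which the twist still has conductor $p^2$, form the desired intersection. Pairwise orthogonality of the three subspaces follows from strong multiplicity one, since the three families of local representations at $p$ are pairwise non-isomorphic and hence give rise to distinct global automorphic representations.

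The main obstacle is establishing the boundary case of the conductor-of-twist formula and verifying the classical-to-local dictionary: one must check that the twisting map $\varphi \mapsto \varphi \otimes \e_{\Quad(p)}$ sends $\MM^{\new}\left(\Gamma_0(qp^{-i}),\lambda\right)$ injectively into $\MM^{\new}\left(\Gamma_0(q),\lambda\right)$ as a new subspace with principal character, for $i = 1, 2$. This amounts to applying the conductor formula in the reverse direction (from $m' \in \{0,1\}$ up to $m = 2$), together with the observation that $\chi_p^2 = 1$ makes twisting an involution on the union of the three subspaces, so that the decomposition is exhaustive.
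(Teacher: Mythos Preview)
Your strategy is the same as the paper's: pass to the associated cuspidal automorphic representation, reduce to the single local condition $c(\pi_p\otimes\chi_p)=c(\pi_p)$ at the prime $p$, and then read off the three cases $m=1$, $m=2$, $m\ge 3$; the decomposition in the $m=2$ case and the orthogonality argument via multiplicity one also match the paper exactly.

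The one place where your write-up is thinner than the paper is the ``standard conductor-of-twist formula'' you invoke. The inequality $c(\pi_p\otimes\chi)\le\max\{c(\pi_p),2c(\chi)\}$ (for trivial central character) is indeed standard and is the paper's Lemma~\ref{twistcondlemma}. Combined with the involution $\chi_p^2=1$, this immediately gives equality when $c(\pi_p)>2c(\chi_p)$, so your $m\ge 3$ argument is complete as stated. But the equality in the other non-boundary direction, $c(\pi_p)<2c(\chi_p)$, is \emph{not} a formal consequence of the upper bound plus the involution: one has to know that every trivial-central-character representation of conductor exponent $0$ or $1$ really does acquire conductor exponent $2$ after twisting by $\chi_p$. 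The paper handles this by classifying such representations (unramified principal series or unramified twist of Steinberg; Lemmata~\ref{specialtwist} and~\ref{specialpv}) and computing directly. Your $m=1$ case and the identification of the two ``exceptional'' subspaces in the $m=2$ case both implicitly rely on this classification, so you should either cite it or carry it out. (As stated without the trivial-central-character hypothesis, the formula is actually false: a principal series $\omega_1\boxplus\omega_2$ with $c(\omega_1)=c(\chi)$ large, $\omega_1\chi$ unramified, and $\omega_2$ unramified gives a counterexample.) One small notational slip: you write $c(\pi_p)=p^m$ but then treat $c$ additively; keep $c$ as the conductor \emph{exponent} throughout.
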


Here by $\BB_0^{\ast}\left(\Gamma_0(q);\lambda\right) \otimes \chi_p$, we mean the set $\{f \otimes \chi_p : f \in \BB_0^{\ast}\left(\Gamma_0(q);\lambda\right)\}$.

\hyperref[twistthmclassical]{Theorem \ref*{twistthmclassical}} is proved in \hyperref[prooftwistthmsect]{Section \ref*{prooftwistthmsect}}, and then generalised to allow twists by the unique primitive quadratic Dirichlet character $\chi_r$ modulo $r$ for any odd squarefree divisor $r$ of $q$. We first transport this problem to the ad\`{e}lic setting, for then one can resolve this locally by studying generic irreducible admissible representations of $\GL_2(\Q_p)$. Moreover, we can generalise \hyperref[twistthmclassical]{Theorem \ref*{twistthmclassical}} to cuspidal automorphic forms on $\GL_2$ over any number field at one fell swoop; see \hyperref[twistthmadelic]{Theorem \ref*{twistthmadelic}}. By the well-known correspondence between newforms and automorphic representations (see, for example, \cite[Theorem 13.8.9]{Goldfeld}), it suffices to characterise the cuspidal automorphic representations $\pi$ of $\GL_2(\A_{\Q})$ whose archimedean component is a weight $0$ principal series representation such that $\pi$ is conductor-invariant under twisting by a quadratic Hecke character of $\Q^{\times} \backslash \A_{\Q}^{\times}$. We cover in \hyperref[backgroundsect]{Section \ref*{backgroundsect}} the relationship between automorphic representations and representations of local fields and follow this with a classification of the behaviour of such local representations under twisting by quadratic characters.

Via the Weyl law for $\Gamma_0(q) \backslash \Hb$, \hyperref[twistthmclassical]{Theorem \ref*{twistthmclassical}} shows that a positive proportion of Maa\ss{} newforms of non-squarefree level $q$ and principal character are level-invariant under twisting by a quadratic Dirichlet character. This, however, does not force spectral multiplicity for $\Gamma_0(q) \backslash \Hb$: we must eliminate the possibility that $f \otimes \chi_p = f$ for every newform $f \in \BB_0^{\ast}\left(\Gamma_0(q);\lambda\right) \cap \left(\BB_0^{\ast}\left(\Gamma_0(q);\lambda\right) \otimes \chi_p\right)$. Maa\ss{} newforms that satisfy $f \otimes \chi = f$ for some Dirichlet character $\chi$ are said to be monomial or dihedral or of CM-type, though the latter nomenclature is somewhat incongruous due to the fact that these newforms do not arise from CM-fields, while the former nomenclature stems from the fact that the cuspidal automorphic representation associated to $f$ is the automorphic induction of a character. In \hyperref[monomialsect]{Section \ref*{monomialsect}}, we give an upper bound for the number of monomial newforms on $\Gamma_0(q) \backslash \Hb$ with Laplacian eigenvalue at most $T^2$ with $T$ tending to infinity.

\hyperref[Weyllawsect]{Section \ref*{Weyllawsect}} deals with the Weyl law for Maa\ss{} newforms. We show that the monomial newforms have density zero in the set of all newforms of level $q$ and principal character. This allows us to count the number of Maa\ss{} newforms of level $q$ and principal character whose twist by a quadratic Dirichlet character $\chi_r$ is a different Maa\ss{} newform of the same level and character; we denote by $\BB_0^{\ast}\left(\Gamma_0(q)\right)_{\nonmon(\chi_r)}$ the subset of $\BB_0^{\ast}\left(\Gamma_0(q)\right)$ of such newforms.

\begin{theorem}\label{Qratiothm}
Let $q$ and $r$ be positive integers with $r > 1$ odd and squarefree. Let $\chi_r$ denote the unique primitive quadratic Dirichlet character modulo $r$. Then if $r^2$ does not divide $q$,
\[\BB_0^{\ast}\left(\Gamma_0(q)\right)_{\nonmon(\chi_r)} = \emptyset,\]
whereas if $r^2$ divides $q$, we have that
\[\frac{\# \left\{f \in \BB_0^{\ast}\left(\Gamma_0(q)\right)_{\nonmon(\chi_r)} : \lambda_f \leq T^2\right\}}{\#\left\{f \in \BB_0^{\ast}\left(\Gamma_0(q)\right) : \lambda_f \leq T^2\right\}} = \prod_{\substack{p \mid r \\ p^2 \parallel q}} \left(1 - \frac{p}{p^2 - p - 1}\right) + o_q(1)\]
as $T$ tends to infinity, where the error term depends only on $q$.

Moreover, the same holds if we replace $\BB_0^{\ast}\left(\Gamma_0(q)\right)_{\nonmon(\chi_r)}$ by
\[\bigcap_{\substack{r' \mid r \\ r' > 1}} \BB_0^{\ast}\left(\Gamma_0(q)\right)_{\nonmon(\chi_{r'})}.\]
\end{theorem}

Again, we prove this result in more generality; see \hyperref[Fratiothm]{Theorem \ref*{Fratiothm}}. The key tool in this proof is the explicit Weyl law for $\GL_2$ over any number field due to Palm \cite{Palm}.

As $f$ and $f \otimes \chi_r$ have the same Laplacian eigenvalue $\lambda_f$, this means that every Laplacian eigenvalue corresponding to a newform in $\BB_0^{\ast}\left(\Gamma_0(q)\right)_{\nonmon(\chi_r)}$ has multiplicity at least $2$. In particular, when $q = p^3$ is the cube of an odd prime, then \hyperref[twistthmclassical]{Theorems \ref*{twistthmclassical}} and \ref{Qratiothm} imply that eigenvalues with multiplicity at least $2$ constitute almost all of the new part of the cuspidal spectrum of the Laplacian on $\Gamma_0(q) \backslash \Hb$, with the only eigenvalues having multiplicity $1$ corresponding to monomial newforms.

More generally, the set
\[\bigcap_{\substack{r' \mid r \\ r' > 1}} \BB_0^{\ast}\left(\Gamma_0(q)\right)_{\nonmon(\chi_{r'})}\]
remains unchanged under twisting by each unique primitive quadratic Dirichlet character $\chi_{r'}$ modulo $r'$ with $r' > 1$ a divisor of $r$. As this set does not contain any monomial newforms, we may complete the proof of \hyperref[introthm]{Theorem \ref*{introthm}}.

\begin{proof}[Proof of \texorpdfstring{\hyperref[introthm]{Theorem \ref*{introthm}}}{Theorem \ref{introthm}}]
Set $r$ to be the largest odd squarefree integer such that $r^2$ divides $q$. Then every newform $f$ in
\[\bigcap_{\substack{r' \mid r \\ r' > 1}} \BB_0^{\ast}\left(\Gamma_0(q)\right)_{\nonmon(\chi_{r'})}\]
has the same Laplacian eigenvalue as $f \otimes \chi_{r'}$ for every integer $r' > 1$ dividing $r$, and as $f$ is nonmonomial, each of these twists is distinct. This gives $2^{s(q)}$ different newforms with the same Laplacian eigenvalue.
\end{proof}

\subsection{Str\"{o}mberg's Results for \texorpdfstring{Maa\ss{}}{Maa\80\337} Forms on \texorpdfstring{$\Gamma_0(9) \backslash \Hb$}{\textGamma\9040\200(9)\textbackslash H}}

\hyperref[introthm]{Theorem \ref*{introthm}} was previously proved in the particular case $q = 9$ by Str\"{o}mberg \cite{Stromberg} via quite different methods. He gives an orthogonal decomposition of the space $\Ac_0^{\ast}\left(\Gamma_0(9);\lambda\right)$ with respect to the Petersson inner product on $\Gamma_0(9) \backslash \Hb$:
\[\Ac_0^{\ast}\left(\Gamma_0(9);\lambda\right) = \left(\Ac_0^{\ast}\left(\SL_2(\Z);\lambda\right) \otimes \chi_3\right) \oplus \left(\Ac_0^{\ast}\left(\Gamma_0(3);\lambda\right) \otimes \chi_3\right) \oplus \left.\Ac_0^{\ast}\left(\Gamma^3;\lambda\right)\right|_{V_3}.\]
Here $\Ac_0^{\ast}\left(\Gamma^3;\lambda\right)$ denotes the new space of Maa\ss{} cusp forms on $\Gamma^3 \backslash \Hb$ having trivial congruence character and Laplacian eigenvalue $\lambda$, which consists of elements orthogonal to direct embeddings in $\Ac_0\left(\Gamma^3\right)$ of cusp forms in $\Ac_0\left(\SL_2(\Z)\right)$. The congruence subgroup $\Gamma^3$ is
\[\Gamma^3 \defeq \left\{\begin{pmatrix} a & b \\ c & d \end{pmatrix} \in \PSL_2(\Z) : ab + cd \equiv 0 \hspace{-.2cm} \pmod{3}\right\},\]
and $V_3$ is the map $\left. f\right|_{V_3}(z) \defeq f(3z)$, which lifts Maa\ss{} forms $f$ on $\Gamma^3 \backslash \Hb$ to Maa\ss{} forms $\left. f\right|_{V_3}$ on $\Gamma_0(9) \backslash \Hb$.

Letting $\lambda = \lambda_f$ denote the Laplacian eigenvalue of a Maa\ss{} cusp form $f$, Str\"{o}mberg uses the Selberg trace formula to prove a Weyl law of the form
\begin{equation}\label{Gamma3ratio}
\frac{\#\left\{f \in \Ac_0^{\ast}\left(\Gamma^3\right) : \left. f\right|_{V_3} \in \BB_0^{\ast}\left(\Gamma_0(9)\right), \ \lambda_f \leq T^2\right\}}{\#\left\{f \in \BB_0^{\ast}\left(\Gamma_0(9)\right) : \lambda_f \leq T^2\right\}} = \frac{2}{5} + o(1) \quad \text{as $T \to \infty$,}
\end{equation}
so that two-fifths of the newforms in $\BB_0^{\ast}\left(\Gamma_0(9)\right)$ arise from members of $\Ac_0^{\ast}\left(\Gamma^3\right)$.

Str\"{o}mberg then shows that the new space of Maa\ss{} cusp forms on $\Gamma^3 \backslash \Hb$ of trivial congruence character further decomposes into two orthogonal eigenspaces corresponding to the two possible eigenvalues $e^{2\pi i/3}$ and $e^{-2\pi i/3}$ of a nontrivial cusp form $f \in \Ac_0^{\ast}\left(\Gamma^3;\lambda\right)$ with respect to the action of the matrix $T \defeq \left(\begin{smallmatrix} 1 & 1 \\ 0 & 1 \end{smallmatrix}\right)$. Moreover, one can map each of these spaces to the other via the action of $J \defeq \left(\begin{smallmatrix} -1 & 0 \\ 0 & 1 \end{smallmatrix}\right) \in \PGL_2(\R)$ (where the action of a matrix with negative determinant means that we replace $z$ by $\overline{z}$), and this mapping preserves the Laplacian eigenvalue. So for every cusp form $f \in \Ac_0^{\ast}\left(\Gamma^3\right)$ for which $\left. f\right|_{V_3} \in \BB_0^{\ast}\left(\Gamma_0(9)\right)$, there exists a distinct cusp form $g \in \Ac_0^{\ast}\left(\Gamma^3\right)$ with $\left. g\right|_{V_3} \in \BB_0^{\ast}\left(\Gamma_0(9)\right)$ for which $\lambda_f = \lambda_g$. By \eqref{Gamma3ratio}, Str\"{o}mberg concludes the following.

\begin{proposition}[Str\"{o}mberg {\cite[Proposition 1.3]{Stromberg}}]
At least two-fifths of the newforms in $\BB_0^{\ast}\left(\Gamma_0(9)\right)$ have repeated Laplacian eigenvalues.
\end{proposition}

In \cite[Section 8]{Stromberg}, Str\"{o}mberg briefly discusses similar results for $\BB_0^{\ast}\left(\Gamma_0(25)\right)$, but already for $\BB_0^{\ast}\left(\Gamma_0(49)\right)$ his approach runs into new obstacles, where spectral multiplicity occurs but cannot immediately be explained by such Maa\ss{} cusp forms arising from congruence subgroups of lower level akin to $\Gamma^3$. In general, it seems that this method is unwieldy for showing spectral multiplicity for higher non-squarefree levels.

Nevertheless, in \cite[Section 6]{Stromberg}, Str\"{o}mberg shows that the two orthogonal parts of $\BB_0^{\ast}\left(\Gamma^3;\lambda\right)$, after embedding in $\BB_0^{\ast}\left(\Gamma_0(9);\lambda\right)$, are related by twisting by the primitive quadratic Dirichlet character $\chi_3$ modulo $3$. From this, it becomes clear that the key to forcing spectral multiplicity is not the existence of Maa\ss{} newforms arising from congruence subgroups of lower level such as $\Gamma^3$, but rather a description of $\BB_0^{\ast}\left(\Gamma_0(q);\lambda\right) \otimes \chi_r$ for each $r$ dividing $q$ for which there exists a primitive quadratic Dirichlet character $\chi_r$. It is this path that we take in strengthening Str\"{o}mberg's result to all moduli $q$ with an odd prime divisor $p$ for which $p^2$ divides $q$.

\section{Some Background on Automorphic Representations}\label{backgroundsect}

\subsection{Local and Global Definitions}

We recall some background information on cuspidal automorphic representations of $\GL_2$; see \cite{Bump,Gelbart,Goldfeld,Jacquet} for further details.

Let $F$ be an algebraic number field and let $\A_F$ denote the ring of ad\`{e}les of $F$. A place of $F$ will be denoted by $v$, and $F_v$ will denote the corresponding local field. We let $S_{\infty}$ and $S_f$ denote the set of archimedean and nonarchimedean places of $F$ respectively. A nonarchimedean place $v$ corresponds to a prime ideal $\pp$ of $F$ via the identification $\pp = \pp_v \cap \OO_F$, where $\pp_v$ is the maximal ideal of the ring of integers $\OO_v$ of the local field $F_v$. 

Given a character $\omega_v : F_v^{\times} \to \Cb^{\times}$ of $F_v^{\times}$ with $v$ nonarchimedean, we may write $\omega_v = \beta_v |\cdot|_v^s$ for some character $\beta_v$ of $\OO_v^{\times}$ (or rather, some character $\beta_v$ of $F_v^{\times}$ that is trivial on $\{\varpi_v^k : k \in \Z\}$) and some $s \in \Cb$. Here the absolute value $|\cdot|_v$ on $F_v^{\times}$ is normalised such that for a uniformiser $\varpi_v$ of $\OO_v$, so that $\varpi_v \OO_v = \pp_v$, we have that $|\varpi_v|_v^{-1} = \# \OO_v / \pp_v \eqdef q_v$, the cardinality of the residue field of $F_v$. If $\omega_v$ is trivial on $\OO_v^{\times}$, then $\omega_v$ is said to be unramified and of conductor $\OO_v$ and conductor exponent $c(\omega_v) = 0$. Otherwise, $\omega_v$ is said to be ramified with conductor $\pp_v^m$ and conductor exponent $c\left(\omega_v\right) = m$ with $m$ the minimal positive integer for which $\omega_v\left(1 + \pp_v^m\right) = 1$.

Let $\left(\pi_v,V_v\right)$ be a generic irreducible admissible representation of $\GL_2(F_v)$ with $v$ a nonarchimedean place of $F$. We say that $\pi_v$ is unramified with conductor $\OO_v$ and conductor exponent $c(\pi_v) = 0$ if the vector subspace
\[V_v^{\GL_2\left(\OO_v\right)} \defeq \left\{\xi_v \in V_v : \pi_v\left(g_v\right) \cdot \xi_v = \xi_v \text{ for all $g_v \in \GL_2\left(\OO_v\right)$}\right\}\]
of $V_v$ is nontrivial, in which case it must be one-dimensional. Otherwise, $\pi_v$ is said to be ramified with conductor $\pp_v^m$ and conductor exponent $c(\pi_v) = m$ with $m$ the minimal positive integer for which
\[V_v^{K_0\left(\pp_v^m\right),\omega_{\pi_v}} \defeq \left\{\xi_v \in V_v : \pi_v\begin{pmatrix} a & b \\ c & d \end{pmatrix} \cdot \xi_v = \omega_{\pi_v}(d) \xi_v \text{ for all $\begin{pmatrix} a & b \\ c & d \end{pmatrix} \in K_0\left(\pp_v^m\right)$}\right\}\]
is nontrivial; here $\omega_{\pi_v}$ is the central character of $\pi_v$ and
\[K_0\left(\pp_v^m\right) \defeq \left\{\begin{pmatrix} a & b \\ c & d \end{pmatrix} \in \GL_2\left(\OO_v\right) : c \in \pp_v^m\right\}.\]
Via the work of Casselman \cite{Casselman}, $V_v^{K_0\left(\pp_v^m\right),\omega_{\pi_v}}$ is one-dimensional, and so contains a nonzero element $\xi_v^{\circ} \in V_v$ named the local newvector of $\pi_v$ that is unique up to scalar multiplication.

When $v$ is archimedean, the analogue of a local newform is a local lowest weight vector $\xi_v^{\circ}$ of $V_v$; these are similarly unique up to scalar multiplication.

Let $(\pi,V)$ be a cuspidal automorphic representation of $\GL_2(\A_F)$ of central character $\omega_{\pi} = \prod_v \omega_{\pi_v}$ on a space $V$ of cuspidal automorphic forms of $\GL_2(\A_F)$ of central character $\omega_{\pi}$. Throughout, we will always assume that automorphic representations and Hecke characters are unitary. By Flath's tensor product theorem (see \cite[Theorem 3.3.3]{Bump}), there exists an irreducible admissible $\left(\gl_2\left(F_v\right)_{\Cb},K_v\right)$-module $\left(\pi_v,V_v\right)$ for each archimedean place $v$ and a generic irreducible admissible unitarisable representation $\left(\pi_v, V_v\right)$ of $\GL_2(F_v)$ with central character $\omega_{\pi_v}$ for each nonarchimedean place $v$ such that $(\pi,V)$ is isomorphic to the restricted tensor product $\left(\bigotimes_v \pi_v, \bigotimes_v V_v\right)$, where we may make the identification according to the local newvector or local lowest weight vector $\xi_v^{\circ} \in V_v$ at each place. The global newvector of $\pi$ is the cuspidal automorphic form $\xi^{\circ} \in V$ that corresponds to the pure tensor $\bigotimes_v \xi_v^{\circ}$.

The arithmetic conductor of $\pi$ is defined to be the integral ideal
\[\qq \defeq \prod_{v \in S_f} \left(\pp_v \cap \OO_F\right)^{c\left(\pi_v\right)}\]
of $\OO_F$; the representation $\pi$ is ramified at only finitely many places, so this is well-defined. Similarly, the arithmetic conductor of a Hecke character $\omega = \prod_v \omega_v$ of $F^{\times} \backslash \A_F^{\times}$ is
\[\ff \defeq \prod_{v \in S_f} \left(\pp_v \cap \OO_F\right)^{c\left(\omega_v\right)}.\]
When $\omega = \omega_{\pi}$ is the central character of $\pi$, $c\left(\omega_{\pi_v}\right)$ is at most $c(\pi_v)$ for each nonarchimedean place $v$ of $F$, so that the arithmetic conductor $\ff$ of $\omega_{\pi}$ divides the arithmetic conductor $\qq$ of $\pi$.

When $F = \Q$, the global newvector of $\pi$ is the ad\`{e}lic lift of a classical newform $f$; see \cite[Section 4.12]{Goldfeld}. Moreover, the central character $\omega_{\pi}$ is the id\`{e}lic lift of the character $\chi$ of $f$, while the arithmetic conductor of $\omega_{\pi}$ is the ideal in $\Z$ generated by the conductor of $\chi$, the arithmetic conductor of $\pi$ is the ideal generated by the level $q$ of $f$, and the archimedean component $\pi_{\infty}$ of $\pi$ specifies its Laplacian eigenvalue $\lambda_f$ and its weight $k$.

Conversely, given a newform $f$ of level $q$ and character $\chi$, there exists a cuspidal automorphic representation of $\GL_2(\A_{\Q})$, unique up to isomorphism, with conductor exponent $c(\pi_p)$ at each prime $p$ satisfying $p^{c(\pi_p)} \parallel q$, whose central character $\omega_{\pi}$ is the id\`{e}lic lift of $\chi$, and whose global newvector $\xi^{\circ}$ is the ad\`{e}lic lift of $f$. In particular, there is a bijective correspondence between newforms and cuspidal automorphic representations.

\subsection{Bounding the Conductor Exponent of a Twist}

Let $(\pi,V)$ be a cuspidal automorphic representation of $\GL_2(\A_F)$ with central character $\omega_{\pi}$. The twist of $\pi$ by a Hecke character $\omega$ is the cuspidal automorphic representation $(\pi \otimes \omega,V_{\omega})$ with central character $\omega^2 \omega_{\pi}$ acting on the vector space
\[V_{\omega} \defeq \left\{\phi_{\omega} : \phi \in V\right\},\]
where $\phi_{\omega}(g) \defeq \omega(\det g) \phi(g)$ for $g \in \GL_2(\A_F)$. If we write $\omega = \prod_v \omega_v$, then the local components of $\left(\pi \otimes \omega,V_{\omega}\right)$ are again generic irreducible admissible representations $\left(\pi_v \otimes \omega_v,V_v\right)$ with central character $\omega_v^2 \omega_{\pi_v}$ and action
\[\left(\pi_v \otimes \omega_v\right)\left(g_v\right) \cdot \xi_v \defeq \omega_v(\det g_v) \pi_v\left(g_v\right) \cdot \xi_v\]
for $g_v \in \GL_2\left(F_v\right)$ and $\xi_v \in V_v$.

Let $\pi$ be a cuspidal automorphic representation of $\GL_2(\A_F)$, and suppose that there exists a twist $\pi \otimes \omega$ of $\pi$ by some Hecke character $\omega$ such that $\pi \otimes \omega$ has the same central character and arithmetic conductor as $\pi$. Then $\omega$ must be a quadratic Hecke character, and the local conductor exponents must satisfy $c(\pi_v \otimes \omega_v) = c(\pi_v)$ at every nonarchimedean place $v$. This equality automatically holds when $\omega_v$ is unramified. If $\omega_v$ is ramified, on the other hand, then this equality is not always ensured. We will determine when this occurs in the particular case when the central character of $\pi$ is trivial. The result is obtained on a case-by-case analysis, which requires the classification of generic irreducible admissible representations of $\GL_2(F_v)$. A key tool is the following bound for the conductor exponent.

\begin{lemma}\label{twistcondlemma}
Let $v$ be a nonarchimedean place of an algebraic number field $F$. Let $(\pi_v,V_v)$ be a generic irreducible admissible representation of $\GL_2(F_v)$ with central character $\omega_{\pi_v}$, and let $\omega_v$ be a unitary character of $F_v^{\times}$. Then we have the following bound for the conductor exponent of $\pi_v \otimes \omega_v$:
\[c\left(\pi_v \otimes \omega_v\right) \leq \max\left\{c\left(\pi_v\right), c\left(\omega_v\right) + c\left(\omega_{\pi_v}\right), 2c\left(\omega_v\right)\right\}.\]
\end{lemma}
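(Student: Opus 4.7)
\emph{Plan.} The task is to exhibit a nonzero vector $\xi_v \in V_v$ on which the twisted representation $\pi_v \otimes \omega_v$ acts by its central character $\omega_v^2 \omega_{\pi_v}$ under every element of $K_0(\pp_v^n)$, where $n = \max\{c(\pi_v), c(\omega_v) + c(\omega_{\pi_v}), 2c(\omega_v)\}$. Unwinding the definition of $\pi_v \otimes \omega_v$, this is equivalent to $\pi_v(g) \xi_v = \omega_v(d^2/\det g) \omega_{\pi_v}(d) \xi_v$ for every $g = \left(\begin{smallmatrix} a & b \\ c & d \end{smallmatrix}\right) \in K_0(\pp_v^n)$. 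When $\omega_v$ is unramified, $\omega_v$ is trivial on $\OO_v^\times$, so the local newvector $\xi_v^\circ$ of $\pi_v$ itself satisfies this condition, and the bound collapses to $c(\pi_v \otimes \omega_v) \leq c(\pi_v)$ with no further work.

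For ramified $\omega_v$ the plan is to verify the bound by case analysis using the classification of generic irreducible admissible representations of $\GL_2(F_v)$ into irreducible principal series $\pi(\chi_1,\chi_2)$, twists $\chi \St$ of the Steinberg representation, and supercuspidal representations. In the principal series case, $c(\pi_v) = c(\chi_1) + c(\chi_2)$, $\omega_{\pi_v} = \chi_1 \chi_2$, and $\pi_v \otimes \omega_v \cong \pi(\chi_1 \omega_v, \chi_2 \omega_v)$, so everything reduces to controlling $c(\chi_i \omega_v)$. Using the standard character-twist inequality $c(\chi \omega_v) \leq \max\{c(\chi), c(\omega_v)\}$, with equality whenever $c(\chi) \neq c(\omega_v)$, a routine subcase analysis based on the sizes of $c(\chi_1), c(\chi_2)$ relative to $c(\omega_v)$ produces exactly one of $2c(\omega_v)$, $c(\omega_v) + c(\omega_{\pi_v})$, or $c(\pi_v)$ as an upper bound. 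The Steinberg case reduces analogously via $c(\chi \St) = \max\{2c(\chi), 1\}$ and $\omega_{\chi \St} = \chi^2$.

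The main obstacle is the supercuspidal case, where no such inducing character data is available. The cleanest route is via the local Langlands correspondence: $\pi_v$ corresponds to an irreducible two-dimensional representation $\rho_v$ of the Weil group $W_{F_v}$ with $c(\pi_v) = a(\rho_v)$ (the Artin conductor exponent), $\det \rho_v$ matches $\omega_{\pi_v}$ under local class field theory, and twisting by $\omega_v$ on the automorphic side corresponds to tensoring $\rho_v$ by the Weil group character attached to $\omega_v$. The claim then follows from the standard inequality $a(\rho_v \otimes \omega_v) \leq \max\{a(\rho_v), a(\omega_v) + a(\det \rho_v), 2a(\omega_v)\}$ for Artin conductors of character twists of two-dimensional Weil group representations. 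Alternatively, one may argue directly in $V_v$ by constructing $\xi_v$ as a Gauss-sum-type average of translates $\pi_v(g) \xi_v^\circ$ of the local newvector over suitable coset representatives weighted by $\omega_v$; the three terms in the maximum then arise respectively from the newvector invariance of $\xi_v^\circ$, the need for $\omega_{\pi_v}$ to be well-defined on the $(1+\pp_v^{c(\omega_v)})$-cosets appearing in the average, and the fact that $\omega_v(d^2/\det g)$ coincides with $\omega_v(d/a)$ on $K_0(\pp_v^n)$ only once $n \geq 2c(\omega_v)$.
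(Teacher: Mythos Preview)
Your case-by-case approach via the classification is workable for principal series and special representations, and those subcases do check out as you sketch. But the paper does \emph{not} proceed this way: it gives a single uniform argument valid for all three types at once, namely the Gauss-sum construction you mention only as an alternative in your last sentence. Starting from the local newvector $\xi_v^\circ$, the paper forms
\[
\xi_v' \defeq \tau(\overline{\omega_v})^{-1} \sum_{u \in \OO_v^\times / (1+\pp_v^{c(\omega_v)})} \overline{\omega_v}(u)\, \pi_v\begin{pmatrix} 1 & u\varpi_v^{-c(\omega_v)} \\ 0 & 1 \end{pmatrix} \cdot \xi_v^\circ,
\]
checks $\Lambda_v(\xi_v') = 1$ so that $\xi_v' \neq 0$, and then verifies $\xi_v' \in V_v^{K_0(\pp_v^m),\, \omega_v^2\omega_{\pi_v}}$ by an explicit matrix identity; the three terms in the maximum arise for precisely the reasons you name at the end. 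This avoids any appeal to the classification or to local Langlands, and in particular treats the supercuspidal case on exactly the same footing as the others.

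One caution about your supercuspidal step: the Artin-conductor inequality $a(\rho_v \otimes \omega_v) \leq \max\{a(\rho_v),\, a(\omega_v) + a(\det\rho_v),\, 2a(\omega_v)\}$ that you call ``standard'' is, under local Langlands, exactly the statement of the lemma transported to the Galois side. Unless you supply an independent Galois-theoretic proof (possible via the ramification filtration, but genuine work), this step is circular. The uniform Gauss-sum argument sidesteps the issue entirely, which is what makes the paper's route both shorter and self-contained.
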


This is proved in \cite[Proposition 3.1]{Atkin2} by Atkin and Li in the classical setting of holomorphic newforms (that is, cuspidal automorphic representations of $\GL_2(\A_{\Q})$ whose archimedean component is a discrete series representation or a limit of discrete series representation). We give a local proof here that essentially mimics Atkin and Li's proof.

\begin{proof}
We must show that that there exists a nonzero vector $\xi_v' \in V_v$ satisfying
\begin{equation}\label{piomegatwisteq}
\left(\pi_v \otimes \omega_v\right) \begin{pmatrix} a & b \\ c & d \end{pmatrix} \cdot \xi_v' = \omega_v(d)^2 \omega_{\pi_v}(d) \xi_v'
\end{equation}
for all $\left(\begin{smallmatrix} a & b \\ c & d \end{smallmatrix}\right) \in K_0\left(\pp_v^m\right)$, where
\[m = \max\left\{c\left(\pi_v\right), c\left(\omega_v\right) + c\left(\omega_{\pi_v}\right), 2c\left(\omega_v\right)\right\}.\]
As $\pi_v$ is generic, there exists a nontrivial continuous unramified additive character $\psi_v$ and a Whittaker functional $\Lambda_v : V_v \to \Cb$ satisfying
\[\Lambda_v\left(\pi_v\begin{pmatrix} 1 & x \\ 0 & 1 \end{pmatrix} \cdot \xi_v\right) = \psi_v(x) \Lambda_v\left(\xi_v\right)\]
for every $x \in F_v$ and $\xi_v \in V_v$. Let $\xi_v^{\circ} \in V_v^{K_0(\pp_v^{c(\pi_v)}), \omega_{\pi_v}}$ be the local newvector of $\pi_v$ satisfying $\Lambda_v(\xi_v^{\circ}) = 1$. We define $\xi_v' \in V_v$ by
\[\xi_v' \defeq \int\limits_{\varpi_v^{-c\left(\omega_v\right)} \OO_v^{\times}} \omega_v(x) \pi_v\begin{pmatrix} 1 & x \\ 0 & 1 \end{pmatrix} \cdot \xi_v^{\circ} \, dx,\]
where the Haar measure $dx$ is normalised to give $\OO_v$ volume $1$. As $\Lambda_v\left(\xi_v^{\circ}\right) = 1$, we have that $\Lambda_v\left(\xi_v'\right) = \epsilon_v\left(0,\omega_v,\psi_v\right)$, where
\[\epsilon_v\left(s,\omega_v,\psi_v\right) \defeq \int\limits_{\varpi_v^{-c\left(\omega_v\right)} \OO_v^{\times}} |x|^{-s} \omega_v(x) \psi_v(x) \, dx\]
is the local epsilon factor of $\omega_v$, which is nonzero for all $s \in \Cb$. It follows that $\xi_v'$ is nonzero.

For $\left(\begin{smallmatrix} a & b \\ c & d \end{smallmatrix}\right) \in K_0(\pp_v^m)$, so that $a,d \in \OO_v^{\times}$ and $c \in \pp_v^m$, we have that
\begin{equation}\label{piomegatwistinteq}
\left(\pi_v \otimes \omega_v\right) \begin{pmatrix} a & b \\ c & d \end{pmatrix} \cdot \xi_v' = \omega_v(ad - bc) \int\limits_{\varpi_v^{-c\left(\omega_v\right)} \OO_v^{\times}} \omega_v(x) \pi_v\left(\begin{pmatrix} a & b \\ c & d \end{pmatrix} \begin{pmatrix} 1 & x \\ 0 & 1 \end{pmatrix}\right) \cdot \xi_v^{\circ} \, dx.
\end{equation}
As $\omega_v(ad - bc) = \omega_v(ad)$ due to the fact that $m \geq c(\omega_v)$ and
\[\begin{pmatrix} a & b \\ c & d \end{pmatrix} \begin{pmatrix} 1 & x \\ 0 & 1 \end{pmatrix} = \begin{pmatrix} a & ax + b \\ c & cx + d \end{pmatrix} = \begin{pmatrix} 1 & ad^{-1} x \\ 0 & 1 \end{pmatrix} \begin{pmatrix} a(1 - cd^{-1}x) & b - acd^{-1}x^2 \\ c & cx + d \end{pmatrix},\]
where the second matrix on the right-hand side is in $K_0(\pp_v^m)$ due to the fact that $m \geq 2c(\omega_v)$, \eqref{piomegatwistinteq} is equal to
\[\omega_v(ad) \int\limits_{\varpi_v^{-c\left(\omega_v\right)} \OO_v^{\times}} \omega_v(x) \omega_{\pi_v}(cx + d) \pi_v\begin{pmatrix} 1 & ad^{-1} x \\ 0 & 1 \end{pmatrix} \cdot \xi_v^{\circ} \, dx\]
because $m \geq c(\pi_v)$ and so $V_v^{K_0(\pp_v^m), \omega_{\pi_v}} \supset V_v^{K_0(\pp_v^{c(\pi_v)}), \omega_{\pi_v}} \ni \xi_v^{\circ}$. Making the change of variables $x \mapsto a^{-1}dx$, \eqref{piomegatwistinteq} becomes
\[\omega_v(d)^2 \omega_{\pi_v}(d) \int\limits_{\varpi_v^{-c\left(\omega_v\right)} \OO_v^{\times}} \omega_v(x) \omega_{\pi_v}(a^{-1}cx + 1) \pi_v\begin{pmatrix} 1 & x \\ 0 & 1 \end{pmatrix} \cdot \xi_v^{\circ} \, dx.\]
As $m \geq c(\omega_v) + c(\omega_{\pi_v})$, we have that $\omega_{\pi_v}(a^{-1}cx + 1) = 1$. It follows that \eqref{piomegatwisteq} holds, as desired.
\end{proof}

\subsection{Representations of \texorpdfstring{$\GL_2(F_v)$}{GL\9040\202(F\9035\145)} at Nonarchimedean Places}

Let $v$ be a nondyadic nonarchimedean place of $F$, which is to say that $\pp_v \cap \Z \neq 2\Z$. There exists a unique nontrivial character $\beta_v$ of $\OO_v^{\times}$ satisfying $\beta_v^2 = 1$, which we denote $\beta_{v,\Quad}$. We may view $\beta_{v,\Quad}$ as a character of $F_v^{\times}$ that is trivial on $\{\varpi_v^k : k \in \Z\}$; as $v$ is nondyadic, $\beta_{v,\Quad}$ has conductor $\pp_v$ (and hence conductor exponent $1$). As every character $\omega_v$ of $F_v^{\times}$ is equal to $\beta_v |\cdot|_v^s$ for some character $\beta_v$ of $\OO_v^{\times}$ and some $s \in \Cb$, it follows that there are three quadratic characters of $F_v^{\times}$, namely
\[|\cdot|_v^{\frac{\pi i}{\log q_v}}, \qquad \beta_{v,\Quad}, \qquad \beta_{v,\Quad} |\cdot|_v^{\frac{\pi i}{\log q_v}},\]
where as before $q_v \defeq \# \OO_v / \pp_v = |\varpi_v|_v^{-1}$. The first character is unramified, and so twisting a representation $\pi_v$ by $|\cdot|_v^{\frac{\pi i}{\log q_v}}$ does not change the conductor of $\pi_v$, while for the third, we have that
\[\pi_v \otimes \beta_{v,\Quad} |\cdot|_v^{\frac{\pi i}{\log q_v}} = \left(\pi_v \otimes |\cdot|_v^{\frac{\pi i}{\log q_v}}\right) \otimes \beta_{v,\Quad}.\]
So it suffices to classify the representations $\pi_v$ for which $c(\pi_v \otimes \beta_{v,\Quad}) = c(\pi_v)$.

Let $\pi_v$ be a generic irreducible admissible unitarisable representation of $\GL_2(F_v)$ with central character $\omega_v$. We recall that such representations can be classified as either principal series representations, special representations, or supercuspidal representations. Standard reference for the properties of these representations are \cite[Chapter 4]{Bump} and \cite[Chapter 6]{Goldfeld}, while \cite{Schmidt} discusses the conductor exponents of these representations.

\subsubsection{Principal Series Representations of \texorpdfstring{$\GL_2(F_v)$}{GL\9040\202(F\9035\145)}}

A principal series representation $\pi_v$ of $\GL_2(F_v)$ is unitarily induced from a representation of the Borel subgroup of $\GL_2(F_v)$, and these representations are in turn indexed by two characters
\[\omega_{v,1} = \beta_{v,1} |\cdot|_v^{s_1}, \qquad \omega_{v,2} = \beta_{v,2} |\cdot|_v^{s_2}\]
of $F_v^{\times}$; here $\beta_{v,1},\beta_{v,2}$ are characters of $\OO_v^{\times}$ and $s_1,s_2 \in \Cb$. We write
\[\pi_v \cong \omega_{v,1} \boxplus \omega_{v,2}.\]
This representation is irreducible and unitarisable if and only if either $s_1, s_2 \in i\R$, or $s_1 + s_2 \in i\R$ with $s_1 - s_2 \in (-1,1)$ and $\beta_{v,1} = \beta_{v,2}$. The central character of $\pi_v$ is
\[\omega_{\pi_v} = \omega_{v,1} \omega_{v,2} = \beta_{v,1} \beta_{v,2} |\cdot|_v^{s_1 + s_2}.\]
The conductor exponent $c(\pi_v)$ of $\pi_v$ is $c(\omega_{v,1}) + c(\omega_{v,2})$. If $\omega_v'$ is a character of $F_v^{\times}$, the twist of $\pi_v$ by $\omega_v'$ is the principal series representation
\[\pi_v \otimes \omega_v' \cong \omega_{v,1} \omega_v' \boxplus \omega_{v,2} \omega_v'.\]
If $\pi_v$ has trivial central character, then $\omega_{v,2} = \omega_{v,1}^{-1}$, so that $\beta_{v,2} = \beta_{v,1}^{-1}$ and $s_2 = -s_1$, and $c(\pi_v) = 2c(\omega_{v,1})$.

\begin{lemma}\label{principaltwist}
Let $\pi_v$ be a ramified irreducible unitarisable principal series representation with trivial central character, so that
\[\pi_v \cong \beta_v |\cdot|_v^s \boxplus \beta_v^{-1} |\cdot|_v^{-s}\]
for some nontrivial character $\beta_v$ of $\OO_v^{\times}$ and some $s \in i\R \cup (-1/2,1/2)$. Then $c(\pi_v \otimes \beta_{v,\Quad}) = c(\pi_v)$ unless $\beta_v = \beta_{v,\Quad}$, in which case $c(\pi_v) = 2$ but $c(\pi_v \otimes \beta_{v,\Quad}) = 0$.
\end{lemma}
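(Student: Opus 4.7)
The plan is to exploit the two structural facts about principal series recalled just before the lemma: the twisting rule $(\omega_{v,1} \boxplus \omega_{v,2}) \otimes \omega_v' \cong \omega_{v,1}\omega_v' \boxplus \omega_{v,2}\omega_v'$ and the additivity $c(\omega_{v,1} \boxplus \omega_{v,2}) = c(\omega_{v,1}) + c(\omega_{v,2})$. Applied to the given $\pi_v$, these give
\[
\pi_v \otimes \beta_{v,\Quad} \cong \beta_v \beta_{v,\Quad} |\cdot|_v^s \boxplus \beta_v^{-1} \beta_{v,\Quad} |\cdot|_v^{-s},
\]
so the lemma reduces to comparing $c(\beta_v \beta_{v,\Quad}) + c(\beta_v^{-1} \beta_{v,\Quad})$ with $c(\pi_v) = 2c(\beta_v)$. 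The remainder is an elementary computation with characters of $\OO_v^\times$.

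The key input I would use is the standard fact that for two characters $\chi,\chi'$ of $\OO_v^\times$ with $c(\chi) \neq c(\chi')$ one has $c(\chi \chi') = \max\{c(\chi), c(\chi')\}$, while for characters of the same conductor exponent the exponent of the product can only drop (and equals $0$ precisely when $\chi' = \chi^{-1}$). Since $v$ is nondyadic, $c(\beta_{v,\Quad}) = 1$. I then split into cases on $c(\beta_v)$, which is strictly positive by the ramifiedness assumption on $\pi_v$. When $c(\beta_v) \geq 2$, both $\beta_v \beta_{v,\Quad}$ and $\beta_v^{-1} \beta_{v,\Quad}$ have conductor exponent $c(\beta_v)$, so $c(\pi_v \otimes \beta_{v,\Quad}) = 2c(\beta_v) = c(\pi_v)$. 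When $c(\beta_v) = 1$, both $\beta_v$ and $\beta_{v,\Quad}$ factor through $(\OO_v/\pp_v)^\times$, a cyclic group in which $\beta_{v,\Quad}$ is the unique element of order two; hence $\beta_v \beta_{v,\Quad}$ is trivial exactly when $\beta_v = \beta_{v,\Quad}$ (in which case $\beta_v^{-1} \beta_{v,\Quad}$ is trivial as well, since $\beta_{v,\Quad}^2 = 1$). This yields the exceptional case $c(\pi_v \otimes \beta_{v,\Quad}) = 0$ while $c(\pi_v) = 2$; otherwise neither product is trivial, both have conductor exponent $1$, and the conductor is preserved.

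There is no serious obstacle here. The only subtle point to keep in mind is that at a nondyadic place, $\beta_{v,\Quad}$ is the \emph{unique} ramified quadratic character of $\OO_v^\times$ and is self-inverse, which forces the exceptional case to arise precisely when $\beta_v$ coincides with $\beta_{v,\Quad}$; in that scenario the twist cancels the ramification of $\pi_v$ entirely and the resulting representation is the unramified principal series $|\cdot|_v^s \boxplus |\cdot|_v^{-s}$, which is still irreducible and unitarisable since $s \in i\R$.
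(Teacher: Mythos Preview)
Your proof is correct and takes a genuinely different route from the paper's. The paper argues via Lemma~\ref{twistcondlemma} (the general upper bound $c(\pi_v \otimes \omega_v) \leq \max\{c(\pi_v), c(\omega_v) + c(\omega_{\pi_v}), 2c(\omega_v)\}$) combined with a strong induction on $c(\pi_v)$: the bound gives $c(\pi_v \otimes \beta_{v,\Quad}) \leq c(\pi_v)$, and a strict drop is ruled out inductively by twisting back. You instead exploit the explicit conductor formula $c(\omega_{v,1} \boxplus \omega_{v,2}) = c(\omega_{v,1}) + c(\omega_{v,2})$ together with the elementary fact that $c(\chi\chi') = \max\{c(\chi),c(\chi')\}$ when the exponents differ, reducing everything to a short case analysis on $c(\beta_v)$. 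Your argument is more direct for principal series and avoids Lemma~\ref{twistcondlemma} entirely. The paper's approach, on the other hand, has the advantage that the same induction template is reused verbatim for the supercuspidal case (Lemma~\ref{supercuspidaltwist}), where no explicit additive conductor formula is available; your method relies on structure specific to principal series and would not transfer there.
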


\begin{proof}
The conductor exponent $c(\pi_v)$ of $\pi_v$ is $2c(\beta_v)$, and in particular is positive and even. We have that
\[\pi_v \otimes \beta_{v,\Quad} \cong \beta_v \beta_{v,\Quad} |\cdot|_v^s \boxplus \beta_v^{-1} \beta_{v,\Quad} |\cdot|_v^{-s},\]
which also has trivial central character. If $\beta_v = \beta_{v,\Quad}$, in which case $c(\pi_v) = 2$, then
\[\pi_v \otimes \beta_{v,\Quad} = |\cdot|_v^s \boxplus |\cdot|_v^{-s},\]
and so $c(\pi_v \otimes \beta_{v,\Quad}) = 0$. If $\beta_v \neq \beta_{v,\Quad}$, then
\begin{align*}
c\left(\pi_v \otimes \beta_{v,\Quad}\right) & = c\left(\beta_v \beta_{v,\Quad}\right) + c\left(\beta_v^{-1} \beta_{v,\Quad}\right)	\\
& = c\left(\beta_v\right) + c\left(\beta_v^{-1}\right)	\\
& = c\left(\pi_v\right)
\end{align*}
as $c(\beta_v \beta_{v,\Quad}) \leq \max\{c(\beta_v), c(\beta_{v,\Quad})\} \leq 1$, and also $c(\beta_v \beta_{v,\Quad}) \geq 1$ as $\beta_v \beta_{v,\Quad}$ is a nontrivial character of $\OO_v^{\times}$.
\end{proof}

\subsubsection{Special Representations of \texorpdfstring{$\GL_2(F_v)$}{GL\9040\202(F\9035\145)}}

A special representation is a twist of a Steinberg represention: it is the unique irreducible subrepresentation
\[\pi_v \cong \omega_v \St_v\]
of codimension one of the reducible principal series representation $\omega_v |\cdot|_v^{1/2} \boxplus \omega_v |\cdot|_v^{-1/2}$, where $\omega_v = \beta_v |\cdot|_v^s$ is a character of $F_v^{\times}$, with $\beta_v$ a character of $\OO_v^{\times}$ and $s \in \Cb$; $\pi_v$ is unitary precisely when $s \in i\R$. The central character of $\pi_v$ is
\[\omega_{\pi_v} = \omega_v^2 = \beta_v^2 |\cdot|_v^{2s}.\]
The conductor exponent is
\begin{equation}\label{specialconductor}
c\left(\pi_v\right) = \begin{dcases*}
1 & if $c\left(\omega_v\right) = 0$,	\\
2c\left(\omega_v\right) & otherwise.
\end{dcases*}
\end{equation}
If $\omega_v'$ is a character of $F_v^{\times}$, the twist of $\pi_v$ by $\omega_v'$ is the special representation
\[\pi_v \otimes \omega_v' = \omega_v \omega_v' \St_v.\]
Suppose that $\pi_v$ is unitary and has trivial central character. Then $\omega_v^2 = 1$; consequently, we may assume without loss of generality that $s = s_{\Quad} \in \left\{0, \frac{\pi i}{\log q_v}\right\}$, and we must have that $\beta_v \in \left\{1, \beta_{v,\Quad}\right\}$ and that
\[c\left(\pi_v\right) = \begin{dcases*}
1 & if $\beta_v = 1$,	\\
2 & if $\beta_v = \beta_{v,\Quad}$.
\end{dcases*}\]

\begin{lemma}\label{specialtwist}
Let $\pi_v \cong \omega_v \St_v$ be a unitary special representation with trivial central character, so that $\omega_v = \beta_v |\cdot|_v^{s_{\Quad}}$ with $\beta_v \in \{1, \beta_{v,\Quad}\}$ and $s_{\Quad} \in \left\{0, \frac{\pi i}{\log q_v}\right\}$. Then
\[\pi_v \otimes \beta_{v,\Quad} \cong \begin{dcases*}
\beta_{v,\Quad} |\cdot|_v^{s_{\Quad}} \St_v & if $\beta_v = 1$,	\\
|\cdot|_v^{s_{\Quad}} \St_v & if $\beta_v = \beta_{v,\Quad}$,
\end{dcases*}\]
and hence
\[c\left(\pi_v \otimes \beta_{v,\Quad}\right) = \begin{dcases*}
c\left(\pi_v\right) + 1 & if $\beta_v = 1$,	\\
c\left(\pi_v\right) - 1 & if $\beta_v = \beta_{v,\Quad}$.
\end{dcases*}\]
\end{lemma}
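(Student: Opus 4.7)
The proof reduces to a direct application of two facts established in the preceding discussion of special representations: the twisting formula $\pi_v \otimes \omega_v' \cong \omega_v \omega_v' \St_v$, and the conductor formula $c\left(\omega_v'' \St_v\right) = 1$ if $\omega_v''$ is unramified and $2c\left(\omega_v''\right)$ otherwise. Since $\beta_{v,\Quad}^2$ is trivial on $F_v^\times$ (viewing $\beta_{v,\Quad}$ as the character of $F_v^\times$ that is trivial on $\varpi_v^\Z$), twisting $\pi_v \cong \beta_v |\cdot|_v^{s_{\Quad}} \St_v$ by $\beta_{v,\Quad}$ yields $\beta_v \beta_{v,\Quad} |\cdot|_v^{s_{\Quad}} \St_v$, which collapses to $\beta_{v,\Quad} |\cdot|_v^{s_{\Quad}} \St_v$ when $\beta_v = 1$ and to $|\cdot|_v^{s_{\Quad}} \St_v$ when $\beta_v = \beta_{v,\Quad}$, as claimed.

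For the conductor exponents, I would split into the two cases according to whether the inducing character of $\pi_v$ is ramified. When $\beta_v = 1$, the original inducing character $|\cdot|_v^{s_{\Quad}}$ is unramified, so $c(\pi_v) = 1$; the twisted inducing character $\beta_{v,\Quad} |\cdot|_v^{s_{\Quad}}$ is ramified with conductor exponent $1$, using that $v$ is nondyadic, so $c(\pi_v \otimes \beta_{v,\Quad}) = 2 = c(\pi_v) + 1$. The case $\beta_v = \beta_{v,\Quad}$ is symmetric: the ramified and unramified roles are exchanged, giving $c(\pi_v) = 2$ and $c(\pi_v \otimes \beta_{v,\Quad}) = 1 = c(\pi_v) - 1$.

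There is essentially no obstacle here: the lemma is a short bookkeeping exercise once the twisting and conductor formulas for $\omega_v \St_v$ are in hand, the essential point being that $\beta_{v,\Quad}$ is its own inverse. The only minor thing worth flagging explicitly is that the trivial-central-character hypothesis is preserved by the twist, since $\left(\omega_v \beta_{v,\Quad}\right)^2 = \omega_v^2 \beta_{v,\Quad}^2 = 1$, so that the twisted representation remains of the form considered in the lemma and the conductor formula applies to it in the same way.
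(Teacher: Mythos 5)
Your proof is correct and follows essentially the same route as the paper: apply the twisting formula $\omega_v\St_v\otimes\omega_v'\cong\omega_v\omega_v'\St_v$ together with $\beta_{v,\Quad}^2=1$, then read off the conductor exponents from the formula $c(\omega_v\St_v)=1$ or $2c(\omega_v)$ according to whether $\omega_v$ is unramified. The observations that $v$ nondyadic gives $c(\beta_{v,\Quad})=1$ and that the twist preserves triviality of the central character are exactly the bookkeeping the paper relies on as well.
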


\begin{proof}
This follows from \eqref{specialconductor} with $\pi_v$ replaced by $\pi_v \otimes \beta_{v,\Quad}$.
\end{proof}

\begin{lemma}\label{specialpv}
Every generic irreducible admissible unitarisable representation of $\GL_2(F_v)$ with conductor $\pp_v$ and trivial central character is of the form $|\cdot|_v^{s_{\Quad}} \St_v$ with $s_{\Quad} \in \left\{0, \frac{\pi i}{\log q_v}\right\}$.
\end{lemma}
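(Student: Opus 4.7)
The plan is to eliminate each case in the standard classification of generic irreducible admissible unitarisable representations of $\GL_2(F_v)$ (principal series, special, supercuspidal) that is incompatible with the hypotheses $c(\pi_v) = 1$ and $\omega_{\pi_v} = 1$, leaving only the representations listed in the statement.

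First I would dispose of the principal series case: if $\pi_v \cong \omega_{v,1} \boxplus \omega_{v,2}$ has trivial central character, then $\omega_{v,2} = \omega_{v,1}^{-1}$, so $c(\omega_{v,1}) = c(\omega_{v,2})$ and hence $c(\pi_v) = c(\omega_{v,1}) + c(\omega_{v,2}) = 2c(\omega_{v,1})$ is even. This is incompatible with $c(\pi_v) = 1$. For the special case, write $\pi_v \cong \omega_v \St_v$; trivial central character forces $\omega_v^2 = 1$, so $\omega_v$ is one of the four quadratic characters of $F_v^\times$ enumerated in the discussion preceding Lemma \ref{principaltwist}. The unramified quadratic characters $|\cdot|_v^{s_{\Quad}}$ with $s_{\Quad} \in \{0, \pi i / \log q_v\}$ give $c(\pi_v) = 1$, while the ramified ones $\beta_{v,\Quad}|\cdot|_v^{s_{\Quad}}$ give $c(\pi_v) = 2$. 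Thus in the special case only $\omega_v = |\cdot|_v^{s_{\Quad}}$ with $s_{\Quad} \in \{0, \pi i / \log q_v\}$ survives, yielding exactly the representations in the conclusion.

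The step I expect to be the main obstacle is ruling out supercuspidal representations, since it requires invoking a fact not established explicitly earlier in the paper: for any nonarchimedean local field $F_v$, every supercuspidal representation of $\GL_2(F_v)$ has conductor exponent at least $2$. I would cite this (for instance via Casselman's analysis, or equivalently the fact that an irreducible representation with $\GL_2(\OO_v)$-fixed vector is unramified principal series and one with conductor exponent $1$ is a twist of Steinberg by an unramified character). Once this is in hand, $c(\pi_v) = 1$ forbids $\pi_v$ from being supercuspidal, completing the case analysis and giving the desired classification.
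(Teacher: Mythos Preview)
Your proposal is correct and follows essentially the same case-by-case elimination as the paper's proof: rule out principal series by parity of the conductor exponent, rule out supercuspidals by the lower bound $c(\pi_v)\geq 2$, and then observe that the only special representations with trivial central character and conductor $\pp_v$ are $|\cdot|_v^{s_{\Quad}}\St_v$. One small note: the fact you flag as the main obstacle, that supercuspidal representations have conductor exponent at least $2$, is actually already stated in the paper's subsection on supercuspidal representations, so no external citation is needed.
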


\begin{proof}
Such a representation cannot be a principal series representation, as the central character is trivial but the conductor exponent is not even, or a supercuspidal representation, as the conductor exponent is less than $2$. It remains to note that every special representation of conductor $\pp_v$ and trivial central character $\omega_v^2$ is of the desired form.
\end{proof}

\subsubsection{Supercuspidal Representations of \texorpdfstring{$\GL_2(F_v)$}{GL\9040\202(F\9035\145)}}

A supercuspidal representation is the compact induction to $\GL_2(F_v)$ of a finite-dimensional representation $\rho_{\pi_v}$ of a maximal open subgroup $H$ of $\GL_2(F_v)$ such that $H$ is compact modulo the centre $Z(F_v)$ of $\GL_2(F_v)$. Every maximal open subgroup of $\GL_2(F_v)$ that is compact modulo the centre is conjugate to either $Z(F_v) \GL_2(\OO_v)$ or $N K_0(\pp_v)$, the normaliser of $K_0(\pp_v)$ in $\GL_2(F_v)$. We say that a supercuspidal representation $\pi_v$ is of type I if $H$ is conjugate to $Z(F_v) \GL_2(\OO_v)$ and of type II if $H$ is conjugate to $N K_0(\pp_v)$. Supercuspidal representations always have conductor exponent $c(\pi_v)$ at least $2$. The twist $\pi_v \otimes \omega_v'$ of a supercuspidal representation $\pi_v$ by a character $\omega_v'$ of $F_v^{\times}$ is also a supercuspidal representation.

\begin{lemma}\label{supercuspidaltwist}
Let $\pi_v$ be a supercuspidal representation with trivial central character. Then the twist $\pi_v \otimes \beta_{v,\Quad}$ is a supercuspidal representation with trivial central character and conductor exponent $c(\pi_v \otimes \beta_{v,\Quad}) = c(\pi_v)$.
\end{lemma}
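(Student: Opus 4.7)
The statement has three parts: (i) $\pi_v \otimes \beta_{v,\Quad}$ is supercuspidal, (ii) its central character is trivial, (iii) its conductor exponent equals $c(\pi_v)$. Parts (i) and (ii) are essentially free. For (i), the paragraph immediately preceding the lemma already records that the twist of a supercuspidal by a character of $F_v^{\times}$ is supercuspidal. For (ii), the central character of $\pi_v \otimes \beta_{v,\Quad}$ is $\beta_{v,\Quad}^2 \omega_{\pi_v}$, which is trivial because $\beta_{v,\Quad}^2 = 1$ and $\omega_{\pi_v} = 1$.

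The real content is (iii). The plan is to obtain the equality by applying Lemma \ref{twistcondlemma} in both directions. Applied to $\pi_v$, the bound reads
\[
c\left(\pi_v \otimes \beta_{v,\Quad}\right) \leq \max\left\{c\left(\pi_v\right),\ c\left(\beta_{v,\Quad}\right) + c\left(\omega_{\pi_v}\right),\ 2 c\left(\beta_{v,\Quad}\right)\right\}.
\]
Since $\pi_v$ has trivial central character and $c(\beta_{v,\Quad}) = 1$, the right-hand side equals $\max\{c(\pi_v), 1, 2\}$. Because supercuspidal representations satisfy $c(\pi_v) \geq 2$, this maximum is $c(\pi_v)$, giving the upper bound $c(\pi_v \otimes \beta_{v,\Quad}) \leq c(\pi_v)$.

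For the reverse inequality I would exploit the involution $\beta_{v,\Quad}^2 = 1$: twisting $\pi_v \otimes \beta_{v,\Quad}$ again by $\beta_{v,\Quad}$ recovers $\pi_v$. Since $\pi_v \otimes \beta_{v,\Quad}$ is itself supercuspidal with trivial central character, Lemma \ref{twistcondlemma} applied to it yields
\[
c\left(\pi_v\right) = c\bigl((\pi_v \otimes \beta_{v,\Quad}) \otimes \beta_{v,\Quad}\bigr) \leq \max\left\{c\left(\pi_v \otimes \beta_{v,\Quad}\right),\ 1,\ 2\right\},
\]
and because $c(\pi_v \otimes \beta_{v,\Quad}) \geq 2$ (again, as a supercuspidal), the right-hand side is just $c(\pi_v \otimes \beta_{v,\Quad})$. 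Combining the two bounds gives the desired equality. There is no real obstacle here; the only subtlety worth flagging in the write-up is that one must invoke the supercuspidality of $\pi_v \otimes \beta_{v,\Quad}$ to guarantee $c(\pi_v \otimes \beta_{v,\Quad}) \geq 2$ before the symmetric step swallows the ``$2$'' in the maximum.
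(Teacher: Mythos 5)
Your proposal is correct and follows essentially the same route as the paper: the paper also derives the equality from a double application of Lemma \ref{twistcondlemma}, using that $\beta_{v,\Quad}^2 = 1$ and that supercuspidal representations have conductor exponent at least $2$ (the paper phrases the reverse inequality as a strong induction imported from Lemma \ref{principaltwist}, but the mechanism is identical). Your direct two-sided version, invoking the supercuspidality of $\pi_v \otimes \beta_{v,\Quad}$ to get $c(\pi_v \otimes \beta_{v,\Quad}) \geq 2$, is if anything a slightly cleaner write-up of the same argument.
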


\begin{proof}
It is clear that $\pi_v \otimes \beta_{v,\Quad}$ is a supercuspidal representation with trivial central character. The fact that the conductor remains unchanged follows from \hyperref[twistcondlemma]{Lemma \ref*{twistcondlemma}}: we have that
\begin{align*}
c\left(\pi_v\right) & = c\left(\left(\pi_v \otimes \beta_{v,\Quad}\right) \otimes \beta_{v,\Quad}\right)	\\
& \leq \max\left\{c\left(\pi_v \otimes \beta_{v,\Quad}\right), c\left(\beta_{v,\Quad}\right) + c(1), 2c\left(\beta_{v,\Quad}\right)\right\}	\\
& = c\left(\pi_v \otimes \beta_{v,\Quad}\right)	\\
& \leq \max\left\{c\left(\pi_v\right), c\left(\beta_{v,\Quad}\right) + c(1), 2c\left(\beta_{v,\Quad}\right)\right\}	\\
& = c\left(\pi_v\right).
\qedhere
\end{align*}
\end{proof}

\subsection{\texorpdfstring{$\left(\gl_2\left(F_v\right)_{\Cb},K_v\right)$}{(gl\9040\202(F\9035\145)C,K\9035\145)}-Modules}

Let $S_{\R}$ and $S_{\Cb}$ denote the real and complex places of $F$ respectively. For $v \in S_{\infty} = S_{\R} \cup S_{\Cb}$, a generic irreducible admissible unitarisable $(\gl_2(F_v)_{\Cb}, K_v)$-module $\pi_v$ corresponds to an infinitesimal equivalence class of principal series representations or discrete series representations, with the latter only possible if $v \in S_{\R}$.

\subsubsection{Principal Series Representations of \texorpdfstring{$\GL_2(\R)$}{GL\9040\202(R)}}

Let $v$ be a real place of $F$. Unitarisable principal series representations of $\GL_2(F_v)$ are of the form
\[\pi_v \cong \omega_{v,1} \boxplus \omega_{v,2}\]
for some characters $\omega_{v,1} = \sgn_v^{m_1} |\cdot|_v^{s_1}$, $\omega_{v,2} = \sgn_v^{m_2} |\cdot|_v^{s_2}$ of $\R^{\times}$, with $m_1, m_2 \in \Z/2\Z$ and either $s_1, s_2 \in i\R$, or $s_1 + s_2 \in i\R$ with $s_1 - s_2 \in (-1,1)$ and $m_1 = m_2$. Here $\sgn_v(x) \defeq x / |x|_v$ is the sign of an element $x \in F_v$, with $|x|_v \defeq \max\{x,-x\}$ the usual absolute value on $\R$. The central character of $\pi_v$ is
\[\omega_{\pi_v} = \omega_{v,1} \omega_{v,2} = \sgn_v^{m_1 + m_2} |\cdot|_v^{s_1 + s_2}.\]
If $\omega_v'$ is a character of $\R^{\times}$, the twist of $\pi_v$ by $\omega_v'$ is the principal series representation
\[\pi_v \otimes \omega_v' \cong \omega_{v,1} \omega_v' \boxplus \omega_{v,2} \omega_v'.\]
We define the weight $k_v \in \{0,1\}$ and spectral parameter $s_v \in i\R \cup (-1/2,1/2)$ of $\pi_v$ to be
\[k_v \defeq |m_1 - m_2|, \qquad s_v \defeq \frac{s_1 - s_2}{2};\]
note that the latter is defined up to multiplication by $\pm 1$ since $\omega_{v,1} \boxplus \omega_{v,2} \cong \omega_{v,2} \boxplus \omega_{v,1}$. The quadratic twist of $\pi_v$ by the unique quadratic character $\sgn_v$ of $\R^{\times}$ merely sends $(m_1,m_2)$ to $(m_1 + 1, m_2 + 1)$ in $\Z/2\Z$ while leaving $s_1,s_2$ unaltered. Thus the weight $k_v$ and spectral parameter $s_v$ remain unchanged. When $\pi_v$ has trivial central character, we have that $\omega_{v,2} = \omega_{v,1}^{-1}$, so that $m_2 = m_1$ and $s_2 = -s_1$. In this case, we have that $k_v = 0$ and $s_v = s_1 \in i\R \cup (-1/2,1/2)$.

\subsubsection{Discrete Series Representations of \texorpdfstring{$\GL_2(\R)$}{GL\9040\202(R)}}

A unitarisable discrete series representation of $\GL_2(F_v)$, $v \in S_{\R}$, is the unique irreducible subrepresentation
\[\pi_v \cong \DD\left(\omega_{v,1}, \omega_{v,2}\right)\]
of codimension one of the reducible principal series representation $\omega_{v,1} \boxplus \omega_{v,2}$ for some characters $\omega_{v,1} = \sgn_v^{m_1} |\cdot|_v^{s_1}$, $\omega_{v,2} = \sgn_v^{m_2} |\cdot|_v^{s_2}$ of $\R^{\times}$ with $m_1, m_2 \in \Z/2\Z$, and $s_1, s_2 \in \Cb$ such that $s_1 + s_2 \in i\R$, $s_1 - s_2 \in \Z \setminus \{0\}$, and $s_1 - s_2 + 1 \equiv m_1 - m_2 \pmod{2}$. The representation $\DD(\omega_{v,1}, \omega_{v,2})$ is isomorphic to $\DD(\omega_{v,2}, \omega_{v,1})$, $\DD(\sgn_v \omega_{v,1}, \sgn_v \omega_{v,2})$, and $\DD(\sgn_v \omega_{v,2}, \sgn_v \omega_{v,1})$. The central character of $\pi_v$ is again
\[\omega_{\pi_v} = \omega_{v,1} \omega_{v,2} = \sgn_v^{m_1 + m_2} |\cdot|_v^{s_1 + s_2}.\]
If $\omega_v'$ is a unitary character of $\R^{\times}$, the twist of $\pi_v$ by $\omega_v'$ is the discrete series representation
\[\pi_v \otimes \omega_v' \cong \DD\left(\omega_{v,1} \omega_v', \omega_{v,2} \omega_v'\right).\]
We define the weight $k_v \in \N \setminus \{1\}$ and spectral parameter $s_v$ of $\pi_v$ to be
\[k_v \defeq |s_1 - s_2| + 1, \qquad s_v \defeq \frac{s_1 - s_2}{2},\]
with the latter defined up to multiplication by $\pm 1$; note that the weight of a discrete series representation is defined differently to the weight of a principal series representation. The quadratic twist of $\pi_v$ by $\sgn_v$ is isomorphic to $\pi_v$, and in particular leaves the weight $k_v$ and spectral parameter $s_v$ unchanged. Again, when $\pi_v$ has trivial central character, we have that $\omega_{v,2} = \omega_{v,1}^{-1}$, so that $m_2 = m_1$ and $s_2 = -s_1$. In this case, we have that $k_v = 2|s_1| + 1 \in 2\N$ and $s_v = s_1 \in 1/2 + \Z$.

\subsubsection{Principal Series Representations of \texorpdfstring{$\GL_2(\Cb)$}{GL\9040\202(C)}}

A unitarisable principal series representation of $\GL_2(F_v)$ with $v \in S_{\Cb}$ is of the form $\pi_v \cong \omega_{v,1} \boxplus \omega_{v,2}$ for some characters $\omega_{v,1} = e^{i m_1 \arg_v} |\cdot|_v^{s_1}$, $\omega_{v,2} = e^{i m_2 \arg_v} |\cdot|_v^{s_2}$ of $\Cb^{\times}$, with $m_1, m_2 \in \Z$ and either $s_1, s_2 \in i\R$, or $s_1 + s_2 \in i\R$ with $s_1 - s_2 \in (-1,1)$ and $m_1 = m_2$. Here $\arg_v(z)$ is the argument of an element $z \in F_v$, so that $e^{i \arg_v(z)} \defeq z / |z|_v^{1/2}$ with $|z|_v \defeq z \overline{z}$ the square of the usual absolute value on $\Cb$. If $\omega_v'$ is a character of $\R^{\times}$, the twist of $\pi_v$ by $\omega_v'$ is the principal series representation
\[\pi_v \otimes \omega_v' \cong \omega_{v,1} \omega_v' \boxplus \omega_{v,2} \omega_v'.\]
We again define the weight $k_v \in \N \cup \{0\}$ and spectral parameter $s_v \in i\R \cup (-1/2,1/2)$ of $\pi_v$ to be
\[k_v \defeq |m_1 - m_2|, \qquad s_v \defeq \frac{s_1 - s_2}{2},\]
with the latter again defined up to multiplication by $\pm 1$ as $\omega_{v,1} \boxplus \omega_{v,2} \cong \omega_{v,2} \boxplus \omega_{v,1}$. Note that there is no quadratic character of $\Cb^{\times}$. When $\pi_v$ has trivial central character, we have that $\omega_{v,2} = \omega_{v,1}^{-1}$, so that $m_2 = -m_1$ and $s_2 = -s_1$. In this case, we have that $k_v = 2|m_1| \in 2\N \cup \{0\}$ and $s_v = s_1 \in i\R \cup (-1/2,1/2)$.

\section{Proof of \texorpdfstring{\hyperref[twistthmclassical]{Theorem \ref*{twistthmclassical}}}{Theorem \ref{twistthmclassical}}}\label{prooftwistthmsect}

A cuspidal automorphic representation $\pi$ of $\GL_2(\A_F)$ is said to have archimedean weight and spectral parameter vectors $(\kbf,\sbf) = (k_v,s_v)_{v \in S_{\infty}}$ if at each archimedean place $v$ of $F$, the local component $\pi_v$ of $\pi$ has weight $k_v$ and spectral parameter $s_v$. We let $\Xx(K_0(\qq),\kbf,\sbf)$ denote the set of isomorphism classes of cuspidal automorphic representations of $\GL_2(\A_F)$ of arithmetic conductor $\qq$, trivial central character, and archimedean weight and spectral parameter vectors $(\kbf,\sbf)$, while $\Xx(K_0(\qq),\kbf,\sbf) \otimes \omega_{\pp} \defeq \left\{\pi \otimes \omega_{\pp} : \pi \in \Xx(K_0(\qq),\kbf,\sbf)\right\}$.

\begin{theorem}\label{twistthmadelic}
Let $\qq$ be an integral ideal of $\OO_F$ divisible by a nondyadic prime ideal $\pp$. Let $\omega_{\pp}$ denote a quadratic Hecke character of $F^{\times} \backslash \A_F^{\times}$ that is ramified only at $\pp$. Then if $\pp \parallel \qq$,
\begin{equation}\label{pexactly}
\Xx\left(K_0(\qq),\kbf,\sbf\right) \cap \left(\Xx\left(K_0(\qq),\kbf,\sbf\right) \otimes \omega_{\pp}\right) = \emptyset.
\end{equation}
If $\pp^2 \parallel \qq$,
\begin{multline}\label{psquared}
\Xx\left(K_0(\qq),\kbf,\sbf\right) \cap \left(\Xx\left(K_0(\qq),\kbf,\sbf\right) \otimes \omega_{\pp}\right)	\\
= \Xx\left(K_0(\qq),\kbf,\sbf\right) \setminus \left(\left(\Xx\left(K_0\left(\qq \pp^{-1}\right),\kbf,\sbf\right) \otimes \omega_{\pp}\right) \cup \left(\Xx\left(K_0\left(\qq \pp^{-2}\right),\kbf,\sbf\right) \otimes \omega_{\pp}\right)\right).
\end{multline}
If $\pp^3 \mid \qq$,
\begin{equation}\label{pcubed}
\Xx\left(K_0(\qq),\kbf,\sbf\right) \cap \left(\Xx\left(K_0(\qq),\kbf,\sbf\right) \otimes \omega_{\pp}\right) = \Xx\left(K_0(\qq),\kbf,\sbf\right).
\end{equation}
That is, when twisting automorphic representations in $\Xx(K_0(\qq),\kbf,\sbf)$ by $\omega_{\pp}$:
\begin{itemize}
	\item If $\pp \parallel \qq$, no representation has its arithmetic conductor unchanged.
	\item If $\pp^2 \parallel \qq$, the representations whose arithmetic conductors are changed are those that are twists of representations of arithmetic conductor $\qq \pp^{-1}$ or $\qq \pp^{-2}$.
	\item If $\pp^3 \mid \qq$, every representation has its arithmetic conductor unchanged.
\end{itemize}
\end{theorem}

The special case $F = \Q$ and $k_{\infty} = 0$ of \hyperref[twistthmadelic]{Theorem \ref*{twistthmadelic}} is \hyperref[twistthmclassical]{Theorem \ref*{twistthmclassical}} by the correspondence between newforms and cuspidal automorphic representations, with the archimedean weight and spectral parameter $\left(k_{\infty}, s_{\infty}\right)$ specifying the weight $k_{\infty} = 0$ and the Laplacian eigenvalue $\lambda = 1/4 - s_{\infty}^2$ of the associated newform. When $F = \Q$ and $k_{\infty} \in 2\N$, on the other hand, \hyperref[twistthmadelic]{Theorem \ref*{twistthmadelic}} reduces to a result of Atkin and Lehner \cite[Theorem 6]{Atkin1} that characterises the level of a holomorphic newform when twisted by a quadratic Dirichlet character of odd prime conductor; their proof is via classical methods, and with a little effort can be modified to prove the analogous result for Maa\ss{} newforms, namely \hyperref[twistthmclassical]{Theorem \ref*{twistthmclassical}}. On the other hand, such minor modifications do not seem to be possible to extend their work to remain valid for modular forms over an arbitrary number field $F$, as in \hyperref[twistthmadelic]{Theorem \ref*{twistthmadelic}}.

\begin{proof}[Proof of \texorpdfstring{\hyperref[twistthmadelic]{Theorem \ref*{twistthmadelic}}}{Theorem \ref{twistthmadelic}}]
Let $\pi$ be a cuspidal automorphic representation in $\Xx\left(K_0(\qq),\kbf,\sbf\right)$. As the archimedean weights and spectral parameters of $\pi$ and the conductor at unramified places are fixed under twisting by a quadratic character whose arithmetic conductor divides the arithmetic conductor of $\pi$, it suffices to determine when $c(\pi_v \otimes \beta_{v,\Quad}) = c(\pi_v)$, where $v$ is the nondyadic nonarchimedean place for which $\pp_v \cap \OO_F = \pp$ and $\pi_v$ is the local component of $\pi$ at $v$.
\begin{itemize}
\item If $\pp \parallel \qq$, the local component $\pi_v$ is a special representation $|\cdot|_v^{s_{\Quad}} \St_v$ of conductor exponent $c(\pi_v) = 1$ by \hyperref[specialpv]{Lemma \ref*{specialpv}}, and its twist by $\beta_{v,\Quad}$ has conductor exponent $c(\pi_v \otimes \beta_{v,\Quad}) = 2$ by \hyperref[specialtwist]{Lemma \ref*{specialtwist}}. This proves \eqref{pexactly}.
\item If $\pp^3 \mid \qq$, the local component $\pi_v$ is either a principal series representation or a supercuspidal representation, with the former only possible when $\pp^m \parallel \qq$ with $m \in 2\N$.
\begin{itemize}
\item If $\pi_v$ is a principal series representation, then $\pi_v \cong \beta_v |\cdot|_v^s \boxplus \beta_v^{-1} |\cdot|_v^{-s}$ for some $\beta_v$ for which $c(\beta_v) \geq 2$, so that $c(\pi_v) = 2c(\beta_v) \geq 4$. Moreover, $\beta_v \neq \beta_{v,\Quad}$ since $c(\beta_{v,\Quad}) = 1$, and so $c(\pi_v \otimes \beta_{v,\Quad}) = c(\pi_v)$ by \hyperref[principaltwist]{Lemma \ref*{principaltwist}}.
\item If $\pi_v$ is a supercuspidal representation, then \hyperref[supercuspidaltwist]{Lemma \ref*{supercuspidaltwist}} implies that $c(\pi_v \otimes \beta_{v,\Quad}) = c(\pi_v)$
\end{itemize}
This proves \eqref{pcubed}.
\item If $\pp^2 \parallel \qq$, the local component $\pi_v$ can either be a principal series representation, a special representation, or a supercuspidal representation.
\begin{itemize}
\item If $\pi_v$ is a principal series representation, then $\pi_v \cong \beta_v |\cdot|_v^s \boxplus \beta_v^{-1} |\cdot|_v^{-s}$ for some $\beta_v$ for which $c(\beta_v) = 1$, so that $c(\pi_v) = 2c(\beta_v) = 2$. If $\beta_v = \beta_{v,\Quad}$, then $\pi_v$ is the twist by $\beta_{v,\Quad}$ of the unramified principal series representation $|\cdot|_v^s \boxplus |\cdot|_v^{-s}$ by \hyperref[principaltwist]{Lemma \ref*{principaltwist}}, and otherwise the conductor of $\pi_v$ remains $\pp_v^2$ upon twisting by $\beta_{v,\Quad}$. This implies that $\Xx\left(K_0\left(\qq \pp^{-2}\right),\kbf,\sbf\right) \otimes \omega_{\pp}$ lies in $\Xx\left(K_0(\qq),\kbf,\sbf\right)$ but not in $\Xx\left(K_0(\qq),\kbf,\sbf\right) \otimes \omega_{\pp}$.
\item If $\pi_v$ is a special representation, then $\pi_v$ is the twist by $\beta_{v,\Quad}$ of a special representation $|\cdot|_v^{s_{\Quad}} \St_v$ of conductor $\pp_v$ and trivial central character by \hyperref[specialtwist]{Lemma \ref*{specialtwist}}, and so $\Xx\left(K_0\left(\qq \pp^{-1}\right),\kbf,\sbf\right) \otimes \omega_{\pp}$ lies in $\Xx\left(K_0(\qq),\kbf,\sbf\right)$ but not in $\Xx\left(K_0(\qq),\kbf,\sbf\right) \otimes \omega_{\pp}$.
\item Finally, when $\pi_v$ is a supercuspidal representation of conductor $\pp_v^2$, \hyperref[supercuspidaltwist]{Lemma \ref*{supercuspidaltwist}} shows that the twist of $\pi_v$ by $\beta_{v,\Quad}$ also has conductor $\pp_v^2$.
\end{itemize}
The identity \eqref{psquared} then follows.
\end{itemize}
\end{proof}

We can extend this result to classify cuspidal automorphic representations having trivial central character that are arithmetic conductor-invariant under twisting by a quadratic Hecke character of composite arithmetic conductor, albeit with the restriction that the quadratic Hecke character is unramified at every dyadic place.

\begin{theorem}\label{twistqthm}
Let $\qq$ and $\rr$ be integral ideals of $\OO_F$ such that $\rr \neq \OO_F$ is squarefree and every prime ideal $\pp$ dividing $\rr$ is nondyadic. Let $\omega_{\rr}$ be a quadratic Hecke character of arithmetic conductor $\rr$. Then if $\rr^2$ does not divide $\qq$, we have that
\[\left(\Xx\left(K_0(\qq),\kbf,\sbf\right) \otimes \omega_{\rr}\right) \cap \Xx\left(K_0(\qq),\kbf,\sbf\right) = \emptyset,\]
while if $\rr^2$ divides $\qq$,
\begin{multline*}
\Xx\left(K_0(\qq),\kbf,\sbf\right) \cap \left(\Xx\left(K_0(\qq),\kbf,\sbf\right) \otimes \omega_{\rr}\right)	\\
= \left\{\pi \in \Xx\left(K_0(\qq),\kbf,\sbf\right) : \text{for all $v \in S_f$ with $\pp_v \cap \OO_F \mid \rr$ and $\left(\pp_v \cap \OO_F\right)^2 \parallel \qq$,} \phantom{\left\{|\cdot|_v^{\frac{\pi i}{\log q_v}}\right\}} \hspace{-1cm}\right.	\\
\left.\pi_v \ncong \beta_{v,\Quad} |\cdot|_v^{s_{\Quad}} \boxplus \beta_{v,\Quad} |\cdot|_v^{s_{\Quad}}, \ \pi_v \ncong \beta_{v,\Quad} |\cdot|_v^{s_{\Quad}} \St_v \right.	\\
\left. \text{ with $s_{\Quad} \in \left\{0, \frac{\pi i}{\log q_v}\right\}$}\right\}.
\end{multline*}
Moreover, if $\rr' \neq \OO_F$ is a prime ideal of $\OO_F$ dividing $\rr$ and $\omega_{\rr'}$ is a quadratic Hecke character of arithmetic conductor $\rr'$, then
\[\left(\Xx\left(K_0(\qq),\kbf,\sbf\right) \otimes \omega_{\rr'}\right) \cap \Xx\left(K_0(\qq),\kbf,\sbf\right) \supset \left(\Xx\left(K_0(\qq),\kbf,\sbf\right) \otimes \omega_{\rr}\right) \cap \Xx\left(K_0(\qq),\kbf,\sbf\right).\]
\end{theorem}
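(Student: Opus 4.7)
The plan is to reduce the problem to a local analysis at each nonarchimedean place, invoking Theorem \ref{twistthmadelic} place-by-place. An isomorphism class of cuspidal automorphic representations $\pi$ of $\GL_2(\A_F)$ is determined by its local components $\pi_v$, and the local components of $\pi \otimes \omega_{\Quad(\qq')}$ are $\pi_v \otimes \omega_v$, where $\omega_{\Quad(\qq')} = \prod_v \omega_v$. Since $\omega_{\Quad(\qq')}$ is quadratic and $\pi$ has trivial central character, so does $\pi \otimes \omega_{\Quad(\qq')}$. The archimedean weight and spectral parameter of $\pi_v$ are preserved under quadratic twist, as recorded in the background discussion on $(\gl_2(F_v), K_v)$-modules. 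Membership of $\pi \otimes \omega_{\Quad(\qq')}$ in $\Xx(\Gamma_0(\qq),\kbf,\sbf)$ therefore reduces to the nonarchimedean condition $c(\pi_v \otimes \omega_v) = c(\pi_v)$ at every $v \in S_f$.

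At a finite place $v$ with $\pp_v \cap \OO_F \nmid \qq'$, the character $\omega_v$ is unramified and quadratic, and twisting by any unramified character preserves the conductor exponent, so the condition holds automatically. The content therefore lies at the finite set of places $v$ corresponding to prime divisors $\pp \mid \qq'$, each nondyadic by hypothesis. At such a place $\omega_v$ is ramified quadratic, hence $\omega_v \in \{\beta_{v,\Quad}, \beta_{v,\Quad}|\cdot|_v^{\pi i/\log q_v}\}$, and the unramified factor $|\cdot|_v^{\pi i/\log q_v}$ does not affect the conductor, so the condition reduces to $c(\pi_v \otimes \beta_{v,\Quad}) = c(\pi_v)$. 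This is precisely the local question addressed by Theorem \ref{twistthmadelic}.

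Applying Theorem \ref{twistthmadelic} at each $\pp \mid \qq'$: if some $\pp \mid \qq'$ has $\pp \parallel \qq$, then case \eqref{pexactly} shows that the local condition at the corresponding $v$ is never satisfied, forcing the intersection to be empty. Otherwise every $\pp \mid \qq'$ satisfies $\pp^2 \mid \qq$; at primes with $\pp^3 \mid \qq$ case \eqref{pcubed} gives the local condition for every admissible $\pi_v$, imposing no restriction, while at primes with $\pp^2 \parallel \qq$ case \eqref{psquared} excludes precisely those $\pi_v$ obtained as a $\beta_{v,\Quad}$-twist of a representation of smaller conductor. Unpacking the latter via Lemmata \ref{principaltwist}, \ref{specialtwist}, and \ref{supercuspidaltwist} identifies the excluded local representations with the explicit list of principal series $\omega_v \boxplus \omega_v$ and special representations $\omega_v \St_v$ in the statement, giving the claimed equality.

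The monotonicity assertion follows immediately from the same local viewpoint: twisting by $\omega_{\Quad(\qq^*)}$ with $\qq^* \mid \qq'$ prime imposes the local conductor condition only at the single place above $\qq^*$, whereas twisting by $\omega_{\Quad(\qq')}$ imposes it simultaneously at every place above a prime divisor of $\qq'$, so any $\pi$ in the intersection for $\qq'$ lies in the intersection for $\qq^*$. The main bookkeeping obstacle is the $\pp^2 \parallel \qq$ analysis, where one must match the description of the excluded local representations arising from Theorem \ref{twistthmadelic} with the explicit classification of generic irreducible admissible unitarisable $\pi_v$ of conductor $\pp_v^2$ and trivial central character; once this dictionary is in hand, the rest is routine.
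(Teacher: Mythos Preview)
Your approach is essentially the same as the paper's: the paper's proof is a single sentence stating that the result follows directly from the \emph{proof} of Theorem~\ref{twistthmadelic} by classifying when $c(\pi_v \otimes \beta_{v,\Quad}) = c(\pi_v)$ at each place $v$ with $\pp_v \cap \OO_F \mid \qq'$, which is exactly the local reduction you carry out in more detail.

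One small gap: in the first clause you write ``if some $\pp \mid \qq'$ has $\pp \parallel \qq$, then case~\eqref{pexactly} shows~\ldots'', but the hypothesis of the theorem is the weaker $\pp^2 \nmid \qq$, which also allows $\pp \nmid \qq$. Theorem~\ref{twistthmadelic} assumes $\pp \mid \qq$ and so does not literally cover this case. It is of course immediate: if $\pp \nmid \qq$ then $\pi_v$ is unramified, hence $\pi_v \cong |\cdot|_v^s \boxplus |\cdot|_v^{-s}$, and Lemma~\ref{principaltwist} (read in the other direction) gives $c(\pi_v \otimes \beta_{v,\Quad}) = 2 \neq 0$. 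You should add this one-line observation so that the case split is complete.
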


\begin{proof}
This follows directly from the proof of \hyperref[twistthmadelic]{Theorem \ref*{twistthmadelic}} by classifying when $c(\pi_v \otimes \beta_{v,\Quad}) = c(\pi_v)$ for each nonarchimedean place $v$ for which $\pp_v \cap \OO_F$ divides $\rr$.
\end{proof}

\section{Counting Monomial Representations}\label{monomialsect}

Let $E$ be a quadratic extension of $F$. Via global class field theory, there is a quadratic Hecke character $\omega_{E/F}$ of $F^{\times} \backslash \A_F^{\times}$ associated to $E/F$; its arithmetic conductor is $\dd_{E/F}$, the discriminant of $E/F$. Conversely, any quadratic Hecke character $\omega_{\rr}$ of arithmetic conductor $\rr$ corresponds to a quadratic extension $E/F$ with discriminant $\dd_{E/F} = \rr$.

Recall that an automorphic representation $\pi$ of $\GL_2(\A_F)$ is said to be automorphically induced from a Hecke character $\chi$ of $E^{\times} \backslash \A_E^{\times}$ if $L(s,\chi) = L(s,\pi)$. By the work of Jacquet and Langlands \cite[\S 12]{Jacquet} (see also \cite{Shalika}), given a Hecke character $\chi$ of $E^{\times} \backslash \A_E^{\times}$, there exists a unique automorphic representation $\pi = \pi(\chi)$ of $\GL_2(\A_F)$ that is automorphically induced from $\chi$. Moreover, $\pi(\chi)$ is cuspidal if and only if $\chi$ does not factor through the global norm map $N_{\A_E / \A_F} : \A_E^{\times} \to \A_F^{\times}$, and $\pi(\chi)$ satisfies $\pi(\chi) \otimes \omega_{E/F} \cong \pi(\chi)$. The automorphically induced representation $\pi(\chi)$ is said to be monomial with respect to $\omega_{E/F}$. For $F = \Q$, this result is due to Hecke when $E$ is imaginary \cite{Hecke} and Maa\ss{} when $E$ is real \cite{Maass}.

This is the most basic form of automorphic induction. We require a converse result. Let $\pi$ be a cuspidal automorphic representation of $\GL_2(\A_F)$, and suppose that there exists some nontrivial Hecke character $\omega$ of $F^{\times} \backslash \A_F^{\times}$ such that $\pi \otimes \omega \cong \pi$. By comparing central characters, the Hecke character $\omega$ must be quadratic, and so corresponds to a quadratic extension $E$ of $F$ via class field theory.

\begin{theorem}\label{autoinductconvthm}
Let $\pi$ be a cuspidal automorphic representation of $\GL_2(\A_F)$ such that there exists a quadratic extension $E$ of $F$ for which $\pi \otimes \omega_{E/F} \cong \pi$. Then $\pi$ is monomial with respect to $\omega_{E/F}$, so that $\pi$ is automorphically induced from a Hecke character $\chi$ of $E^{\times} \backslash \A_E^{\times}$.
\end{theorem}

\begin{proof}
This is the converse to automorphic induction for cuspidal automorphic representations of $\GL_1\left(\A_E\right)$ (that is, for Hecke characters), and is a result of Labesse and Langlands \cite[Proposition 6.5]{Labesse}.
\end{proof}

\begin{lemma}\label{monomialpropertieslemma}
A monomial representation $\pi(\chi)$ with respect to $\omega_{E/F}$ has central character $\omega_{\pi(\chi)} = \omega_{E/F} \chi |_{\A_F^{\times}}$ and arithmetic conductor $\qq = N_{E/F}(\Ff) \dd_{E/F}$, where $N_{E/F} : E^{\times} \to F^{\times}$ denotes the norm map, and the integral ideal $\Ff$ of $\OO_E$ is the arithmetic conductor of $\chi$. In particular, $\dd_{E/F}$ divides $\qq$. Moreover, the arithmetic conductor of $\chi |_{\A_F^{\times}}$ is $N_{E/F}(\Ff)$.
\end{lemma}

\begin{proof}
The fact that $\qq = N_{E/F}(\Ff) \dd_{E/F}$ reduces to a local identification of conductors; this local calculation is proven in \cite[Corollary to Proposition 4 of Chapter VI]{Serre}.

To see that $\omega_{\pi(\chi)} = \omega_{E/F} \chi |_{\A_F^{\times}}$, we compare the local $L$-functions. We have that
\[\frac{1}{1 - \lambda_{\pi(\chi)_v}(\varpi_v) q_v^{-s} + \omega_{\pi(\chi_v)}(\varpi_v) q_v^{-2s}} = L(s,\pi(\chi)_v)\]
for any $v \nmid \dd_{E/F}$, where $\lambda_{\pi(\chi)_v}(\varpi_v)$ is the Hecke eigenvalue of $\pi(\chi)_v$. If $v$ splits in $E$, so that there exist two places $w_1,w_2$ lying over $v$ with $\varpi_v = \varpi_{w_1} = \varpi_{w_2}$ and $q_v = q_{w_1} = q_{w_2}$, then this local $L$-function is equal to
\[L(s,\chi_{w_1}) L(s,\chi_{w_2}) = \frac{1}{\left(1 - \chi_{w_1}(\varpi_v) q_v^{-s}\right) \left(1 - \chi_{w_2}(\varpi_v) q_v^{-s}\right)},\]
while if $v$ is inert in $E$, so that a single place $w$ lies over $v$ with $\varpi_v = \varpi_w$ and $q_v^2 = q_w$, this local $L$-function is equal to
\[L(s,\chi_w) = \frac{1}{1 - \chi_w(\varpi_v) q_v^{-2s}}.\]
Comparing coefficients of $q_v^{-2s}$ shows that $\omega_{\pi(\chi)}$ agrees with $\omega_{E/F} \chi |_{\A_F^{\times}}$ at all places not dividing $\qq$, for the local component of $\chi |_{\A_F^{\times}}$ at a place $v$ of $F$ is $\prod_{w \mid v} \chi_w |_{F_v^{\times}}$, and strong multiplicity one implies the equality of these Hecke characters at all places.

Finally, the identity for the arithmetic conductor of $\chi |_{\A_F^{\times}}$ reduces to the fact that if $\chi_w$ has conductor $\Pp_w^{c(\chi_w)}$, where $\Pp_w$ is the maximal ideal of $\OO_w$, then the conductor of $\chi_w |_{F_v^{\times}}$ is $N_{E_w / F_v}(\Pp_w^{c(\chi_w)})$, which follows from the definition of the conductor.
\end{proof}

Given a monomial cuspidal automorphic representation $\pi$ with trivial central character, let $\chi = \prod_w \chi_w$ be the Hecke character of $E^{\times} \backslash \A_E^{\times}$ from which $\pi$ is automorphically induced, where the product is over the places $w$ of $E$; we will write $S_f(E), S_{\infty}(E), S_{\R}(E), S_{\Cb}(E)$ to denote the nonarchimedean, archimedean, real, and complex places of $E$ respectively. Note that as $\chi |_{\A_F^{\times}} = \omega_{E/F}$, $N_{E/F}(\Ff)$ is equal to $\dd_{E/F}$, and hence $\qq$ is equal to $\dd_{E/F}^2$. By the work of Jacquet and Langlands, we may describe the relation between $\pi$ and $\chi$ at the archimedean places of $F$ as follows.

If $v \in S_{\R}(F)$ splits in $E$, we have that $\pi_v \cong \chi_{w_1} \boxplus \chi_{w_2}$ for the two places $w_1,w_2 \in S_{\R}(E)$ lying over $v$, with $\chi_{w_1} = \sgn_{w_1}^{m_{w_1}} |\cdot|_{w_1}^{s_{w_1}}$, $\chi_{w_2} = \sgn_{w_2}^{m_{w_2}} |\cdot|_{w_2}^{s_{w_2}}$ for some $m_{w_1}, m_{w_2} \in \Z/2\Z$ and $s_{w_1}, s_{w_2} \in i\R$; as $\pi_v$ has trivial central character, we have that $m_{w_2} = m_{w_1}$ and $s_{w_2} = -s_{w_1}$, so that the weight and spectral parameter of $\pi_v$ are $k_v = 0$ and $s_v = s_{w_1} \in i\R$.

If $v \in S_{\R}(F)$ ramifies in $E$, so that there exists one complex place $w \in S_{\Cb}(E)$ lying over $v$, we have that $\pi_v \cong \DD\left(\omega_{v,1},\omega_{v,2}\right)$ with $\omega_{v,1}, \omega_{v,2}$ characters of $F_v^{\times}$ such that $\DD\left(\omega_{v,1},\omega_{v,2}\right)$ corresponds to the character $\chi_w = e^{i m_w \arg_w} |\cdot|_w^{s_w}$ of $E_w^{\times}$, with $s_w = 0$, and $k_v = |m_w| + 1 \in 2\N$ and $s_v = m_w/2 \in 1/2 + \Z$ the weight and spectral parameter of $\pi_v$ respectively.

Finally, every complex place $v$ of $F$ splits in $E$, so that for the two places $w_1, w_2 \in S_{\Cb}(E)$ lying over $v$, we have that $\pi_v \cong \chi_{w_1} \boxplus \chi_{w_2}$ with $\chi_{w_1} = e^{i m_{w_1} \arg_{w_1}} |\cdot|_{w_1}^{s_{w_1}}$, $\chi_{w_2} = e^{i m_{w_2} \arg_{w_2}} |\cdot|_{w_2}^{s_{w_2}}$ for some $m_{w_1}, m_{w_2} \in \Z$ and $s_{w_1}, s_{w_2} \in i\R$. We have that $m_{w_2} = -m_{w_1}$ and $s_{w_2} = -s_{w_1}$, so that the weight and spectral parameter of $\pi_v$ are $k_v = 2|m_{w_1}| \in 2\N \cup \{0\}$ and $s_v = s_{w_1} \in i\R$.

Now fix a subset $S_{\R}^{\ps}(F)$ of the real places $S_{\R}(F)$ of $F$, and write $S_{\R}^{\ds}(F) \defeq S_{\R}(F) \setminus S_{\R}^{\ps}(F)$ and $S_{\infty}^{\ps}(F) \defeq S_{\R}^{\ps}(F) \cup S_{\Cb}(F)$. Let $\omega_{\rr}$ be a quadratic Hecke character of $F^{\times} \backslash \A_F^{\times}$ of arithmetic conductor $\rr$, and suppose that $\pi$ is a cuspidal automorphic representation  of $\GL_2(\A_F)$ such that $\pi_v$ is a principal series representation if $v \in S_{\infty}^{\ps}(F)$ and a discrete series representation if $v \in S_{\R}^{\ds}(F)$. Then $\pi$ can only be monomial with respect to $\omega_{\rr}$ if the quadratic extension $E$ of $F$ of discriminant $\dd_{E/F} = \rr$ associated to $\omega_{\rr}$ via class field theory is such that an archimedean place $v$ of $F$ splits in $E$ if and only if $v \in S_{\infty}^{\ps}(F)$.

We wish to count cuspidal automorphic representations of $\GL_2(\A_F)$ that are monomial with respect to $\omega_{\rr}$, have trivial central character, arithmetic conductor $\qq$, even archimedean weight vector $\kbf$, and principal series representations at a nonempty subset $S_{\infty}^{\ps}(F)$ of the archimedean places of $F$ (and therefore having discrete series representations at the remaining archimedean places $S_{\R}^{\ds}(F)$ of $F$). We let
\[\Xx\left(K_0(\qq),\kbf,S_{\infty}^{\ps}\right)_{\mon(\omega_{\rr})}\]
denote the set of all such isomorphism classes of cuspidal automorphic representations.

\begin{proposition}\label{monomialcountprop}
Let $S_{\infty}^{\ps} \subset S_{\infty}$ be nonempty and let $\kbf$ be a fixed even archimedean weight vector. Let $\qq$ and $\rr$ be integral ideals of $\OO_F$ such that $\rr^2$ divides $\qq$ and every prime ideal $\pp$ dividing $\rr$ is nondyadic, and let $\omega_{\rr}$ be a quadratic Hecke character of arithmetic conductor $\rr$. We have that
\begin{equation}\label{monomialcount}
\# \left\{\pi \in \Xx\left(K_0(\qq),\kbf,S_{\infty}^{\ps}\right)_{\mon(\omega_{\rr})} : \left|\sbf_{\ps}(\pi)\right| \leq \Tbf_{\ps}\right\} \ll_{F,\qq,\kbf} \prod_{v \in S_{\infty}^{\ps}(F)} T_v.
\end{equation}
Here we write $\left|\sbf_{\ps}(\pi)\right| \leq \Tbf_{\ps}$ to mean that $\pi$ has archimedean principal series spectral parameters $s_v\left(\pi_v\right)$ satisfying $\left|s_v\left(\pi_v\right)\right| \leq T_v$ for all $v \in S_{\infty}^{\ps}(F)$, and the implicit constant in the error term depends on $F$, $\qq$, and $\kbf$. 
\end{proposition}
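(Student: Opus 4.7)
The plan is to invoke the converse to automorphic induction (Theorem \ref{autoinductconvthm}) to reduce the count of monomial representations to a count of Hecke characters on the associated quadratic extension $E$, and then to bound the latter via lattice-point counting in the archimedean parameter space. Via class field theory, $\omega_{\Quad(\qq')}$ corresponds to a quadratic extension $E/F$ with $\dd_{E/F} = \qq'$, and $E$ is determined by $F$ and $\qq$. By Theorem \ref{autoinductconvthm}, every $\pi \in \Xx(\Gamma_0(\qq),\kbf,S_\infty^{\ps})_{\mon(\omega_{\Quad(\qq')})}$ is of the form $\pi(\chi)$ for a Hecke character $\chi$ of $E^\times \backslash \A_E^\times$ of arithmetic conductor $\Ff$ with $N_{E/F}(\Ff) = \qq (\qq')^{-1}$ and $\chi|_{\A_F^\times} = \omega_{\Quad(\qq')}$ (the latter because $\omega_\pi$ is trivial). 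Since the fibre of $\chi \mapsto \pi(\chi)$ has size at most two (given by the involution $\chi \mapsto \chi^\sigma$, where $\sigma$ generates $\Gal(E/F)$), it suffices to bound the number of admissible $\chi$.

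Next I catalogue the constraints on $\chi$. The ideal $\Ff$ takes one of $O_{F,\qq}(1)$ values, since its norm $\qq(\qq')^{-1}$ is pinned. At each $v \in S_{\R}^{\ds}(F)$, $v$ ramifies in $E$ with unique complex $w$ above, and the equalities $\chi_w|_{F_v^\times} = \sgn_v$ and $k_v = |m_w|+1$ force $s_w = 0$ and $|m_w| = k_v - 1$. At each $v \in S_\infty^{\ps}(F)$, $v$ splits as $w_1 w_2$, and $(\chi_{w_1}\chi_{w_2})|_{F_v^\times} = 1$ together with the weight condition at $v$ forces $s_{w_2} = -s_{w_1}$, constrains $(m_{w_1},m_{w_2})$ to one of $O_\kbf(1)$ configurations, and leaves $s_{w_1} \in i\R$ with $|s_{w_1}| \leq T_v$ as the only continuous parameter. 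Fixing $\Ff$ and the discrete archimedean data, the set of admissible $\chi$'s becomes a coset of a subgroup of the Pontryagin dual of $\A_E^\times / E^\times \A_F^\times$; under the map extracting the continuous archimedean parameters $(s_{w_1})_{v \in S_\infty^{\ps}(F)}$, this subgroup becomes a lattice in $i\R^{|S_\infty^{\ps}(F)|}$.

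By Dirichlet's unit theorem applied to $E$ and $F$, $\OO_E^\times/\OO_F^\times$ modulo torsion has rank $r_1(E)+r_2(E)-r_1(F)-r_2(F) = |S_\infty^{\ps}(F)|$, and its image under the logarithm map has nonzero regulator depending only on $E$, and hence only on $F$ and $\qq$. The lattice of admissible $\chi$'s is Pontryagin dual to this unit lattice, up to a finite index bounded in terms of $F$ and $\qq$ accounting for the conductor and the principal congruence subgroup $U_\Ff$ of $\prod_{w \in S_f(E)} \OO_w^\times$; in particular, it has full rank $|S_\infty^{\ps}(F)|$ with covolume bounded below in terms of $F$ and $\qq$. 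Standard lattice-point counting then gives at most $\ll_{F,\qq} \prod_{v \in S_\infty^{\ps}(F)}(1+T_v)$ admissible $\chi$'s per choice of $\Ff$ and discrete archimedean data, and summing over the $O_{F,\qq,\kbf}(1)$ such choices yields \eqref{monomialcount}. The main obstacle is precisely this structural identification: one must analyze the connected component of $\A_E^\times / E^\times \A_F^\times$ as an extension of a compact torus by $\R^{|S_\infty^{\ps}(F)|}$, with the noncompact direction picked out by the archimedean places of $F$ that split in $E$, and track the resulting regulators and indices in terms of $F$ and $\qq$.
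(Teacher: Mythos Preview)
Your proposal is correct and follows essentially the same approach as the paper: reduce via Theorem \ref{autoinductconvthm} to counting Hecke characters $\chi$ of the quadratic extension $E$ with prescribed restriction to $\A_F^\times$ and prescribed norm of conductor, fix the $O_{F,\qq,\kbf}(1)$ choices of $\Ff$ and discrete archimedean data, and then show that the remaining continuous parameters $\sbf_{\ps}$ range over (a translate of) a full-rank lattice in $(i\R)^{\#S_\infty^{\ps}(F)}$ dual to the image of $\OO_E^\times$ under the relative logarithm map at the split archimedean places.

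The only noteworthy difference is in how the full-rank property of that lattice is established. You obtain it from the rank identity $\mathrm{rank}\,\OO_E^\times - \mathrm{rank}\,\OO_F^\times = \#S_\infty^{\ps}(F)$ together with Pontryagin duality; the paper instead isolates this as a separate lemma (Lemma \ref{latticelemma}) and proves the sharper statement that the kernel of the map $\Phi\colon \OO_E^\times \to \R^{\#S_\infty^{\ps}(F)}$ is \emph{exactly} $\mu(\OO_E)\OO_F^\times$, using a Galois-theoretic argument that invokes the hypothesis that $E/F$ is ramified at a nondyadic finite place. Your softer rank argument already suffices for the upper bound in \eqref{monomialcount}, since one only needs $\ker\Phi/\OO_F^\times$ to be finite (which follows from comparing the rank of $\OO_F^\times$ with the dimension of the subspace of the Dirichlet hyperplane cut out by the conditions $x_{w_1}=x_{w_2}$); the paper's exact determination of the kernel is a bit stronger than what the proposition requires.
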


\begin{proof}
Let $E$ be the quadratic extension of $F$ associated to $\omega_{\rr}$ via global class field theory, so that $\dd_{E/F} = \rr$. By \hyperref[autoinductconvthm]{Theorem \ref*{autoinductconvthm}}, it suffices to count Hecke characters $\chi$ of $E^{\times} \backslash \A_E^{\times}$ satisfying $\chi |_{\A_F^{\times}} = \omega_{\rr}$, having arithmetic conductor with relative norm $\qq \rr^{-1}$, whose associated archimedean weight vector $\kbf$ is fixed and archimedean principal series spectral parameter vector $\sbf_{\ps}$ satisfies $|\sbf_{\ps}| \leq \Tbf_{\ps}$. Note that \hyperref[monomialpropertieslemma]{Lemma \ref*{monomialpropertieslemma}} implies that no such characters exist unless $\qq = \rr^2$. Any such character $\chi$ determines an integral ideal $\Ff$ of $\OO_E$, such that $N_{E/F}(\Ff) = \qq \rr^{-1}$, and local characters $\beta_w$ of $\OO_w^{\times}$ for each $w \in S_f(E)$ for which $\Pp_w \cap \OO_E \mid \Ff$, such that $\beta_w$ descends to a primitive character of the finite abelian group $\OO_w^{\times} / \left(1 + \Pp_w^{c(\chi_w)}\right)$ when restricted to $\OO_w^{\times}$; we observe that there are only finitely many such ideals and, in turn, only finitely many such characters.

To count all such $\chi$, we first fix an ideal $\Ff$ satisfying $N_{E/F}(\Ff) = \qq \rr^{-1}$ and a set of characters $\beta_w$ for each $w \in S_f(E)$ for which $\Pp_w \cap \OO_E \mid \Ff$; we then bound the number of such Hecke characters $\chi = \prod_w \chi_w$ satisfying the conditions of the previous paragraph that additionally have arithmetic conductor $\Ff$ and whose local components are of the form $\chi_w = \beta_w |\cdot|_w^{s_w}$ for each $w \in S_f(E)$ for which $\Pp_w \cap \OO_E \mid \Ff$, where $s_w \in i\R$. We then sum this bound over the finitely many possible characters $\beta_w$ and the finitely many possible integral ideals $\Ff$.

At the archimedean places, if $v \in S_{\R}^{\ps}(F)$ with $w_1, w_2 \mid v$, then $k_v = |m_{w_1} - m_{w_2}| = 0$ and $s_v = (s_{w_1} - s_{w_2})/2 = s_{w_1}$, while if $v \in S_{\R}^{\ds}(F)$ with $w \mid v$, then $k_v = |m_w| + 1$, $s_v = m_w/2$, and $s_w = 0$, and finally $k_v = |m_{w_1} - m_{w_2}| = 2|m_{w_1}| \in 2 \N \cup \{0\}$ and $s_v = (s_{w_1} - s_{w_2})/2 = s_{w_1} \in i\R$ if $v \in S_{\Cb}(F)$ with $w_1, w_2 \mid v$.

The fact that $\chi$ is trivial on $E^{\times}$ implies that for each unit $\epsilon$ of $\OO_E$,
\[1 = \prod_{w \in S_{\R}(E)} \sgn_w(\epsilon_w)^{m_w} |\epsilon_w|_w^{s_w} \prod_{w \in S_{\Cb}(E)} e^{i m_w \arg_w(\epsilon_w)} |\epsilon_w|_w^{s_w} \prod_{\substack{w \in S_f(E) \\ \Pp_w \cap \OO_E \mid \Ff}} \beta_w(\epsilon_w),\]
where we write $\epsilon_w$ to denote the image of $\epsilon$ in the local field $E_w$, and for $w \in S_f(E)$, $\Pp_w$ is the maximal ideal of $\OO_w$. It follows that
\begin{multline}\label{fundunitcondition}
\sum_{\substack{v \in S_{\infty}^{\ps}(F) \\ w_1, w_2 \mid v}} s_v \log \frac{|\epsilon_{w_1}|_{w_1}}{|\epsilon_{w_2}|_{w_2}} + \sum_{w \in S_{\R}(E)} m_w \log \sgn_w(\epsilon_w)	\\
+ i \sum_{w \in S_{\Cb}(E)} m_w \arg_w(\epsilon_w) + \sum_{\substack{w \in S_f(E) \\ \Pp_w \cap \OO_E \mid \Ff}} \log \beta_w(\epsilon_w) \in 2\pi i \Z
\end{multline}
for a fixed branch of the logarithm that ensures this is well-defined; by assumption, all but the first term is fixed.

We will show in \hyperref[latticelemma]{Lemma \ref*{latticelemma}} that under the assumption that every prime ideal $\pp$ dividing $\rr$ is nondyadic,
\[\left\{\left(\log \frac{|\epsilon_{w_1}|_{w_1}}{|\epsilon_{w_2}|_{w_2}}\right)_{w_1, w_2 \mid v \in S_{\infty}^{\ps}(F)} \in \R^{\# S_{\infty}^{\ps}(F)} : \epsilon \in \OO_E^{\times}\right\}\]
is a lattice in $\R^{\# S_{\infty}^{\ps}(F)}$, so that
\[\left\{\sbf_{\ps} \in (i\R)^{\# S_{\infty}^{\ps}(F)} : \sum_{\substack{v \in S_{\infty}^{\ps}(F) \\ w_1, w_2 \mid v}} s_v \log \frac{|\epsilon_{w_1}|_{w_1}}{|\epsilon_{w_2}|_{w_2}} \in 2\pi i \Z \text{ for all $\epsilon \in \OO_E^{\times}$}\right\}\]
is a dual lattice in $(i\R)^{\# S_{\infty}^{\ps}(F)}$, and
\[\left\{\sbf_{\ps} \in (i\R)^{\# S_{\infty}^{\ps}(F)} : \text{\eqref{fundunitcondition} holds for all $\epsilon \in \OO_E^{\times}$}\right\}\]
is the translate of this dual lattice by a fixed vector. The result then follows by standard bounds for the number of points in a lattice lying in a rectangle, and summing over the finitely many possible combinations of local characters $\beta_w$ for each $w \in S_f(E)$ for which $\Pp_w \cap \OO_E \mid \Ff$ and over the finitely many possible integral ideals $\Ff$ of $\OO_E$ for which $N_{E/F}(\Ff) = \qq \rr^{-1}$.
\end{proof}

\begin{lemma}\label{latticelemma}
Let $E/F$ be a quadratic extension that is ramified at a nondyadic nonarchimedean place and is such that the set $S_{\infty}^{\ps}(F)$ of archimedean places of $F$ that split in $E$ is nonempty, and fix an ordering $w_1,w_2$ of the places in $E$ lying over a place $v \in S_{\infty}^{\ps}(F)$. Consider the map $\Phi : \OO_E^{\times} \to \R^{\# S_{\infty}^{\ps}(F)}$ given by
\[\Phi(\epsilon) \defeq \left(\log \frac{|\epsilon_{w_1}|_{w_1}}{|\epsilon_{w_2}|_{w_2}}\right)_{w_1, w_2 \mid v \in S_{\infty}^{\ps}(F)}.\]
Then $\Phi$ is a homomorphism with kernel $\mu(\OO_E) \OO_F^{\times}$, with $\mu(\OO_E)$ the set of roots of unity lying in $E$, and $\OO_F^{\times}$ viewed as a subgroup of $\OO_E^{\times}$. In particular, the image of $\Phi$ is a lattice in $\R^{\# S_{\infty}^{\ps}(F)}$.
\end{lemma}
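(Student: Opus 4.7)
For the first assertion, the plan is to observe that multiplicativity of the $|\cdot|_w$ and additivity of $\log$ make $\Phi$ a group homomorphism, and then to verify the inclusion $\mu(\OO_E)\OO_F^\times \subseteq \ker\Phi$ by direct check: roots of unity satisfy $|\zeta|_w = 1$ at every archimedean place, while for $u \in \OO_F^\times$ the two places $w_1, w_2$ of $E$ above any split $v \in S_\infty^{\ps}(F)$ restrict to the same place of $F$, so $|u|_{w_1} = |u|_{w_2}$.

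For the reverse inclusion I would let $\sigma$ denote the nontrivial element of $\Gal(E/F)$ and exploit the fact that $\sigma$ swaps $w_1, w_2$ above any split $v \in S_\infty^{\ps}(F)$ and acts as complex conjugation (preserving $|\cdot|_w$) on the complex place above any ramified real $v \in S_\R^{\ds}(F)$. Thus $\Phi(\epsilon) = 0$ forces $|\sigma(\epsilon)|_w = |\epsilon|_w$ at every archimedean $w$, and at finite $w$ this holds automatically since $\epsilon$ is a unit. Consequently $\zeta \defeq \sigma(\epsilon)/\epsilon \in \mu(\OO_E)$, and applying $\sigma$ gives $N_{E/F}(\zeta) = \zeta\sigma(\zeta) = 1$. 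Hilbert 90 for the cyclic extension $E/F$ then supplies $\beta \in E^\times$ with $\sigma(\beta)/\beta = \zeta$, and $\sigma(\epsilon/\beta) = \epsilon/\beta$ shows $\epsilon/\beta \in F^\times$. The remaining step is to refine $\beta$ to lie in $\mu(\OO_E)$, for then $\epsilon/\beta \in F^\times \cap \OO_E^\times = \OO_F^\times$. I plan to execute this refinement via a case analysis on the cyclic group $\mu(\OO_E)$ under the $\sigma$-action: $H^1(\Gal(E/F), \mu(\OO_E))$ has order at most $2$, and a unit $\epsilon$ realizing a nontrivial class would force $\epsilon^2 \in \mu(\OO_E)\OO_F^\times$ with $E = F(\epsilon) = F\bigl(\sqrt{u}\bigr)$ for some $u \in \OO_F^\times$, rendering $E/F$ unramified at every nondyadic nonarchimedean place and contradicting the hypothesis that $\dd_{E/F}$ contains a nondyadic prime.

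For the ``in particular'' claim I would factor $\Phi = \pi \circ L_E$, where $L_E \colon \OO_E^\times \to \R^{\#S_\infty(E)}$ is the Dirichlet logarithmic embedding (kernel $\mu(\OO_E)$; image a lattice $\Lambda_E$ of rank $r_1(E) + r_2(E) - 1$ in the hyperplane $\{\sum_w x_w = 0\}$) and $\pi$ is the linear projection recording the differences $x_{w_1} - x_{w_2}$ over split pairs. The kernel identification just established will show that $L_E(\OO_F^\times) = \Lambda_E \cap \ker\pi$ is a sublattice of rank $r_1(F) + r_2(F) - 1$, which equals $\dim\ker\pi$ inside the hyperplane; standard linear algebra then delivers $\pi(\Lambda_E) = \Phi(\OO_E^\times)$ as a discrete subgroup of $\R^{\#S_\infty^{\ps}(F)}$ of full rank $(r_1(E)+r_2(E)-1) - (r_1(F)+r_2(F)-1) = \#S_\infty^{\ps}(F)$, that is, a lattice.

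The hard part will be the cohomological refinement placing $\beta$ in $\mu(\OO_E)$: Hilbert 90 for $E^\times$ is classical, but the analogous vanishing for the finite module $\mu(\OO_E)$ can fail, and ruling out unit-realized obstructions requires combining the cyclic-group analysis with the ramification hypothesis.
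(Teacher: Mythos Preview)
Your approach is sound and lands on the same contradiction as the paper: if $\epsilon \in \ker\Phi$ fails to lie in $\mu(\OO_E)\OO_F^\times$, then $E = F(\sqrt{u})$ for some $u \in \OO_F^\times$, whence $E/F$ is unramified at every nondyadic finite place. Two remarks on execution. First, the Hilbert~90 step is superfluous: you already have $\beta = \epsilon \in E^\times$ with $\sigma(\epsilon)/\epsilon = \zeta$, so the content lies entirely in whether $\zeta$ is a coboundary for the module $\mu(\OO_E)$ rather than for $E^\times$. Second, and more substantively, the jump from $\epsilon^2 \in \mu(\OO_E)\OO_F^\times$ to $E = F(\sqrt{u})$ with $u \in \OO_F^\times$ is not immediate: writing $\epsilon^2 = \zeta' u'$, one needs $\zeta'$ to be a square in $\mu(\OO_E)$ in order to absorb it into $\epsilon$. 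The paper handles exactly this point by an explicit construction rather than cohomology: it factors $\mu = \epsilon\sigma(\epsilon)^{-1}$ into its odd-order and $2$-power-order parts, twists $\epsilon$ by a suitable root of unity to kill the odd part (odd-order roots of unity being squares), obtains $\epsilon'$ with $\epsilon'^n \in \OO_F^\times$ for some $2$-power $n$, and then descends on $n$. Your $H^1$ packaging is equivalent once the promised case analysis is carried out, and your lattice argument via the Dirichlet embedding is in fact more careful about discreteness than the paper's, which simply invokes Dirichlet's unit theorem for the rank identity $\#S_\infty(E) - \#S_\infty(F) = \#S_\infty^{\ps}(F)$.
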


\begin{proof}
It is clear that $\Phi$ is a homomorphism and that $\mu(\OO_E) \OO_F^{\times}$ lies in the kernel of $\Phi$. To prove the reverse containment, we let $\sigma$ be the nontrivial automorphism of $E/F$ in $\Gal(E/F)$. This acts transitively on the set of extensions $E_w$ lying over $F_v$ for any place $v$ of $F$, so that for any $x \in E$, if a place $v \in S_{\infty}(F)$ splits in $E$ with $w_1,w_2$ lying over $v$, then $\sigma(x_{w_2}) = x_{w_1}$, while if $v$ ramifies in $E$ with $w$ lying over $v$, then $\sigma(x_w) = x_w$. It follows that $\epsilon \in \ker \Phi$ if and only if
\[\mu \defeq \epsilon \sigma(\epsilon)^{-1}\]
satisfies $|\mu_w|_w = 1$ for every archimedean place $w$ of $E$. As $\epsilon \in \OO_E^{\times}$ implies that $\sigma(\epsilon) \in \OO_E^{\times}$, so that $\mu \in \OO_E^{\times}$, it follows that this is only possible should $\mu$ be a root of unity in $E$.

Let $N = mn$ be the order of the root of unity $\mu$, with $m$ a positive odd integer and $n$ a power of $2$. By B\'{e}zout's identity, there exist integers $a,b$ such that $am + bn = 1$. We may then factor $\mu$ as $\mu = \mu^{am} \mu^{bn}$, with $\mu^{am}, \mu^{bn}$ roots of unity in $\mu\left(\OO_E\right)$ such that $\mu^{am}$ is of order $n$ and $\mu^{bn}$ is of order $m = 2k + 1$.

We define the unit $\epsilon' \defeq \epsilon \mu^{-(k + 1)bn}$ of $\OO_E$; it satisfies
\[\sigma\left(\epsilon'\right) = \sigma(\epsilon) \sigma(\mu)^{-(k + 1)bn} = \epsilon \mu^{-1} \mu^{(k + 1)bn} = \epsilon' \mu^{-am}\]
by the fact that $\sigma(\mu) = \mu^{-1}$ and that $\mu^{2(k + 1)bn} = \mu^{bn}$. It follows that $\sigma\left({\epsilon'}^{n}\right) = {\epsilon'}^n$, and hence ${\epsilon'}^n$ lies in $F$. We claim that $\epsilon' \in F$, in which case it must be a unit of $\OO_F$, and we deduce that $\epsilon \in \mu\left(\OO_F\right) \OO_F^{\times}$, as desired.

Indeed, suppose in order to obtain a contradiction that the least power $r$ of $2$ for which ${\epsilon'}^{r/2} \notin F$ is greater than $1$, and consider the extension $F\left({\epsilon'}^{r/2}\right)$ of $F$; as ${\epsilon'}^r \in F$, this is a quadratic extension, and as ${\epsilon'}^{r/2} \in E$, this extension must be equal to $E$. On the other hand, this extension is generated by the square root of a unit ${\epsilon'}^r$ of $\OO_F$, and hence is unramified at every nondyadic nonarchimedean place, contradicting our assumption on $r$; consequently, $\epsilon' \in F$. To see this, note that at a nondyadic nonarchimedean place $v$, if ${\epsilon'}^r$ is a square in $F_v$ then $F\left({\epsilon'}^{r/2}\right)$ splits at $v$ and so is unramified there. Otherwise $F_v\left({\epsilon'}^{r/2}\right)$ is the splitting field over $F_v$ of $f(x) = x^2 - {\epsilon'}^r$. As ${\epsilon'}^r$ is a unit in $F_v$, this is coprime to $f'(x) = 2x$ (noting that $(2,q_v) = 1$), and so the discriminant of $F_v\left({\epsilon'}^{r/2}\right) / F_v$ is contained in the ideal generated by the discriminant of $f(x)$, which is a unit.

Finally, the fact that the image of $\Phi$ is a lattice follows from Dirichlet's unit theorem: $\OO_E^{\times}$ and $\ker \Phi = \mu(\OO_E) \OO_F^{\times}$ are finitely generated abelian groups of rank $\# S_{\infty}(E) - 1$ and $\# S_{\infty}(F) - 1$ respectively, which implies that $\OO_E^{\times} / \ker \Phi$ is a finitely generated abelian group of rank
\[\# S_{\infty}(E) - 1 - \# S_{\infty}(F) + 1 = \# S_{\infty}^{\ps}(F).\qedhere\]
\end{proof}

\section{Weyl Laws for Automorphic Representations of \texorpdfstring{$\GL_2(\A_F)$}{GL\9040\202(AF)}}\label{Weyllawsect}

Let $v$ be a nonarchimedean place of an algebraic number field $F$, and let $\pi_v, \pi_v'$ be generic irreducible admissible unitarisable representations of $\GL_2(F_v)$. We say that $\pi_v$ and $\pi_v'$ are similar if they have the same central character and are either
\begin{itemize}
\item both supercuspidal representations that are isomorphic up to twisting by the unramified character $|\cdot|_v^{\frac{\pi i}{\log q_v}}$,
\item both special representations that are isomorphic up to twisting by the unramified character $|\cdot|_v^{\frac{\pi i}{\log q_v}}$, or
\item both principal series representations
\[\pi_v \cong \beta_{v,1} |\cdot|_v^{s_1} \boxplus \beta_{v,2} |\cdot|_v^{s_2}, \qquad \pi_v' \cong \beta_{v,1}' |\cdot|_v^{s_1'} \boxplus \beta_{v,2}' |\cdot|_v^{s_2'}\]
with $s_1 + s_2 - s_1' - s_2' \in \frac{2\pi i}{\log q_v} \Z$ and either $\beta_{v,1}' = \beta_{v,1}$ and $\beta_{v,2}' = \beta_{v,2}$, or $\beta_{v,1}' = \beta_{v,2}$ and $\beta_{v,2}' = \beta_{v,1}$.
\end{itemize}
Likewise, for a real place $v$ and generic irreducible admissible unitarisable $\left(\gl_2\left(F_v\right)_{\Cb}, K_v\right)$-modules $\pi_v, \pi_v'$, we say that $\pi_v$ and $\pi_v'$ are similar if they are either
\begin{itemize}
\item both isomorphic discrete series representations, or
\item both principal series representations
\[\pi_v \cong \sgn_v^{m_1} |\cdot|_v^{s_1} \boxplus \sgn_v^{m_2} |\cdot|_v^{s_2}, \qquad \pi_v' \cong \sgn_v^{m_1'} |\cdot|_v^{s_1'} \boxplus \sgn_v^{m_2'} |\cdot|_v^{s_2'}\]
with $s_1 + s_2 = s_1' + s_2'$ and equal weights $k_v = |m_1 - m_2|$, $k_v' = |m_1' - m_2'|$.
\end{itemize}
Finally, two generic irreducible admissible unitarisable $\left(\gl_2\left(F_v\right)_{\Cb}, K_v\right)$-modules $\pi_v, \pi_v'$, with $v$ a complex place of $F$, are said to be similar if they are both principal series representations
\[\pi_v \cong e^{i m_1 \arg_v} |\cdot|_v^{s_1} \boxplus e^{i m_2 \arg_v} |\cdot|_v^{s_2}, \qquad \pi_v' \cong e^{i m_1' \arg_v} |\cdot|_v^{s_1'} \boxplus e^{i m_2' \arg_v} |\cdot|_v^{s_2'}\]
with $s_1 + s_2 = s_1' + s_2'$ and equal weights $k_v = |m_1 - m_2|$, $k_v' = |m_1' - m_2'|$.

We denote by $\Xx_v$ a local similarity class of generic irreducible admissible unitarisable representations of $\GL_2(F_v)$ or $\left(\gl_2\left(F_v\right)_{\Cb}, K_v\right)$-modules. A global similarity class $\Xx \defeq \left(\Xx_v\right)$ of cuspidal automorphic representations of $\GL_2(\A_F)$ is then defined to be the set of all isomorphism classes of cuspidal automorphic representations $\pi$ of $\GL_2(\A_F)$ for which $\pi_v \in \Xx_v$ for each local component $\pi_v$ of $\pi$. Note that two elements $\pi,\pi'$ of a global similarity class $\Xx$ may well have different principal series spectral parameters $s_v$ at each place $v$ for which $\Xx_v$ is a similarity class of principal series representations, but that they share the same central character $\omega_{\pi}$, arithmetic conductor $\qq$, and archimedean weight vector $\kbf$.

In \cite{Palm}, Palm chooses a test function for the ad\`{e}lic Arthur--Selberg trace formula in order to produce a Weyl law for global similarity classes $\Xx$ of cuspidal automorphic representations; that is, he gives an asymptotic estimate for the number of elements of $\Xx$ for which $|\sbf_{\ps}| \leq \Tbf_{\ps}$.

\begin{proposition}[Weyl Law for $\GL_2(\A_F)$ {\cite[Theorem 3.2.1]{Palm}}]\label{Weyllaw}
Let $\Xx$ be a global similarity class of cuspidal automorphic representations of $\GL_2(\A_F)$, and let $S_{\infty}^{\ps}$ be a nonempty subset of the archimedean places of $F$ such that $\Xx_v$ is a similarity class of principal series representations for each $v \in S_{\infty}^{\ps}$. Suppose further that at each complex place $v$, the archimedean weight of $\Xx_v$ is $k_v = 0$. Then we have the Weyl law
\begin{multline*}
\#\left\{\pi \in \Xx : \left|\sbf_{\ps}(\pi)\right| \leq \Tbf_{\ps}\right\}	\\
= C_{\Xx} \prod_{v \in S_{\R}^{\ps}} T_v^2 \prod_{v \in S_{\Cb}} T_v^3 + O_F\left(C_{\Xx} \sum_{v' \in S_{\infty}^{\ps}} \frac{1}{T_{v'}} \prod_{v \in S_{\R}^{\ps}} T_v^2 \prod_{v \in S_{\Cb}} T_v^3 + \sum_{v' \in S_{\R}^{\ps}} \log (T_{v'}) \prod_{v \in S_{\infty}^{\ps}} T_v\right).
\end{multline*}
Here the implicit constant in the error term depends only on the algebraic number field $F$, and the constant $C_{\Xx}$ is defined by
\[C_{\Xx} \defeq \frac{\vol\left(Z\left(\A_F\right) \GL_2(F) \backslash \GL_2\left(\A_F\right)\right)}{(4\pi)^{\# S_{\R}} (8\pi^2)^{\# S_{\Cb}}} \prod_{v \notin S_{\infty}^{\ps}} C_{\Xx_v}, \]
with each local constant $C_{\Xx_v}$ defined such that if $\Xx_v$ is not a similarity class of principal series representations, then for any $\pi_v \in \Xx_v$,
\[C_{\Xx_v} \defeq \begin{dcases*}
q_v - 1 & if $\pi_v$ is a special representation,	\\
\dim \rho_{\pi_v} & if $\pi_v$ is a type I supercuspidal representation,	\\
\frac{q_v + 1}{2} \dim \rho_{\pi_v} & if $\pi_v$ is a type II supercuspidal representation,	\\
k_v - 1 & if $\pi_v$ is a discrete series representation of weight $k_v$,
\end{dcases*}\]
while if $\Xx_v$ is a similarity class of principal series representations, then for any $\pi_v \cong \omega_{v,1} \boxplus \omega_{v,2} \in \Xx_v$,
\[C_{\Xx_v} \defeq \begin{dcases*}
q_v^{c\left(\omega_{v,1} \omega_{v,2}^{-1}\right) - 1} \left(q_v + 1\right) & if $c\left(\omega_{v,1} \omega_{v,2}^{-1}\right) \geq 1$,	\\
1 & otherwise.
\end{dcases*}\]
\end{proposition}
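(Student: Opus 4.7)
The plan is to deploy the Arthur--Selberg trace formula for $\GL_2\left(\A_F\right)$ with a test function $f = \prod_v f_v$ carefully engineered to isolate the global similarity class $\Xx$. At each nonarchimedean place $v$ (and at each archimedean place $v \notin S_\infty^{\ps}$ where $\Xx_v$ is a discrete series similarity class), I would choose $f_v$ to be a truncated matrix coefficient of a representative $\pi_v \in \Xx_v$; by Schur orthogonality, its Plancherel transform is essentially a delta mass supported on $\Xx_v$ with weight $1/d(\pi_v)$, where $d(\pi_v)$ is the formal degree. The constants $C_{\Xx_v}$ in the statement are precisely the reciprocal formal degrees for special, supercuspidal, and discrete series representations, and Plancherel densities for ramified principal series. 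At archimedean principal series places $v \in S_\infty^{\ps}$, I would take $f_v$ to be smooth of compact support, with Plancherel transform a smoothed indicator of the spectral rectangle $\left|s_v\right| \leq T_v$.

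The main term then comes from the identity contribution on the geometric side, equal to
	\[\vol\left(Z\left(\A_F\right) \GL_2(F) \backslash \GL_2\left(\A_F\right)\right) \prod_v f_v(1).
\]
At archimedean places $v \in S_\infty^{\ps}$, Plancherel inversion and integration of the Plancherel density (which is $\left|s_v \tanh \pi s_v\right|$ in the real case and $\left|s_v\right|^2$ in the complex case) against the smoothed indicator yield a factor of $T_v^2$ or $T_v^3$ respectively, along with the normalising constants $(4\pi)^{-1}$ per real place and $(8 \pi^2)^{-1}$ per complex place. At nonarchimedean places and archimedean discrete series places, the value $f_v(1)$ reproduces the local constants $C_{\Xx_v}$, giving the advertised product formula for $C_\Xx$ and the main term.

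The error analysis splits naturally into two pieces. The smooth cutoff at each $v' \in S_\infty^{\ps}$ has support radius $O(T_{v'}^{-1})$ by Paley--Wiener, so edge effects in the Plancherel integration give a contribution of size $T_{v'}^{-1} \prod_v T_v^{d_v}$, explaining the first error term. The $\log T_{v'}$ contributions at real principal series places arise from the weighted unipotent orbital integrals on the geometric side and from the Eisenstein/intertwining-operator contribution on the spectral side, both of which carry a logarithmic derivative of $L$-functions at $s = 0$. The remaining elliptic and hyperbolic orbital integrals are bounded by $O_{F,\qq,\kbf}\left(\prod_v T_v\right)$ by the compact support of the test function modulo the centre.

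The principal obstacle is the explicit identification of the constants $C_{\Xx_v}$ at ramified nonarchimedean places: for supercuspidal representations one must invoke the formal degree formula, distinguishing type I (where the inducing subgroup is $Z\left(F_v\right) \GL_2\left(\OO_v\right)$ and the formal degree is $\dim \rho_{\pi_v}$) from type II (where the factor $(q_v + 1)/2$ reflects the index of $N K_0\left(\pp_v\right)$); for ramified principal series one must compute the Plancherel measure, whose denominator $q_v^{\lfloor c / 2 \rfloor} + 1$ encodes the dimension of the newvector space for the representation of $\GL_2\left(\OO_v\right)$ induced from a character of conductor exponent $c\left(\omega_{v,1} \omega_{v,2}^{-1}\right)$. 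Assembling these local ingredients uniformly in $\Xx$ and tracking the dependence of the error on $F$, $\qq$, and $\kbf$ is the technical heart of the argument.
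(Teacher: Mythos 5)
This proposition is not proved in the paper at all: it is imported wholesale from Palm's thesis \cite[Theorem 3.2.1]{Palm}, and the only original content the paper supplies is a correction of several typographical errors in the constants, justified by tracing them back to \cite[Proposition 7.7.1]{Palm}, \cite[Corollary 7.7.2]{Palm}, and \cite[Proposition 9.7.1]{Palm}, and by a sanity check against the classical Weyl law for $\MM^{\new}\left(\Gamma_0\left(p^2\right)\right)$ in \cite{Risager} using $\vol\left(\SL_2(\Z) \backslash \H\right) = \pi/3$. Your sketch does correctly identify the strategy Palm actually uses (a test function in the Arthur--Selberg trace formula built from local spectral projectors, with the identity contribution giving the main term), so in that sense you are reconstructing the cited proof rather than the paper's treatment of it. But as written it is a plan, not a proof: every step that makes the result nontrivial --- the construction and integrability of the local test functions at ramified places, the explicit evaluation of $f_v(1)$ producing each $C_{\Xx_v}$, and the uniform error analysis --- is exactly what you defer to ``the technical heart of the argument.''

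Two concrete points deserve flagging. First, you have the normalization inverted: if $f_v$ is scaled so that $\operatorname{tr} \pi_v\left(f_v\right) = 1$ on the similarity class, then the identity contribution $f_v(1)$ equals the \emph{formal degree} itself, not its reciprocal --- and indeed $C_{\Xx_v} = k_v - 1$ for a discrete series of weight $k_v$ and $C_{\Xx_v} = q_v - 1$ for $\St_v$ are (up to normalization of Haar measure) the formal degrees. Second, Schur orthogonality is unavailable for ramified principal series, whose matrix coefficients are not square-integrable; Palm instead works with the dimension of the space of $K_0\left(\pp_v^n\right)$-fixed vectors (newvector theory), which is where the denominators $q_v^{\lfloor c/2 \rfloor} + 1$ and the factor of $2$ (from the unramified twist $|\cdot|_v^{\pi i / \log q_v}$ lying in the same similarity class) come from. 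If the proposition is to be used as in the paper, the defensible course is to cite Palm and independently verify the constants, which is precisely what the paper does; reproving the Weyl law from scratch would be a substantial undertaking well beyond this outline.
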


We have corrected several typographical errors in \cite[Theorem 3.2.1]{Palm}. The final error term should be $\sum_{v' \in S_{\R}^{\ps}} \log T_{v'} \prod_{v \in S_{\infty}^{\ps}} T_v$, not $\sum_{v' \in S_{\R}^{\ps}} \log T_{v'} \prod_{v \in S_{\infty}} T_v$. By \cite[Proposition 7.7.1, Corollary 7.7.2, Proposition 8.7.1]{Palm}, the denominator in the definition of $C_{\Xx}$ should be $(4\pi)^{\# S_{\R}} (8\pi^2)^{\# S_{\Cb}}$, not $(4\pi)^{\# S_{\infty}}$. The definition of $C_{\Xx_v}$ for $\Xx_v$ a similarity class of discrete series representations of weight $k_v$ ought to be $k_v - 1$ as in \cite[Corollary 7.7.2]{Palm}, not $\frac{k_v - 1}{2}$. Finally, for $\Xx_v$ a similarity class of principal series representations with $c\left(\omega_{v,1} \omega_{v,2}^{-1}\right) \geq 1$, the definition of $C_{\Xx_v}$ is incorrect in both \cite[Theorem 3.2.1]{Palm} and its derivation in \cite[Proposition 9.7.1]{Palm}; there is no need to normalise $C_{\Xx_v}$ by dividing by $q_v^{\lfloor c\left(\omega_{v,1} \omega_{v,2}^{-1}\right) / 2 \rfloor} + 1$.

We let
\[\Xx\left(K_0(\qq),\kbf,S_{\infty}^{\ps}\right)\]
denote the set of isomorphism classes of cuspidal automorphic representations of arithmetic conductor $\qq$, trivial central character, archimedean weight vector $\kbf$, having principal series representations at a fixed nonempty subset $S_{\infty}^{\ps}$ of the archimedean places of $F$ containing all the complex places, and having discrete series representations at the remaining real archimedean places $S_{\R}^{\ds} \defeq S_{\infty} \setminus S_{\infty}^{\ps}$; this is a union of similarity classes. For a quadratic Hecke character $\omega_{\rr}$ of arithmetic conductor $\rr$, we define
\begin{multline*}
\Xx\left(K_0(\qq),\kbf,S_{\infty}^{\ps}\right)_{\nonmon(\omega_{\rr})}	\\
\defeq \left\{\pi \in \Xx\left(K_0(\qq),\kbf,S_{\infty}^{\ps}\right) : \pi \otimes \omega_{\rr} \ncong \pi,	\ \pi \otimes \omega_{\rr} \in \Xx\left(K_0(\qq),\kbf,S_{\infty}^{\ps}\right) \right\};
\end{multline*}
these are the isomorphism classes of $\omega_{\rr}$-nonmonomial cuspidal automorphic representations in $\Xx\left(K_0(\qq),\kbf,S_{\infty}^{\ps}\right)$ that are arithmetic conductor-invariant under twisting by $\omega_{\rr}$. Using the Weyl law, we are able to count such cuspidal automorphic representations, and hence prove instances of spectral multiplicity for modular forms over arbitrary number fields.

\begin{theorem}\label{Fratiothm}
Let $S_{\infty}^{\ps} \subset S_{\infty}$ be nonempty and $\kbf$ be a fixed even archimedean weight vector with $k_v = 0$ for all $v \in S_{\infty}^{\ps}$. Let $\qq$ and $\rr$ be integral ideals of $\OO_F$ such that $\rr \neq \OO_F$ is squarefree and every prime ideal $\pp$ dividing $\rr$ is nondyadic. Let $\omega_{\rr}$ be a quadratic Hecke character of arithmetic conductor $\rr$. Then if $\rr^2$ does not divide $\qq$,
\begin{equation}\label{Frationdivide}
\Xx\left(K_0(\qq),\kbf,S_{\infty}^{\ps}\right)_{\nonmon(\omega_{\rr})} = \emptyset,
\end{equation}
while if $\rr^2$ divides $\qq$, we have the Weyl law
\begin{multline}\label{Fratiodivide}
\frac{\# \left\{\pi \in \Xx\left(K_0(\qq),\kbf,S_{\infty}^{\ps}\right)_{\nonmon(\omega_{\rr})} : \left|\sbf_{\ps}(\pi)\right| \leq \Tbf_{\ps}\right\}}{\# \left\{\pi \in \Xx\left(K_0(\qq),\kbf,S_{\infty}^{\ps}\right) : \left|\sbf_{\ps}(\pi)\right| \leq \Tbf_{\ps}\right\}}	\\
= \prod_{\substack{v \in S_f \\ \pp_v \cap \OO_F \mid \rr \\ \left(\pp_v \cap \OO_F\right)^2 \parallel \qq}} \left(1 - \frac{q_v}{q_v^2 - q_v - 1}\right) + o_{F,\qq,\kbf}(1)
\end{multline}
as $T_v$ tends to infinity for every $v \in S_{\infty}^{\ps}$. Here the error term depends on $F$,$\qq$, and $\kbf$, and we have the convention that the empty product is equal to $1$.

Moreover, the same holds if we replace $\Xx\left(K_0(\qq),\kbf,S_{\infty}^{\ps}\right)_{\nonmon(\omega_{\rr})}$ by
\begin{equation}\label{intersection}
\bigcap_{\substack{\rr' \mid \rr \\ \rr' \neq \OO_F}} \bigcap_{\substack{\omega_{\rr'} \\ \cond\left(\omega_{\rr'}\right) = \rr'}} \Xx\left(K_0(\qq),\kbf,S_{\infty}^{\ps}\right)_{\nonmon(\omega_{\rr'})},
\end{equation}
where $\cond(\omega)$ denotes the arithmetic conductor of $\omega$, and the intersection is over all integral ideals $\rr' \neq \OO_F$ of $\OO_F$ dividing $\rr$ and all quadratic Hecke characters $\omega_{\rr'}$ of arithmetic conductor $\rr'$.
\end{theorem}

The special case $F = \Q$ and $k_{\infty} = 0$ of \hyperref[Fratiothm]{Theorem \ref*{Fratiothm}} is \hyperref[Qratiothm]{Theorem \ref*{Qratiothm}} by the bijective correspondence between newforms and cuspidal automorphic representations. Note in this case that there is a unique primitive quadratic Dirichlet character modulo $r'$ for every positive odd integer $r'$ greater than $1$.

\begin{proof}
It is clear from \hyperref[twistqthm]{Theorem \ref*{twistqthm}} that \eqref{Frationdivide} holds when $\rr^2$ does not divide $\qq$. If $\rr^2$ divides $\qq$, then the $\omega_{\rr}$-monomial representations have density zero in $\Xx\left(K_0(\qq),\kbf,S_{\infty}^{\ps}\right)$ by \hyperref[monomialcountprop]{Propositions \ref*{monomialcountprop}} and \ref{Weyllaw}. Moreover, \hyperref[Weyllaw]{Proposition \ref*{Weyllaw}} implies that
\begin{multline*}
\# \left\{\pi \in \Xx\left(K_0(\qq),\kbf,S_{\infty}^{\ps}\right) : \left|\sbf_{\ps}(\pi)\right| \leq \Tbf_{\ps}\right\}	\\
\sim \frac{\vol\left(Z\left(\A_F\right) \GL_2(F) \backslash \GL_2\left(\A_F\right)\right)}{(4\pi)^{\# S_{\R}} (8\pi^2)^{\# S_{\Cb}}} \prod_{\substack{v \in S_f \\ (\pp_v \cap \OO_F)^r \parallel \qq}} \sum_{\substack{\Xx_v : \exists \pi_v \in \Xx_v \text{ s.t} \\ c(\pi_v) = r, \ \omega_{\pi_v} = 1}} C_{\Xx_v}	\\
\times \prod_{v \in S_{\R}^{\ds}} (k_v - 1) \prod_{v \in S_{\R}^{\ps}} T_v^2 \prod_{v \in S_{\Cb}^{\ps}} T_v^3
\end{multline*}
as $T_v$ tends to infinity for every $v \in S_{\infty}^{\ps}$. Similarly, \hyperref[Weyllaw]{Proposition \ref*{Weyllaw}} coupled with \hyperref[twistqthm]{Theorem \ref*{twistqthm}} implies that
\[\# \left\{\pi \in \Xx\left(K_0(\qq),\kbf,S_{\infty}^{\ps}\right)_{\nonmon(\omega_{\rr})} : \left|\sbf_{\ps}(\pi)\right| \leq \Tbf_{\ps}\right\}\]
is asymptotic to
\begin{multline*}
\frac{\vol\left(Z\left(\A_F\right) \GL_2(F) \backslash \GL_2\left(\A_F\right)\right)}{(4\pi)^{\# S_{\R}} (8\pi^2)^{\# S_{\Cb}}} \prod_{\substack{v \in S_f \\ (\pp_v \cap \OO_F)^r \parallel \qq \\ \pp_v \cap \OO_F \nmid \rr \text{ or } r \neq 2}} \sum_{\substack{\Xx_v : \exists \pi_v \in \Xx_v \text{s.t} \\ c(\pi_v) = r, \ \omega_{\pi_v} = 1}} C_{\Xx_v}	\\
\times \prod_{\substack{v \in S_f \\ \pp_v \cap \OO_F \mid \rr \\ (\pp_v \cap \OO_F)^2 \parallel \qq}} \left(\sum_{\substack{\Xx_v : \exists \pi_v \in \Xx_v \text{ s.t.} \\ c\left(\pi_v\right) = 2, \ \omega_{\pi_v} = 1}} C_{\Xx_v} - \sum_{\substack{\Xx_v : \exists \pi_v \in \Xx_v \text{ s.t.} \\ \pi_v \cong \beta_{v,\Quad} \boxplus \beta_{v,\Quad}}} C_{\Xx_v} - \sum_{\substack{\Xx_v : \exists \pi_v \in \Xx_v \text{ s.t.} \\ \pi_v \cong \beta_{v,\Quad} \St_v}} C_{\Xx_v}\right)	\\
\times \prod_{v \in S_{\R}^{\ds}} (k_v - 1) \prod_{v \in S_{\R}^{\ps}} T_v^2 \prod_{v \in S_{\Cb}^{\ps}} T_v^3.
\end{multline*}
So to show \eqref{Fratiodivide}, we must merely determine the ratio of the quantities
\begin{equation}\label{nonmonconstant}
\sum_{\substack{\Xx_v : \exists \pi_v \in \Xx_v \text{ s.t.} \\ c\left(\pi_v\right) = 2, \ \omega_{\pi_v} = 1}} C_{\Xx_v} - \sum_{\substack{\Xx_v : \exists \pi_v \in \Xx_v \text{ s.t.} \\ \pi_v \cong \beta_{v,\Quad} \boxplus \beta_{v,\Quad}}} C_{\Xx_v} - \sum_{\substack{\Xx_v : \exists \pi_v \in \Xx_v \text{ s.t.} \\ \pi_v \cong \beta_{v,\Quad} \St_v}} C_{\Xx_v},
\end{equation}
and
\begin{equation}\label{constant}
\sum_{\substack{\Xx_v : \exists \pi_v \in \Xx_v \text{ s.t.} \\ c\left(\pi_v\right) = 2, \ \omega_{\pi_v} = 1}} C_{\Xx_v}
\end{equation}
for each place $v \in S_f$ with $\pp_v \cap \OO_F \mid \rr$ and $\left(\pp_v \cap \OO_F\right)^2 \parallel \qq$. The product of these ratios over all such places will then yield \eqref{Fratiodivide}. Finally, the fact that we can replace $\Xx\left(K_0(\qq),\kbf,S_{\infty}^{\ps}\right)_{\omega_{\rr}}$ by \eqref{intersection} again follows from \hyperref[twistqthm]{Theorem \ref*{twistqthm}} and the fact that the monomial representations have density zero.

From the definition of $C_{\Xx_v}$, the second term in \eqref{nonmonconstant} is equal to $1$, while the third is equal to $q_v - 1$. So it remains to determine \eqref{constant}, which we do by dividing into cases.
\begin{itemize}
\item There is only one local similarity class $\Xx_v$ of principal series representations with $\pi_v \cong \omega_{v,1} \boxplus \omega_{v,2} \in \Xx_v$ satisfying $c\left(\omega_{v,1} \omega_{v,2}^{-1}\right) = 0$, namely that containing $\pi_v \cong \beta_{v,\Quad} \boxplus \beta_{v,\Quad}$; this has $C_{\Xx_v} = 1$.
\item The remaining local similarity classes of principal series representations that can occur in the summation index of \eqref{constant} are those containing $pi_v \cong \omega_v \boxplus \omega_v^{-1}$ with $c\left(\omega_v\right) = c\left(\omega_v^2\right) = 1$, so that $c(\pi_v) = 2$ and $\pi_v \ncong \beta_{v,\Quad} \boxplus \beta_{v,\Quad}$. These local similarity classes $\Xx_v$ are parametrised by the squares of characters $\beta_v$ of $\OO_v^{\times} / \left(1 + \pp_v\right)$ with $\beta_v^2 \neq 1$; there are
\[\frac{\# \OO_v^{\times} / \left(1 + \pp_v\right)}{2} - 1 = \frac{q_v - 3}{2}\]
distinct squares of these characters, with $C_{\Xx_v} = q_v + 1$ in each case.
\item There is only one local similarity class $\Xx_v$ of special representations of conductor $\pp_v^2$ and trivial central character, namely that containing $\pi_v \cong \beta_{v,\Quad} \St_v$; this has $C_{\Xx_v} = q_v - 1$.
\item Finally, Knightly and Ragsdale \cite{Knightly} show that every supercuspidal representation $\pi_v$ of conductor $\pp_v^2$ is of type I with $\dim \rho_{\pi_v} = q_v - 1$, and the number of isomorphism classes of such representations having trivial central character is $\frac{q_v - 1}{2}$. So for each such local similarity class $\Xx_v$, we have that $C_{\Xx_v} = q_v - 1$.
\end{itemize}
Combining these calculations, we find that
\[\sum_{\substack{\Xx_v : \exists \pi_v \in \Xx_v \text{ s.t.} \\ c\left(\pi_v\right) = 2, \ \omega_{\pi_v} = 1}} C_{\Xx_v} = q_v - 1 + 1 + \frac{q_v - 3}{2} \left(q_v + 1\right) + \frac{q_v - 1}{2} \left(q_v - 1\right) = q_v^2 - q_v - 1,\]
and that
\[\sum_{\substack{\Xx_v : \exists \pi_v \in \Xx_v \text{ s.t.} \\ c\left(\pi_v\right) = 2, \ \omega_{\pi_v} = 1}} C_{\Xx_v} - \sum_{\substack{\Xx_v : \exists \pi_v \in \Xx_v \text{ s.t.} \\ \pi_v \cong \beta_{v,\Quad} \boxplus \beta_{v,\Quad}}} C_{\Xx_v} - \sum_{\substack{\Xx_v : \exists \pi_v \in \Xx_v \text{ s.t.} \\ \pi_v \cong \beta_{v,\Quad} \St_v}} C_{\Xx_v} = q_v^2 - 2q_v - 1.
\qedhere\]
\end{proof}

\section{Consequences and Conjectures}

\subsection{Spectral Multiplicity}

While our results give lower bounds for spectral multiplicity, our methods say very little about corresponding upper bounds. In light of the numerical evidence of the simplicity of the cuspidal spectrum of the Laplacian on $\SL_2(\Z) \backslash \Hb$, however, we believe that spectral multiplicity should only occur due to quadratic twisting, with the possible exception of the eigenvalue $1/4$.

\begin{conjecture}[Spectral Multiplicity Conjecture for $\Gamma_0(q) \backslash \Hb$]
Let $q$ be a positive odd integer, and let $s(q)$ denote the number of distinct odd primes $p$ for which $p^2$ divides $q$. Then the multiplicity of an eigenvalue $\lambda > 1/4$ in the new part of the cuspidal spectrum of the Laplacian on $\Gamma_0(q) \backslash \Hb$ is bounded from above by $2^{s(q)}$, with this bound attained for a positive proportion of eigenvalues.
\end{conjecture}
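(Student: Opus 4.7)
The conjecture decomposes into two parts: the upper bound $2^{s(q)}$ on spectral multiplicity of eigenvalues $\lambda > 1/4$, and the attainment of this bound for a positive proportion of eigenvalues. The attainment part is essentially already in hand: the Corollary following Theorem \ref{Qratiothm} produces, for a set of eigenvalues of density $\prod_{p^2 \parallel q}\left(1 - \frac{p}{p^2 - p - 1}\right)$, at least $2^{s(q)}$ distinct newforms sharing that eigenvalue. Conditional on the upper bound, the multiplicity on this density set is exactly $2^{s(q)}$, and the positive-proportion claim follows immediately.

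The plan for attacking the upper bound is to recast it in the language of cuspidal automorphic representations and reduce it to a rigidity statement. Suppose $\varphi_1, \ldots, \varphi_r \in \MM^{\new}\left(\Gamma_0(q), \lambda\right)$ are distinct newforms with common eigenvalue $\lambda > 1/4$, and let $\pi_1, \ldots, \pi_r$ be the associated cuspidal automorphic representations of $\GL_2\left(\A_{\Q}\right)$. These share trivial central character, arithmetic conductor $q$, and an isomorphic tempered archimedean component determined by $\lambda$. Let $q'$ be the largest odd squarefree divisor of $q$ such that every prime $p \mid q'$ satisfies $p^2 \mid q$. By Theorem \ref{twistthmadelic}, twisting any fixed $\pi_i$ by the primitive quadratic characters $\e_{\Quad(q^*)}$ for $q^* \mid q'$ produces an orbit of exactly $2^{s(q)}$ cuspidal automorphic representations, all lying in $\Xx\left(\Gamma_0(q), 0, s_{\infty}\right)$. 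The upper bound would follow if one could show that every $\pi_j$ arises from $\pi_1$ in this way.

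The main obstacle, and the reason this remains a conjecture, is precisely this rigidity step: the assertion that two distinct cuspidal automorphic representations of $\GL_2\left(\A_{\Q}\right)$ sharing central character, arithmetic conductor, and archimedean component must differ by a quadratic twist. Specialising to $q = 1$ reduces this to Cartier's conjecture on simplicity of the discrete spectrum of $\SL_2(\Z) \backslash \H$, which is supported by the numerical computations of Cartier and Steil but has no known theoretical attack. Approaches through the Arthur--Selberg trace formula, Langlands functoriality to $\GL_n$, or sharpened forms of non-abelian multiplicity one do not appear to furnish the required rigidity at present. One might at least hope to bound the multiplicity by $2^{s(q)} M$ for some absolute constant $M$ independent of $q$, thereby isolating the $q$-dependence in the twisting orbit and reducing to a question at the base level; but even this weaker bound seems out of reach with current techniques. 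Accordingly, the proposal here must remain conditional on progress towards Cartier's conjecture, and only the lower bound and the conditional attainment statement are genuinely accessible via the methods developed in this paper.
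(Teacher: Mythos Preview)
The statement is a \emph{conjecture}, and the paper does not attempt to prove it; it is offered in Section~6.1 precisely because the methods of the paper give only lower bounds, with the upper bound motivated by numerical evidence for Cartier's conjecture. Your proposal correctly recognises this: you identify that the attainment claim is handled by the Corollary following Theorem~\ref{Qratiothm}, and that the upper bound would require a rigidity statement which, already at $q=1$, is Cartier's conjecture on the simplicity of the cuspidal spectrum of $\SL_2(\Z)\backslash\H$. This is exactly the paper's own position, so your assessment is in full agreement with it.

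One small imprecision worth flagging: you assert that twisting any fixed $\pi_i$ by the characters $\e_{\Quad(q^*)}$ for $q^*\mid q'$ yields an orbit of exactly $2^{s(q)}$ representations all lying in $\Xx\left(\Gamma_0(q),0,s_\infty\right)$. This is not guaranteed for an arbitrary $\pi_i$ of conductor $q$: by Theorem~\ref{twistthmadelic}, at primes $p$ with $p^2\parallel q$ the twist by $\e_{\Quad(p)}$ may drop the conductor (when the local component is $\beta_{v,\Quad}\boxplus\beta_{v,\Quad}$ or $\beta_{v,\Quad}\St_v$), and the orbit may also collapse if $\pi_i$ is monomial. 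Neither issue damages your overall analysis---a smaller orbit is still consistent with the upper bound $2^{s(q)}$, and the monomial forms have density zero---but the sentence as written overstates what Theorem~\ref{twistthmadelic} delivers.
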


When $q$ is even, the situation is more complicated, though we point out that the cuspidal spectrum of the Laplacian on $\Gamma_0(q) \backslash \Hb$ with $q$ even has been studied via the Selberg trace formula by Golovchanski\u{\i} and Smotrov \cite{Golovchanskii}. Throughout this paper we have stipulated that all quadratic Dirichlet characters have odd conductor. Classifying the level of twists of newforms by quadratic characters of even conductor has been considered by Atkin and Lehner \cite[Theorem 7]{Atkin1} for holomorphic newforms, with the chief challenge being that there is no quadratic character modulo $2$, one quadratic character modulo $4$, and two primitive quadratic characters modulo $8$. It is likely that, at least for the case $F = \Q$, the methods of this paper could be used to extend results on spectral multiplicity to even values of $q$. More generally, one can undoubtedly classify the generic irreducible admissible representations of $\GL_2(F_v)$ with $v$ dyadic whose conductors are invariant under twisting by a given ramified quadratic character of $F_v^{\times}$ via the same methods as in \hyperref[backgroundsect]{Section \ref*{backgroundsect}}, though additional work would be required due to the fact that such ramified quadratic characters can have conductor exponent $2$, not just $1$. On the other hand, more work would be required in bounding the contribution of monomial representations, since the proof of \hyperref[latticelemma]{Lemma \ref*{latticelemma}} seems to rely on the fact that the twisting character $\omega_{\rr}$ is ramified at a nondyadic nonarchimedean place; note that for $F = \Q$, however, there are simpler ways to show that monomial representations contribute a negligible amount.

\subsection{Multiplicity of Hecke Eigenvalues and Distinguishing Newforms}

The results in this paper can be used not only to prove multiplicity of Laplacian eigenvalues but also to prove the nonarchimedean analogue, namely multiplicity of Hecke eigenvalues. More precisely, we have the following.

\begin{proposition}
Let $q$ be a positive integer, and let $p$ be a fixed prime not dividing $q$. Let $m_p(q)$ denote the number of odd squarefree divisors $r' > 1$ of $q$ such that ${r'}^2$ divides $q$ and that $\chi_{r'}(p) = 1$, where $\chi_{r'}$ denotes the unique primitive quadratic character modulo $r'$. Then as $T$ tends to infinity, a positive proportion of the Hecke eigenvalues $\lambda_f(p)$ of Maa\ss{} newforms $f \in \BB_0^{\ast}\left(\Gamma_0(q)\right)$ with Laplacian eigenvalue $\lambda_f \leq T^2$ have multiplicity at least $m_p(q) + 1$.
\end{proposition}

Indeed, \hyperref[Qratiothm]{Theorem \ref*{Qratiothm}} shows that if $q$ is not squarefree, then for each odd squarefree integer $r$ such that $r^2$ divides $q$ there exists a positive proportion of Maa\ss{} newforms $f,g$ of weight $0$, level $q$, and principal character that are not equal yet satisfy $f \otimes \chi_r = g$, where $\chi_r$ is the unique primitive quadratic character modulo $r$. Consequently, these newforms satisfy $\lambda_f(p) = \lambda_g(p)$ for every prime $p$ not dividing $q$ for which $\chi_r(p) = 1$.

Note that when $p$ divides $q$, the Hecke eigenvalues $\lambda_f(p)$ of a Maa\ss{} newform $f \in \BB_0^{\ast}\left(\Gamma_0(q)\right)$ are highly prescribed; we have that $\lambda_f(p) \in \left\{p^{-1/2}, -p^{-1/2}\right\}$ if $p \parallel q$, while $\lambda_f(p) = 0$ if $p^2 \mid q$. Consequently, one can show the following.

\begin{proposition}
Let $q$ be a positive odd non-squarefree integer. There exist distinct newforms $f,g \in \BB_0^{\ast}\left(\Gamma_0(q)\right)$ such that $\lambda_f(p) = \lambda_g(p)$ for every $p < n_0(q)$, where
\[n_0(q) = \max_{\substack{r' \mid r \\ r' > 1}} \left\{\min\left\{p : \chi_{r'}(p) = -1,\ p^2 \nmid q\right\}\right\},\]
with $r$ the largest squarefree integer such that $r^2$ divides $q$.
\end{proposition}

Via the work of Graham and Ringrose \cite[Theorem 1]{Graham} on the least quadratic nonresidue modulo a prime $p$, there exist infinitely many primes $p$ such that for $q = p^2$,
\[n_0(q) \gg \log q \log \log \log q.\]

A natural question to ask is whether this is the longest string of initial Hecke eigenvalues that two distinct newforms can share. To rephrase, what is the smallest number $n_0(q)$ such that if two newforms $f,g \in \BB_0^{\ast}\left(\Gamma_0(q)\right)$ have equal Hecke eigenvalues $\lambda_f(p) = \lambda_g(p)$ for every prime $p \leq n_0(q)$, then $f = g$? We conjecture that, excluding Maa\ss{} newforms with Laplacian eigenvalue $1/4$, twisting by quadratic characters provides the only obstruction to distinguishing Maa\ss{} newforms by their Hecke eigenvalues; this is analogous to \cite[Conjecture 2]{Rajan}, in which a conjecture on distinguishing newforms by their Hecke eigenvalues at a positive density of primes is posed.

\begin{conjecture}
Let $q$ be a positive odd non-squarefree integer, and let $n_0(q)$ denote the smallest number such that if two newforms $f,g \in \BB_0^{\ast}\left(\Gamma_0(q)\right)$ with Laplacian eigenvalues greater than $1/4$ have equal Hecke eigenvalues $\lambda_f(p) = \lambda_g(p)$ for every prime $p \leq n_0(q)$, then $f = g$. Then
\[n_0(q) = \max_{\substack{r' \mid r \\ r' > 1}} \left\{\min\left\{p : \chi_{r'}(p) = -1,\ p^2 \nmid q\right\}\right\},\]
where $r$ is the largest squarefree integer such that $r^2$ divides $q$.
\end{conjecture}

These questions are perhaps more naturally posed for holomorphic newforms. To this end, we let $\BB_k^{\ast}\left(\Gamma_0(q)\right)$ denote the set of holomorphic newforms of even weight $k \geq 2$, level $q \geq 1$, and principal character, and let $\BB_k^{\ast}\left(\Gamma_0(q)\right)_{\nonmon(\chi_r)}$ denote the subset of $\BB_k^{\ast}\left(\Gamma_0(q)\right)$ consisting of newforms whose twist by a quadratic character $\chi_r$ is a different holomorphic newform of the same weight, level, and character. The methods developed in this paper, with some modifications, can be used to show the following.

\begin{theorem}
Let $q$ and $r$ be positive integers with $r > 1$ odd and squarefree. Let $\chi_r$ denote the unique primitive quadratic character modulo $r$. Then if $r^2$ does not divide $q$,
\[\BB_k^{\ast}\left(\Gamma_0(q)\right)_{\nonmon(\chi_r)} = \emptyset,\]
whereas if $r^2$ divides $q$, we have that
\[\frac{\# \BB_k^{\ast}\left(\Gamma_0(q)\right)_{\nonmon(\chi_r)}}{\# \BB_k^{\ast}\left(\Gamma_0(q)\right)} = \prod_{\substack{p \mid r \\ p^2 \parallel q}} \left(1 - \frac{p}{p^2 - p - 1}\right) + o_q(1)\]
as $k$ tends to infinity over the even integers, where the error term depends only on $q$.

Moreover, the same holds if we replace $\BB_k^{\ast}\left(\Gamma_0(q)\right)_{\nonmon(\chi_r)}$ by
\[\bigcap_{\substack{r' \mid r \\ r' > 1}} \BB_k^{\ast}\left(\Gamma_0(q)\right)_{\nonmon(\chi_{r'})}.\]
\end{theorem}

Consequently, we can ask the same questions for holomorphic newforms with essentially the same answers.

\begin{proposition}\label{antiMaeda}
Let $q$ be a positive integer, and let $p$ be a fixed prime not dividing $q$. Let $m(q)$ denote the number of odd squarefree divisors $r' > 1$ of $q$ such that ${r'}^2$ divides $q$ and that $\chi_{r'}(p) = 1$, where $\chi_{r'}$ denotes the unique primitive quadratic character modulo $r'$. Then as $k$ tends to infinity over the even integers, a positive proportion of the Hecke eigenvalues $\lambda_f(p)$ of holomorphic newforms $f \in \BB_k^{\ast}\left(\Gamma_0(q)\right)$ have multiplicity at least $m(q) + 1$.
\end{proposition}

This is of particular interest due to its relation to Maeda's conjecture, which states that for every positive even integer $k$ and every prime $p$, the characteristic polynomial of the Hecke operator $T_p$ acting on the space $\Sc_k\left(\SL_2(\Z)\right)$ of holomorphic cusp forms of weight $k$ and level $1$ is irreducible over $\Q$ and the Galois group of its splitting field is the full symmetric group of order $\dim \Sc_k\left(\SL_2(\Z)\right)!$. In particular, Maeda's conjecture implies that the Hecke eigenvalues $\lambda_f(p)$ of holomorphic newforms $f \in \BB_k\left(\SL_2(\Z)\right)$ of weight $k$ and level $1$ are always simple.

For holomorphic newforms of level $q > 1$, it is known that Maeda's conjecture is false; already for $k = 2$, $q = 23$, $p = 13$, the characteristic polynomial of the Hecke operator $T_p$ acting on $\Sc_k^{\ast}\left(\Gamma_0(q)\right)$ is $(x - 3)^2$. \hyperref[antiMaeda]{Proposition \ref*{antiMaeda}} quantifies this by showing that the characteristic polynomial of a Hecke operator is often not separable in the non-squarefree level setting.

We may also consider the problem of distinguishing holomorphic newforms by their Hecke eigenvalues, an issue recently studied by Chow and Ghitza \cite{Chow}. For many small positive squarefree integers $q$ and positive even integers $k$, they numerically calculate the least integer $n_0(q,k)$ such that if $f$ and $g$ are holomorphic newforms of weight $k$, level $q$, and principal character with equal Hecke eigenvalues $\lambda_f(p) = \lambda_g(p)$ for all $p \leq n_0(q,k)$, then $f = g$. Again, if $p \parallel q$, then $\lambda_f(p) \in \left\{p^{k/2 - 1}, -p^{k/2 - 1}\right\}$, while if $p^2 \mid q$, then $\lambda_f(p) = 0$, so we should not expect to be able to distinguish newforms by their Hecke eigenvalues at primes dividing the level (though note that Atkin--Lehner operators determine the sign of $\lambda_f(p)$ when $p \parallel q$). Based on their calculations, Chow and Ghitza suggest that for sufficiently large $k$ independent of the level $q$, the issue of Hecke eigenvalues at primes dividing the level is the only obstruction for squarefree levels.

\begin{conjecture}[Stability Conjecture for Holomorphic Newforms {\cite[Conjecture 4.1]{Chow}}]
Let $q$ be a positive squarefree integer and let $k$ be a positive even integer. Then there exists a positive integer $k_0$ such that for all $k \geq k_0$, $n_0(q,k)$ is equal to the smallest prime not dividing $q$.
\end{conjecture}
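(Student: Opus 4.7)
The plan is to reduce the statement to a sharp spectral simplicity assertion for a single Hecke operator. Since holomorphic newforms of squarefree level $q$ are simultaneous eigenfunctions of all Hecke operators $T_\ell$ with $\ell \nmid q$, and since for primes $\ell$ dividing $q$ the eigenvalue $\lambda_f(\ell)$ is forced by Atkin--Lehner theory to take one of the two values $\pm 1/\sqrt{\ell}$, the hypothesis $\lambda_f(\ell) = \lambda_g(\ell)$ for all primes $\ell$ up to $n_0(q,k)$ splits the newspace according to the Atkin--Lehner signs at primes $\ell < p_0$, where $p_0$ denotes the smallest prime not dividing $q$, and then demands that the single operator $T_{p_0}$ separate newforms within each resulting sign eigenspace. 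The conjecture thus becomes equivalent to the simplicity of the spectrum of $T_{p_0}$ on each Atkin--Lehner eigenspace of the newspace of weight $k$ and level $q$, for all $k \geq k_0$.

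First I would analyse the characteristic polynomial of $T_{p_0}$ restricted to each such eigenspace; this polynomial has rational integer coefficients and conjecturally factors into irreducible pieces indexed by the $\Gal\left(\overline{\Q}/\Q\right)$-orbits of newforms in the eigenspace. A suitably refined Maeda-type conjecture for level $q$ would assert precisely this irreducibility, from which distinctness of eigenvalues within each Galois orbit would be immediate. The harder task is to establish that eigenvalues arising from distinct Galois orbits are also distinct for all $k$ large enough. This could be approached either through the $\ell$-adic Galois representations attached to newforms---two newforms sharing $\lambda(p_0)$ have the same trace of Frobenius at $p_0$, and one might hope to leverage the finiteness of residual representations of bounded conductor---or via the Rankin--Selberg $L$-function $L(s, f \times \bar{g})$, whose pole at $s = 1$ characterises equality, together with effective analytic bounds on its conductor and Dirichlet coefficients.

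The main obstacle will be the sharpness of the bound. The dimension of the newspace grows linearly in $k$, whereas by Deligne's bound each $\lambda_f(p_0)$ lies in the fixed interval $[-2,2]$, so the average spacing between eigenvalues shrinks like $1/k$. Demonstrating genuine separation of the corresponding algebraic integers for all sufficiently large $k$ therefore goes well beyond the usual forms of effective strong multiplicity one, which typically require matching of $\lambda_f(\ell)$ for $\ell$ up to a polynomial in the analytic conductor of $f \times \bar{g}$. I expect a complete proof to demand novel input combining the diophantine structure of Hecke eigenvalues with Sato--Tate equidistribution and the algebraic theory of the Hecke algebra; with current techniques, perhaps only a density-one or otherwise weakened version of the conjecture looks truly accessible.
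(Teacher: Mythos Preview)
The statement you are addressing is a \emph{conjecture}, not a theorem: the paper merely quotes it from Chow and Ghitza and offers no proof, so there is no argument in the paper to compare your proposal against. Your write-up is not a proof either, and you concede as much in your final paragraph.

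What you have produced is a reasonable research outline rather than a demonstration. The reduction to simplicity of the spectrum of $T_{p_0}$ on each Atkin--Lehner eigenspace of $\SS_k^{\new}\left(\Gamma_0(q)\right)$ is correct, and your identification of the central obstruction---that Deligne's bound confines every $\lambda_f(p_0)$ to the fixed interval $[-2,2]$ while the number of newforms grows linearly in $k$, so the required separation is far finer than any known effective multiplicity-one theorem delivers---is precisely why the conjecture remains open. But nothing in your outline closes that gap: the Maeda-type irreducibility you invoke is itself conjectural, the $\ell$-adic and Rankin--Selberg approaches you sketch give bounds on the number of primes needed that are polynomial in the analytic conductor rather than the constant $p_0$, and you do not indicate any mechanism for bridging that discrepancy. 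In short, the paper does not prove this statement, and neither does your proposal.
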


When $q$ is not squarefree, their numerical evidence suggests that this conjecture must be altered; in particular, for $q = 49$ it seems that $n_0(q,k) = 3$ for all even $k \geq 4$. This is due to the presence of distinct newforms $f,g$ of level $49$ for which $f \otimes \chi_7 = g$; as $\chi_7(2) = 1$, these newforms satisfy $\lambda_f(2) = \lambda_g(2)$. We propose the following modification to this conjecture for non-squarefree levels.

\begin{conjecture}
Let $q$ be a positive odd non-squarefree integer and let $k$ be a positive even integer. Then there exists a positive integer $k_0$ such that for all $k \geq k_0$,
\[n_0(q,k) = \max_{\substack{r' \mid r \\ r' > 1}} \left\{\min\left\{p : \chi_{r'}(p) = -1,\ p^2 \nmid q\right\}\right\},\]
where $r$ is the largest squarefree integer such that $r^2$ divides $q$.
\end{conjecture}

This conjecture agrees with the data in \cite[Appendix]{Chow} --- for example, that $n_0(225,k) = 7$ for $k \in \{2,4,6,8,10,12\}$ --- with the caveat that this data is limited to small values of $q$ and $k$.

\subsection{Twists of \texorpdfstring{Maa\ss{}}{Maa\80\337} Forms on \texorpdfstring{$\Gamma_1(q) \backslash \Hb$}{\textGamma\9040\201(q)\textbackslash H} and \texorpdfstring{$\Gamma(q) \backslash \Hb$}{\textGamma(q)\textbackslash H}}\label{TwistsGamma1(q)Gamma(q)sect}

In this article, we classified nonmonomial Maa\ss{} newforms of weight $0$ and principal character that are level- and character-invariant under twisting by a Dirichlet character, with such characters necessarily being quadratic. A related question is to relax the character invariance and classify the larger family of weight $\kappa$ Maa\ss{} newforms, possibly having nonprincipal character, that are level-invariant under twisting by some Dirichlet character; here $\kappa \in \{0,1\}$. More precisely, we let
\[\BB_{\kappa}^{\ast}\left(\Gamma_1(q)\right) \defeq \bigsqcup_{\substack{\chi \hspace{-.15cm} \pmod{q} \\ \chi(-1) = (-1)^{\kappa}}} \BB_{\kappa}^{\ast}(q,\chi)\]
denote the set of weight $\kappa$ Maa\ss{} newforms on $\Gamma_1(q) \backslash \Hb$, once again Hecke-normalised. We ask when two distinct newforms $f,g \in \BB_{\kappa}^{\ast}\left(\Gamma_1(q)\right)$ are related by $f \otimes \chi = g$ for some Dirichlet character $\chi$.

Classifying these newforms would lead to a better understanding of the new part of the cuspidal spectrum of the weight $\kappa$ Laplacian
\[\Delta_{\kappa} = -y \left(\frac{\dee^2}{\dee x^2} + \frac{\dee^2}{\dee y^2}\right) + i\kappa y \frac{\dee}{\dee x}\]
on $\Gamma_1(q) \backslash \Hb$, which of course contains the new part of the cuspidal spectrum of the weight $\kappa$ Laplacian on $\Gamma_0(q) \backslash \Hb$. The chief difference in this setting is that spectral multiplicity occurs already for squarefree values of $q$, as was first observed by Booker and Str\"{o}mbergsson \cite[Section 3.4]{Booker}. Indeed, if $q \notin \{1,2,3,5,6,10\}$ is squarefree, then there exists an even Dirichlet character $\chi$ modulo $q$ satisfying $\chi \neq \overline{\chi}$, and we have that
\[\BB_0^{\ast}\left(q,\chi;\lambda\right) \otimes \overline{\chi} = \BB_0^{\ast}\left(q,\overline{\chi};\lambda\right).\]
Similarly, if $q \notin \{1,2,3,6\}$ is squarefree, then there exists an odd Dirichlet character $\chi$ modulo $q$ satisfying $\chi \neq \overline{\chi}$, and we have that
\[\BB_1^{\ast}\left(q,\chi;\lambda\right) \otimes \overline{\chi} = \BB_1^{\ast}\left(q,\overline{\chi};\lambda\right).\]
Another notable difference to $\Gamma_0(q) \backslash \Hb$ is that if $q = p^m$ is a power of an odd prime, then the largest possible multiplicity of an eigenvalue can be shown to grow as the power $m$ grows.

\begin{proposition}
Let $q = p^m$ be a power of an odd prime $p$ with $m \geq 4$. Then for $\kappa \in \{0,1\}$, the new part of the cuspidal spectrum of the weight $\kappa$ Laplacian on $\Gamma_1(q) \backslash \Hb$ contains eigenvalues of multiplicity at least $p^{\lfloor m / 2 \rfloor - 2}(p - 1)^2$.
\end{proposition}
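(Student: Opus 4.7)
The plan is to exhibit a single nonmonomial Maa\ss{} newform $\varphi \in \MM^{\new}\left(\Gamma_1(p^m)\right)$ together with a family of $p^{\lfloor m/2 \rfloor - 2}(p-1)^2$ distinct Dirichlet characters $\e'$ modulo $p^m$ for which each twist $\varphi \otimes \e'$ is again a newform of level exactly $p^m$. Since twisting preserves the Laplacian eigenvalue and distinct $\e'$ yield distinct twists whenever $\varphi$ is nonmonomial (otherwise $\varphi$ would be fixed by $\e_1' \overline{\e_2'}$), this will give the claimed multiplicity. By the bijective correspondence between newforms and cuspidal automorphic representations and the observation (cf.\ Section~\ref{backgroundsect}) that twisting by a Dirichlet character of $p$-power conductor preserves the local conductor exponent at every prime $\ell \neq p$ as well as the archimedean weight and spectral parameter, everything reduces to a local count at $p$: given the local component $\pi_p$ of the cuspidal automorphic representation attached to $\varphi$, one must count the characters $\omega_p'$ of $\Z_p^\times$ with $c\left(\pi_p \otimes \omega_p'\right) = m$.

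I would take $\pi_p$ to be a ramified irreducible principal series $\omega_{p,1} \boxplus \omega_{p,2}$ of as balanced a conductor as possible. For $m = 2k$, set $c\left(\omega_{p,1}\right) = c\left(\omega_{p,2}\right) = k$ subject to the additional tuning that the restrictions of $\omega_{p,1}$ and $\omega_{p,2}$ to the top filtration quotient $\left(1 + p^{k-1}\Z_p\right)/\left(1 + p^k\Z_p\right) \cong \F_p$ coincide; this is achievable, for instance, by taking $\omega_{p,2} = \omega_{p,1} \cdot \psi$ for a nontrivial character $\psi$ of conductor at most $p^{k-1}$, and $\psi \neq 1$ also ensures irreducibility. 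For $m = 2k+1$, set $c\left(\omega_{p,1}\right) = k+1$ and $c\left(\omega_{p,2}\right) = k$. A case analysis on $j \defeq c\left(\omega_p'\right)$ and on the $\F_p$-leading term of $\omega_p'$ at each filtration level then shows: all $\phi\left(p^{k-1}\right) = p^{k-2}(p-1)$ characters with $j \leq k-1$ automatically preserve the conductor; among the $p^{k-2}(p-1)^2$ characters of conductor exactly $p^k$, the tuning forces exactly two of the $p-1$ possible nonzero leading terms to be forbidden, contributing $(p-2) \cdot p^{k-2}(p-1)$ further admissible characters; and larger $j$ always drives the twist conductor above $m$. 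Summing yields
\[
p^{k-2}(p-1) \cdot \left[1 + (p-2)\right] = p^{\lfloor m/2 \rfloor - 2}(p-1)^2
\]
admissible local twists in either parity.

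For the global existence of such a $\varphi$, I apply Palm's Weyl law (Proposition~\ref{Weyllaw}) to the similarity class whose component at $p$ is the $\pi_p$ above and whose archimedean component is a principal series of bounded spectral parameter: this class is nonempty and has positive Weyl-law density. The monomial cuspidal automorphic representations within this class form a density-zero subset by a routine adaptation of the argument behind Proposition~\ref{monomialcountprop} to arbitrary central character, so a nonmonomial $\pi$ with the prescribed $\pi_p$ exists. Each of the $p^{\lfloor m/2 \rfloor - 2}(p-1)^2$ admissible local characters $\omega_p'$ then induces a Dirichlet character $\e'$ modulo $p^m$ for which $\varphi \otimes \e'$ is a newform of level exactly $p^m$ with the same Laplacian eigenvalue as $\varphi$, completing the construction.

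The main obstacle is the local count in the even case, where the delicate point is the tuning of the pair $\left(\omega_{p,1}, \omega_{p,2}\right)$: matching the top leading terms shrinks the set of forbidden $\F_p$-leading terms at level $j = k$ from three values ($0, -a_1, -a_2$) to just two ($0, -a$), upgrading the final count from $p^{k-2}(p-1)(p-2)$ to the target $p^{k-2}(p-1)^2$. Once this combinatorial wrinkle is settled, the global argument is a mechanical combination of the Weyl law and the monomial density estimate already present in the paper.
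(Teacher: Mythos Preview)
Your approach is essentially the paper's: both construct a principal series $\pi_p$ with conductor exponents split as $\lceil m/2\rceil + \lfloor m/2\rfloor$ and count characters $\omega_p'$ of $\Z_p^\times$ with $c(\omega_p')\le \lfloor m/2\rfloor$ and $c(\omega_{p,2}\omega_p')=\lfloor m/2\rfloor$, obtaining the number of primitive characters modulo $p^{\lfloor m/2\rfloor}$, namely $p^{\lfloor m/2\rfloor-2}(p-1)^2$. The paper's sketch simply takes $\beta_{p,1}=\beta_{p,2}$ in the even case (relying on distinct unramified twists $s_1\neq s_2$ for irreducibility), whereas your tuning $\omega_{p,2}=\omega_{p,1}\psi$ with $1\neq\psi$ of conductor $\le p^{k-1}$ achieves the same leading-term coincidence while making irreducibility automatic; either choice yields the identical count via the substitution $\chi=\omega_{p,2}\omega_p'$. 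You also supply two points the paper's sketch elides: global existence of a $\pi$ with the prescribed $\pi_p$ via the Weyl law, and nonmonomiality via the density-zero estimate, which together justify that distinct $\e'$ give distinct newforms.

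Two small inaccuracies that do not affect the result: at level $j=k$ only \emph{one} nonzero leading term (namely $-a$) is actually forbidden, not two, which is why your contribution $(p-2)\cdot p^{k-2}(p-1)$ is correct; and in the odd case your assertion that $j>k$ always pushes the conductor above $m$ fails at $j=k+1$ (one can have $c(\omega_{p,1}\omega_p')=k$ there), but any such extra admissible twists only strengthen the lower bound.
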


\begin{proof}[Sketch of proof]
Take a newform for which the local component $\pi_p$ at $p$ of the associated cuspidal automorphic representation $\pi$ is a principal series representation
\[\pi_p \cong \beta_{p,1} |\cdot|_p^{s_1} \boxplus \beta_{p,2} |\cdot|_p^{s_2},\]
where $\beta_{p,1}, \beta_{p,2}$ are characters of $\Z_p^{\times}$ of conductor exponents $c(\beta_{p,1}) = \lceil m/2 \rceil$, $c(\beta_{p,2}) = \lfloor m/2 \rfloor$, and we choose $\beta_{p,2} = \beta_{p,1}$ if $m$ is even; the conductor exponent of $\pi_p$ is
\[c(\pi_p) = c(\beta_{p,1}) + c(\beta_{p,2}) = \left\lceil \frac{m}{2} \right\rceil + \left\lfloor \frac{m}{2} \right\rfloor = m.\]
For each character $\beta_p'$ of $\Z_p^{\times}$ of conductor exponent at most $\lfloor m/2 \rfloor$ satisfying $c(\beta_{p,2} {\beta_p'}) = \lfloor m/2 \rfloor$, the twist $\pi_p \otimes \beta_p'$ is a principal series representation
\[\beta_{p,1} \beta_p' |\cdot|_p^{s_1} \boxplus \beta_{p,2} \beta_p' |\cdot|_p^{s_2}\]
of conductor exponent
\[c(\beta_{p,1} \beta_p') + c(\beta_{p,2} \beta_p') = \left\lceil \frac{m}{2} \right\rceil + \left\lfloor \frac{m}{2} \right\rfloor = m,\]
as $c(\beta_p') \leq \lfloor m/2 \rfloor$ implies that $c(\beta_{p,1} \beta_p') = c(\beta_{p,1})$. It is clear that no two such twists by different characters are isomorphic. The number of possible twists is the number of characters $\beta_{p,2} \beta_p'$ of $\Z_p^{\times}$ of conductor exponent $\lfloor m / 2 \rfloor$, which is the number of primitive characters modulo $p^{\lfloor m / 2 \rfloor}$; this is $p^{\lfloor m / 2 \rfloor - 2}(p - 1)^2$ if $m \geq 4$.
\end{proof}

Finally, we may also study the larger family $\Ac_{\kappa}(\Gamma(q))$ of Maa\ss{} cusp forms of weight $\kappa \in \{0,1\}$ that are invariant under the action of the principal congruence subgroup
\[\Gamma(q) \defeq \left\{\begin{pmatrix} a & b \\ c & d \end{pmatrix} \in \SL_2(\Z) : a,d \equiv 1 \hspace{-.2cm} \pmod{q}, \ b,c \equiv 0 \hspace{-.2cm} \pmod{q}\right\}.\]
This space has the decomposition
\[\Ac_{\kappa}(\Gamma(q)) = \bigoplus_{\substack{\chi \hspace{-.25cm} \pmod{q} \\ \chi(-1) = (-1)^{\kappa}}} \iota_{q^{-1}} \Ac_{\kappa}(q^2,\chi),\]
where $\iota_{q^{-1}} f(z) \defeq f(q^{-1}z)$; see \cite[Section 3.5]{Humphries}. Thus a natural definition of the subset of newforms in $\Ac_{\kappa}(\Gamma(q))$ is the set
\[\BB_{\kappa}^{\ast}(\Gamma(q)) \defeq \bigsqcup_{\substack{\chi \hspace{-.25cm} \pmod{q} \\ \chi(-1) = (-1)^{\kappa}}} \iota_{q^{-1}} \BB_{\kappa}^{\ast}(q^2,\chi).\]

Take for simplicity $q$ to be an odd prime $p$, so that for $\chi$ of conductor $p$, the local component $\pi_p$ at $p$ of the cuspidal automorphic representation $\pi$ associated to a newform $f \in \BB_{\kappa}^{\ast}(p^2,\chi)$ is a special representation $\beta_p |\cdot|_p^s \St_p$ with $c(\beta_p) = 1$, so that $c(\pi_p) = 2$. Given a character $\beta_p'$ of $\Z_p^{\times}$ with $c(\beta_p') = 1$, we have that $\pi_p \otimes \beta_p' = \beta_p {\beta_p'}^2 |\cdot|_p^s \St_p$, which has conductor exponent $2$ unless $\beta_p = {\beta_p'}^{-2}$, in which case it has conductor exponent $1$. There are $(p - 1)/2$ characters of $\Z_p^{\times}$ of conductor exponent at most $1$ that are squares of characters $\beta_p'$ of $\Z_p^{\times}$ of conductor exponent $1$. Thus there are at least $(p - 3)/2$ Dirichlet characters $\psi$ modulo $p$ such that $\iota_{p^{-1}} f \otimes \psi \in \BB_{\kappa}^{\ast}(\Gamma(p))$, with no two twists coinciding. Noting that the Laplacian eigenvalue of $\iota_{q^{-1}} f$ is the same as that of $f$, we see that for $\kappa \in \{0,1\}$, the new part of the cuspidal spectrum of the weight $\kappa$ Laplacian on $\Gamma(p) \backslash \Hb$ contains eigenvalues of multiplicity at least $(p - 3)/2$; cf.\ \cite{Randol}.

We leave unaddressed the complete classification of local representations that are conductor-invariant under twists, though the methods we developed in this article would certainly be capable of dealing with this problem. The main new issue would be classifying supercuspidal representations that are conductor-invariant under twisting by a character that is not necessarily quadratic; this would involve significantly more work than the simpler case in \hyperref[supercuspidaltwist]{Lemma \ref*{supercuspidaltwist}}.

\section*{}
\vspace{-.6cm}

\subsection*{Acknowledgements}

The author would like to thank Will Sawin for many helpful discussions with regards to \hyperref[monomialsect]{Section \ref*{monomialsect}}, especially the proof of \hyperref[latticelemma]{Lemma \ref*{latticelemma}}, as well as the referees for many detailed comments and corrections.


\begin{thebibliography}{Hum16}

\bibitem[AL70]{Atkin1} A.\ O.\ L.\ Atkin and J.\ Lehner, ``Hecke Operators on $\Gamma_0(m)$'', \textit{Mathematische Annalen} \textbf{185}:1 (1970), 134--160. \textsc{doi}:\allowbreak\href{http://doi.org/10.1007/BF01359701}{10.1007/BF01359701}

\bibitem[AL78]{Atkin2} A.\ O.\ L.\ Atkin and Wen-Ch'ing Winnie Li, ``Twists of Newforms and Pseudo-Eigenvalues of $W$-Operators'', \textit{Inventiones Mathematicae} \textbf{48}:3 (1978), 221--243. \textsc{doi}:\allowbreak\href{http://doi.org/10.1007/BF01390245}{10.1007/BF01390245}

\bibitem[BJ99a]{Bolte1} Jens Bolte and Stefan Johansson, ``Theta-lifts of Maa\ss{} waveforms'', in \textit{Emerging Applications of Number Theory}, editors Dennis A.\ Hejhal, Joel Friedman, Martin C.\ Gutzwiller, and Andrew M.\ Odlyzko, The IMA Volumes in Mathematics and Its Applications \textbf{109}, Springer--Verlag, New York, 1999, 39--72. \textsc{doi}:\allowbreak\href{http://doi.org/10.1007/978-1-4612-1544-8_2}{10.1007/978-1-4612-1544-8\_2}

\bibitem[BJ99b]{Bolte2} J.\ Bolte and S.\ Johansson, ``A Spectral Correspondence for Maa\ss{} Waveforms'', \textit{Geometric and Functional Analysis} \textbf{9} (1999), 1128--1155. \textsc{doi}:\allowbreak\href{http://doi.org/10.1007/s000390050109}{10.1007/s000390050109}

\bibitem[BS07]{Booker} Andrew R.\ Booker and Andreas Str\"{o}mbersson, ``Numerical Computations with the Trace Formula and the Selberg Eigenvalue Conjecture'', \textit{Journal f\"{u}r die reine und angewandte Mathematik} \textbf{607} (2007), 113--161. \textsc{doi}:\allowbreak\href{http://doi.org/10.1515/CRELLE.2007.047}{10.1515/CRELLE.2007.047}

\bibitem[Bum97]{Bump} Daniel Bump, \textit{Automorphic Forms and Representations}, Cambridge Studies in Advanced Mathematics \textbf{55}, Cambridge University Press, Cambridge, 1997. \textsc{doi}:\allowbreak\href{http://doi.org/10.1017/CBO9780511609572}{10.1017/CBO9780511609572}

\bibitem[Car71]{Cartier} P.\ Cartier, ``Some Numerical Computations Relating to Automorphic Functions'', in \textit{Computers in Number Theory}, editors A.\ O.\ L.\ Atkin and B.\ J.\ Birch, Academic Press, London, 1971, 37--48.

\bibitem[Cas73]{Casselman} William Casselman, ``On Some Results of Atkin and Lehner'', \textit{Mathematische Annalen} \textbf{201}:4 (1973), 301--314. \textsc{doi}:\allowbreak\href{http://doi.org/10.1007/BF01428197}{10.1007/BF01428197}

\bibitem[CG15]{Chow} Sam Chow and Alexandru Ghitza, ``Distinguishing Newforms'', \textit{International Journal of Number Theory} \textbf{11}:3 (2015), 893--908. \textsc{doi}:\allowbreak\href{http://doi.org/10.1142/S1793042115500499}{10.1142/S1793042115500499}

\bibitem[Gel75]{Gelbart} Stephen S.\ Gelbart, \textit{Automorphic Forms on Adele Groups}, Princeton University Press, Princeton, New Jersey, 1975.

\bibitem[GH11]{Goldfeld} Dorian Goldfeld and Joseph Hundley, \textit{Automorphic Representations and $L$-Functions for the General Linear Group, Volumes I, II}, Cambridge Studies in Advanced Mathematics \textbf{129}, \textbf{130}, Cambridge University Press, Cambridge, 2011.

\bibitem[GS12]{Golovchanskii} V.\ V.\ Golovchanski\u{\i} and M. N. Smotrov, ``Spectrum of the Laplace--Beltrami Operator for Certain Congruence Subgroups of the Modular Group'', \textit{St.\ Petersburg Mathematical Journal} \textbf{23}:4 (2012), 659--664. \textsc{doi}:\allowbreak\href{http://doi.org/10.1090/S1061-0022-2012-01212-4}{10.1090/S1061-0022-2012-01212-4}

\bibitem[GR90]{Graham} S.\ W.\ Graham and C.\ J.\ Ringrose, ``Lower Bounds for Least Quadratic Non-Residues'', in \textit{Analytic Number Theory: Proceedings of a Conference in Honor of Paul T.\ Bateman}, editors Bruce C.\ Berndt, Harold G.\ Diamond, Heini Halberstam, and Adolf Hildebrand, Progress in Mathematics \textbf{85}, Birkh\"{a}user, Boston, 1990, 269--309. \textsc{doi}:\allowbreak\href{http://doi.org/10.1007/978-1-4612-3464-7_18}{10.1007/978-1-4612-3464-7\_18}

\bibitem[Hec27]{Hecke} E.\ Hecke, ``Zur Theorie der elliptischen Modulfunktionen'', \textit{Mathematische Annalen} \textbf{97}:1 (1927), 210--242. \textsc{doi}:\allowbreak\href{http://doi.org/10.1007/BF01447866}{10.1007/BF01447866}

\bibitem[Hum16]{Humphries} Peter Humphries, ``Density Theorems for Exceptional Eigenvalues for Congruence Subgroups'', preprint (2016), 27 pages. arXiv:\href{http://arxiv.org/abs/1609.06740}{1609.06740 [math.NT]}

\bibitem[JL70]{Jacquet} H.\ Jacquet and R.\ P.\ Langlands, \textit{Automorphic forms on $\GL(2)$}, Lecture Notes in Mathematics \textbf{114}, Springer--Verlag, Berlin, 1970. \textsc{doi}:\allowbreak\href{http://doi.org/10.1007/BFb0058988}{10.1007/BFb0058988}

\bibitem[KR14]{Knightly} Andrew Knightly and Carl Ragsdale, ``Matrix Coefficients of Depth-Zero Supercuspidal Representations of $\GL(2)$'', \textit{Involve} \textbf{7}:5 (2014), 669--690. \textsc{doi}:\allowbreak\href{http://doi.org/10.2140/involve.2014.7.669}{10.2140/involve.2014.7.669}

\bibitem[LL79]{Labesse} J-P.\ Labesse and R.\ P.\ Langlands, ``$L$-Indistinguishability for $\SL(2)$,'' \textit{Canadian Journal of Mathematics} \textbf{31}:4 (1979), 726--785. \textsc{doi}:\allowbreak\href{http://doi.org/10.4153/CJM-1979-070-3}{10.4153/CJM-1979-070-3}

\bibitem[LM13]{LMFDB} The LMFDB Collaboration, \textit{The $L$-Functions and Modular Forms Database}, \url{http://www.lmfdb.org}, 2013, [Online; accessed 9 September 2016].

\bibitem[Maa49]{Maass} Hans Maass, ``\"{U}ber eine neue Art von nichtanalytischen automorphen Funktionen und die Bestimmung Dirichletscher Reihen durch Funktionalgleichungen'', \textit{Mathematische Annalen} \textbf{121} (1949), 141--183. \textsc{doi}:\allowbreak\href{http://doi.org/10.1007/BF01329622}{10.1007/BF01329622}

\bibitem[Pal12]{Palm} Marc R.\ Palm, \textit{Explicit $\GL(2)$ Trace Formulas and Uniform, Mixed Weyl Laws}, Ph.D.\ Thesis, Universit\"{a}t G\"{o}ttingen, 2012. arXiv:\href{http://arxiv.org/abs/1212.4282}{1212.4282 [math.NT]}

\bibitem[Raj00]{Rajan} C.\ S.\ Rajan, ``Refinement of Strong Multiplicity One for Automorphic Representations of $\GL(n)$'', \textit{Proceedings of the American Mathematical Society} \textbf{128}:3 (2000), 691--700. \textsc{doi}:\allowbreak\href{http://doi.org/10.1090/S0002-9939-99-05616-6}{10.1090/S0002-9939-99-05616-6}

\bibitem[Ran81]{Randol} Burton Randol, ``A Remark on the Multiplicity of the Discrete Spectrum of Congruence Groups'', \textit{Proceedings of the American Mathematical Society} \textbf{81}:2 (1981), 339--340. \textsc{doi}:\allowbreak\href{http://doi.org/10.1090/S0002-9939-1981-0593486-1}{10.1090/S0002-9939-1981-0593486-1}

\bibitem[Sar03]{Sarnak} Peter Sarnak, ``Spectra of Hyperbolic Surfaces'', \textit{Bulletin of the American Mathematical Society} \textbf{40}:4 (2003), 441--478. \textsc{doi}:\allowbreak\href{http://doi.org/10.1090/S0273-0979-03-00991-1}{10.1090/S0273-0979-03-00991-1}

\bibitem[Sch02]{Schmidt} Ralf Schmidt, ``Some Remarks on Local Newforms for $\GL(2)$'', \textit{Journal of the Ramanujan Mathematical Society} \textbf{17} (2002), 115--147. \url{http://www.math.ou.edu/~rschmidt/papers/gl2.pdf}

\bibitem[Ser79]{Serre} Jean-Pierre Serre, \textit{Local Fields}, Graduate Texts in Mathematics \textbf{67}, Springer, New York, 1979. \textsc{doi}:\allowbreak\href{http://doi.org/10.1007/978-1-4757-5673-9}{10.1007/978-1-4757-5673-9}

\bibitem[ST69]{Shalika} Shalika, J.\ A.\ and S.\ Tanaka. ``On an Explicit Construction of a Certain Class of Automorphic Forms'', \textit{American Journal of Math} \textbf{91}:4 (1969), 1049--1076. \textsc{doi}:\allowbreak\href{http://doi.org/10.2307/2373316}{10.2307/2373316}

\bibitem[Ste94]{Steil} G.\ Steil, ``Eigenvalues of the Laplacian and of the Hecke Operators for $\PSL(2,\Z)$'', Technical Report DESY 94-027, DESY, Hamburg (1994), 25 pages. \url{http://cds.cern.ch/record/260472/files/P00022028.pdf}

\bibitem[Str12]{Stromberg} Fredrik Str\"{o}mberg, ``Newforms and Spectral Multiplicity on $\Gamma_0(9)$'', \textit{Proceedings of the London Mathematical Society} \textbf{105}:3 (2012), 281--310. \textsc{doi}:\allowbreak\href{http://doi.org/10.1112/plms/pds004}{10.1112/plms/pds004}

\bibitem[Str01]{Strombergsson} Andreas Str\"{o}mbergsson, ``Some Remarks on a Spectral Correspondence for Maass Waveforms'', \textit{International Mathematics Research Notices} \textbf{2001}:10 (2001), 505--517. \textsc{doi}:\allowbreak\href{http://doi.org/10.1155/S1073792801000265}{10.1155/S1073792801000265}

\end{thebibliography}
\end{document}